\numberwithin{equation}{section}
\theoremstyle{plain}
	\newtheorem{cor}{Corollary}[section]
	\newtheorem{prop}{Proposition}[section]
\theoremstyle{definition}
\theoremstyle{remark}
	\newtheorem{rem}{Remark}[section]
\newcommand{\MW}[6]{
W^{#1}\left(
\renewcommand{\arraystretch}{0.8}
\begin{array}{cccccccc}#2\end{array}
\renewcommand{\arraystretch}{1.0}
\Big|\,{#3};{#4};{#5}; {#6} \right)
}
\newcommand{\MMW}[4]{
W^{#1}\left(
\renewcommand{\arraystretch}{0.8}
\begin{array}{cccccccc}#2\end{array}
\renewcommand{\arraystretch}{1.0}
\Big|\,{#3};{#4}; \right. 
}
\begin{document}
\title{\bf Multiple basic hypergeometric transformation formulas arising from the 
balanced duality transformation}
\author{Yasushi KAJIHARA}
\date{}
\maketitle

\begin{abstract}
Some multiple hypergeometric transformation 
formulas arising from the balanced duality transformation formula are discussed 
through the symmetry. 
Derivations of some transformation formulas with different dimensions 
are given by taking certain limits of the balanced duality transformation.
By combining some of them, some transformation formulas for $A_n$ 
basic hypergeometric series is given. They include some generalizations 
of Watson, Sears and ${}_8 W_7$ transformations.
\end{abstract}





\begin{center}
{Keywords: \ basic hypergeometric series, multivariate basic hypergeometric series}
\end{center}

{\allowdisplaybreaks

\section{Introduction}

\medskip
This paper can be considered as a continuation of our paper \cite{Kaji1}.
Namely, we discuss some multiple hypergeometric transformation 
formulas arising from the balanced duality transformation formula  
through the symmetry in this paper. 
We obtain  some transformation formulas with different dimensions 
 by taking certain limits of the balanced duality transformation.
By combining some of them, we give some transformation formulas for $A_n$ 
basic hypergeometric series. They include some generalizations 
of Watson, Sears and nonterminating and terminating ${}_8 W_7$ transformations.

The hypergeometric series ${}_{r+1} F_r$ is defined by 
\begin{equation}
{}_{r+1} F_r 
\left[ \
\begin{matrix}
a_0, & a_1, & a_2, & \cdots, & a_r \\
 & b_1, & b_2, & \cdots, & b_r \\  
\end{matrix}
\ \ ; z \
\right]
:= 
\sum_{k \in \Bbb N}
\frac
{[a_0, a_1, \cdots, a_r]_k}
{ k ! \ [b_1, \cdots, b_r]_k} \ 
z^k, 
\end{equation}
where $[c]_k = c (c+1) \cdots (c+ k -1)$ is Pochhammer symbol and
$[d_1, \cdots , d_r]_k = [d_1]_k \cdots [d_r]_k$.

The very well-poised hypergeometric series have nice properties such as the existence of 
various kinds of summation and transformation formulas which contains more parameters 
than other hypergeometric series and they have reciprocal structure (For precise see 
an excellent exposition by G.E.~Andrews \cite{AWP}).
 In \cite{B1}, W.N.Bailey derived the following transformation formula for terminating
balanced and very well-poised ${}_9 F_8$ series which is nowadays called the Bailey transformation 
formula:

\begin{eqnarray}\label{CBaileyT1}
&&
{}_{9} F_{8}
\left[ \ 
\begin{matrix}
a, & a/2 +1,& b, & c, & d,  \\ 
& a/2,  & 1 + a -b, & 1 + a-c, & 1 + a -d,  
\end{matrix}
\right.
\\ 
&&
\quad \quad \quad \quad \quad \quad \quad \quad
\quad \quad \quad \quad
\left.
\begin{matrix}
e, & f, & g, & -N \\
1 + a -e, & 1 +a - f, & 1 + a - g, & 1+ a +N 
\end{matrix} \ \ ; \ 1 \
\right] 
\nonumber
\\
&=&
\frac
{[1+ a, 1 + \lambda - e, 
1 + \lambda - f, 1 + \lambda - g]_N}
{[1+ \lambda, 1 + a - e, 
1 + a - f, 1 + a - g]_N}
\nonumber \\ 
& \times &
{}_{9} F_{8}
\left[ \ 
\begin{matrix}
\lambda, & \lambda/2 +1,&  \lambda + b - a, & \lambda + c - a, & \lambda +d -a,  \\ 
& \lambda /2,  & 1 + a -b, & 1 + a-c, & 1 + a -d,  
\end{matrix}
\right.
\nonumber 
\\ 
&&
\quad \quad \quad \quad \quad \quad \quad \quad
\quad \quad \quad \quad
\left.
\begin{matrix}
e, & f, & g, & -N \\
1 + \lambda -e, & 1 + \lambda - f, & 1 + \lambda - g, & 1+ \lambda +N 
\end{matrix} \ \ ; \ 1 \
\right], 
\nonumber 
\end{eqnarray} 
where $ \lambda = 1 + 2a - b- c- d$, and the parameters are subject to the restriction
(called as balancing condition) that
\begin{equation}\label{cbc}
2 + 3 a = b + c + d + e + f + g -N.  
\end{equation} 

 Among various hypergeometric transformation formulas, the Bailey 
transformation itself \eqref{CBaileyT1} and its special and limiting cases, including 
their basic and elliptic analogues, have many significant applications 
in various branches of mathematics and mathematical physics.
 
 On the other hand, Holman, Biedenharn and Louck \cite{H1}, \cite {HBL}
introduced a class of multiple generalization of hypergeometric series
in need of the explicit expressions of the Clebsch-Gordan 
coefficients of the tensor product of certain irreducible 
representations of the Lie group $SU(n+1)$. 
In the series of papers, S.Milne \cite{MilneG} introduced its basic analogue and 
investigated further. In the course of works (see the expository paper by Milne 
\cite{MilneNagoya} and references therein),
 he and his collaborators succeeded to obtain some multiple hypergeometric transformation 
and summation formulas by using a certain rational function identity
which is nowadays referred as Milne`s fundamental lemma. After that, 
many methods of the derivation of multiple hypergeometric identities 
have been worked out such as ingenious uses of certain matrix techniques
by Krattenthaler, Schlosser, Milne, Lilly and Newcomb, see 
\cite{BS}, \cite{Milne3},  \cite{LM1}, \cite{MilNew1}. 

 In the previous work \cite{Kaji1}, we derived a certain generalization
of the Euler transformation formula for multiple (basic) hypergeometric
series with different dimensions by using the techniques in the 
theory of Macdonald polynomials and Macdonald`s $q$-difference operators. 
By interpreting our multiple Euler transformation formula as 
the generating series, we further obtained several types of multiple 
hypergeometric summations and transformations. 

Among these, 
we consider the {\it (balanced)  duality transformation formula } \eqref{BDT1} which 
generalize the following ${}_9 F_8$ transformation 
(see Bailey`s book \cite{BB}):
\begin{eqnarray}\label{Cmn1BDT1}
&&
{}_{9} F_{8}
\left[ \ 
\begin{matrix}
a, & a/2 +1,& b, & c, & d,  \\ 
& a/2,  & 1 + a -b, & 1 + a-c, & 1 + a -d,  
\end{matrix}
\right.
\\ 
&&
\quad \quad \quad \quad \quad \quad \quad \quad
\quad \quad \quad \quad
\left.
\begin{matrix}
e, & f, & g, & -N \\
1 + a -e, & 1 +a - f, & 1 + a - g, & 1+ a +N 
\end{matrix} \ \ ; \ 1 \
\right] 
\nonumber
\\
&=&
\frac
{[1+ a, 1 + a - b-c, 1 + a -b-d, 1+a -b -e, 1 + a - b -f, g]_N}
{[1+ a- b, 1 + a -c, 1 + a -d, 1 + a - e, 
1 + a - f, g- b]_N}
\nonumber \\ 
& \times &
{}_{9} F_{8}
\left[ \ 
\begin{matrix}
 b - g - N, & (b - g - N)/2 +1,&  b, & 1 + a - c- g, & 1 + a -d -g,  \\ 
& (b - g -N)/2,  & 1 - g - N, &  b + c - a- N, & b + d - a- N,  
\end{matrix}
\right.
\nonumber 
\\ 
&&
\quad \quad \quad \quad \quad \quad \quad \quad
\quad \quad 
\left.
\begin{matrix}
1 + a - e- g, &  1+ a - f - g, & b - a -N, & -N \\
b + e - a - N, & b + f - a -N, & 1 + a - g, & 1+ b - g 
\end{matrix} \ \ ; \ 1 \
\right], 
\nonumber 
\end{eqnarray} 
with the balancing condition \eqref{cbc}, as the formula of particular importance.
\eqref{Cmn1BDT1} is different from \eqref{CBaileyT1}, but \eqref{Cmn1BDT1} can be obtained 
by duplicating \eqref{CBaileyT1}. 
In the joint work with M.Noumi \cite{KajiNou}, we derived an 
analogous formula ((3.17) of \cite{KajiNou}) for elliptic hypergeometric series introduced by
Frenkel and Turaev \cite{FT} by starting from the Frobenius determinant and
proposed the notion of { (balanced) duality transformation}
there.

 In this paper, we present an alternative approach by 
 starting from the balanced duality transformation formula
for multiple  hypergeometric series of type $A$.
In Section 3, we present some transformation formula for basic hypergeometric 
series of type $A$ with different dimension. They include most of results 
in our previous work \cite{Kaji1}. What is remarkable is that from the balanced 
duality transformation formula, one can obtain multiple Euler transformation
formula itself.

 By iterating twice  a special case of our Sears 
transformation formula (see section 7 in \cite{Kaji1}),
we verified an $A_n$ Sears transformation formula in \cite{KajiS}. 
Later by the same idea as above, we obtained in \cite{KajiNou} two types
of $A_n$ Bailey transformation formulas: one of which is previously
known by  Milne and Newcomb \cite{MilNew1} in  basic case and Rosengren
\cite{RoseE} in elliptic case and another has appeared to be new in both cases. 
 In Section 4, we further employ this idea to obtain several $A_n$ basic hypergeometric 
transformations which generalize Watson, Sears and nonterminating ${}_8 W_7$ 
transformations. They includes known ones due to Milne and his collaborators.
We will see here that our class of multiple hypergeometric transformations 
shed a light to the structure of some $A_n$ hypergeometric transformation formulas.


\section{Preliminaries}
 
\medskip 

In this section, we give some notations for multiple basic hypergeometric 
series and present the balanced duality transformation formula. 
We basically refer 
the notations of $q$-series and basic hypergeometric series
from the book by Gasper and Rahman \cite{GR1}. Throughout of this paper, 
we assume that $q$ is a complex number under the condition $0 < |q|< 1$.   
We define $q-$shifted factorial as 
\begin{equation}\label{ShiftFact}
(a)_\infty := (a;q)_\infty =\prod_{n \in \Bbb N}(1 - a q^n), \quad 
(a)_k := (a;q)_k = \frac{(a)_\infty}{(a q^k)_\infty} 
\quad \mbox{for} 
\ k \in \Bbb C.
\end{equation}
where, unless stated otherwise, we omit the basis $q$.
In this paper we employ the notation as
\begin{equation}
(a_1)_k \cdot (a_2)_k \cdots (a_n)_k
=(a_1, a_2, \cdots , a_n)_k.
\end{equation}

The basic hypergeometric series ${}_{r+1} \phi_{r}$ is defined by 
\begin{eqnarray}\label{BHSdef}
&&
{}_{r+1} \phi_r 
\left[
	\begin{matrix}
		a_1, a_2, \ldots  a_{r+1}\\
 			c_1,  \ldots  c_r
\end{matrix}
;q; u
\right] =
{}_{r+1} \phi_r 
\left[
	\begin{matrix}
		a_0, \{ a_i \}_{r}\\
 			\{c_i  \}_r
\end{matrix}
;q; u
\right]
\\
&&
\quad \quad \quad \quad \quad \quad \quad
:= \sum_{k \in \Bbb N} 
\frac{(a_0, a_1, \ldots ,a_{r+1})_k }
{( c_1, \ldots , c_r, q )_k} u^k   
\nonumber
\end{eqnarray}
with $r+1$ numerator parameters $ a_0, a_1, \cdots , a_{r+1}$ and 
$r$ denominator parameters $c_1, \cdots , c_r$.
We call ${}_{r+1} \phi_r$ series $k$-balanced if 
$ q^k a_1 a_2 \cdots a_{r+1} = c_1 \cdots c_r$ and $u=q$: a 
$1$-balanced series is called balanced (or Saalsch\"{u}tzian).
An ${}_{r+1} \phi_{r}$ series is called well-poised if 
$a_0 q = a_1 c_1 = \cdots = a_r c_r$. In addition, if  
$a_1 = q \sqrt{a_0}$ and $a_2 = - q \sqrt{a_0}$,  then 
the ${}_{r+1} \phi_r $ is called very well-poised.
We denote the very well-poised basic 
hypergeometric series ${}_{r+3} \phi_{r+2} $ as 
${}_{r+3} W_{r+2}$ series defined by the following:

\begin{eqnarray}\label{DefVWP}
&&{}_{r+3} W_{r+2}
	\left[
		 s;  \{a_i \}_r; 
		 q; u
	\right]
:=
{}_{r+3} \phi_{r+2}
	\left[
		\begin{matrix}
			& s, & q \sqrt{s},&  - q \sqrt{s},& \{ a_i \}_r \\
			& & \sqrt{s},& - \sqrt{s},& \{ s q / a_i \}_r 
		\end{matrix};
		q; u
	\right]
\\
&&
\quad \quad \quad \quad \quad \quad 
\ = \ 
\sum_{k \in {\Bbb N}} 
\frac{1 - s q^{2 k} }{1 - s}
\frac{( s, a_1 \cdots ,a_r)_k}
{(q, s q / a_1,  \cdots , s q /a_r)_k}
u^k.
\nonumber
\end{eqnarray}
Furthermore, all of the very well-poised basic hypergeometric series 
${}_{r+3} W_{r+2}$ in this paper, the parameters $s, a_1, \cdots a_r$ and 
the argument satisfy the very-well-poised-balancing condition
\begin{equation}\label{vwpb}
a_1 \cdots a_r u = \left( \pm (s q)^{\frac{1}{2}} \right)^{r-1} 
\end{equation}
with either the plus and minus sign. We call a ${}_{r+3} W_{r+2}$ series very-well-poised-balanced
if \eqref{vwpb} holds. Note that a very-well-poised-balanced ${}_{r+3} W_{r+2}$ series is 
($1$-)balanced if
\begin{equation}
a_1 \cdots a_r = s^{\frac{r-1}{2}} q^{\frac{r-3}{2}}
\end{equation}
and $u=q$.

Now, we note the conventions for naming series as $A_n$ basic hypergeometric series 
(or basic hypergeometric series in $SU(n+1)$).
Let $\gamma= (\gamma_1, \cdots , \gamma_n) \in \mathbb{N}^n$ be a multi-index.
We denote 
\begin{equation}
\Delta(x) := \prod_{1 \le i < j \le n}
(x_i - x_j)
\quad  \mbox{and}
\quad
\Delta(x q^\gamma) := \prod_{1 \le i < j \le n}
(x_i q^{\gamma_i} - x_j q^{\gamma_j}),  
\end{equation}
as the Vandermonde determinant for the sets of variables $x=(x_1, \cdots , x_n)$
and $x q^\gamma = (x_1 q^{\gamma_1}, \cdots ,  x_n q^{\gamma_n})$  respectively.
In this paper we refer multiple series of the form
\begin{equation}\label{DefAnBHS}
\sum_{\gamma \in {\Bbb N}^n}
\frac{\Delta (x q^{\gamma})}{\Delta (x)}
\
H(\gamma)
\end{equation}
which reduce to basic hypergeometric series ${}_{r+1} \phi_r$ for a nonnegative 
integer $r$ when $n=1$ and symmetric with respect to the subscript $1 \le i \le n$
as $A_n$ basic hypergeometric series.
We call such a series balanced if it reduces to a balanced series when $n=1$.
Very well-poised and so on are defined similarly.
The subscript $n$ in the label $A_n$ attached to the series is the dimension of 
the multiple series \eqref{DefAnBHS}.

In our previous work \cite{Kaji1},
we derived a hypergeometric transformation formula for multiple basic hypergeometric series
of type $A$ generalizing the following transformation for terminating balanced ${}_{10} W_9$ series
in the one dimensional case:
\begin{eqnarray}\label{m1n1BDT1}
&&
{}_{10} W_{9} 
\left[
	\begin{matrix}
		a; b, c, d, e, f,  \mu f q^N, q^{-N}
	\end{matrix};
q; q 
\right]
=
\frac
{(\mu b f/ a, \mu c f / a, \mu d f / a, \mu e f / a, 
a q, f)_N }
{(a q / b, a q / c, a q / d, a q/e, \mu q, \mu f / a)_N}
\\
&& 
\quad \quad \quad \quad \quad \quad  \quad \quad \quad 
\times
{}_{10} W_{9} 
\left[
	\begin{matrix}
		\mu ; a q / b f,  a q / c f,  a q / d f,  a q / e f,  
        \mu f / a, \mu f q^{N}, q^{-N}
	\end{matrix};
q;  q
\right],
\nonumber
\end{eqnarray}
where $ \mu = a^3 q^2 /bcde f^2$. Note that \eqref{m1n1BDT1} is a basic analogue of 
\eqref{Cmn1BDT1}. The transformation \eqref{m1n1BDT1}  can be obtained by iterating 
 the  Bailey transformation formula for ${}_{10} W_9$ series 

\begin{eqnarray}\label{BaileyT1}
&&
{}_{10} W_{9} 
\left[
		a;b, c, d, e, f, 
		\lambda a q^{N+1}/e f, q^{-N}
; q; q
\right]
=
\frac{(a q, a q /e f, \lambda q /e, \lambda q /f)_N }
{(a q/ e, a q /f, \lambda q, \lambda q /e f)_N }
\\
&&
\quad \quad \quad 
\times
{}_{10} W_{9} 
\left[
		\lambda; \lambda b /a,
		\lambda c/ a, \lambda d / a , e, f, 
		\lambda a q^{N+1}/e f, q^{-N}
; q; q
\right]
\quad \quad \quad
( \lambda = a^2 q^{} /b c d),
\nonumber
\end{eqnarray}
twice.
Note also that \eqref{BaileyT1} is a basic analogue of \eqref{CBaileyT1}.

To simplify the expressions for multiple very well-poised series,
we introduce the notation of $W^{n,m}$ series as the following:

\begin{eqnarray}\label{DefWSer}
&&
\MW{n, m}{\{a_i\}_n \\ \{x_i\}_n}
{s}{ \{u_k \}_{m}}{\{v_k \}_{m}}
{q; z}\\ 
&&
\quad
:=
\sum_{\gamma \in \mathbb{N}^n}{}\ z^{|\gamma|}
\prod_{1\le i<j\le n}{}\,
\frac{\Delta(x q^\gamma )}{\Delta(x)}\ 
\prod_{1 \le i \le n}
\frac{1 - sq^{|\gamma|+\gamma_i}x_i}
{1 - s x_i}
\nonumber
\\ 
&&
\quad \quad
\times
\prod_{1\le j \le n}
\frac{(s x_j)_{|\gamma|}}{( (s q /a_j) x_j) _{|\gamma|}}
\left(
\prod_{1 \le i \le n}
\frac{(a_j x_i / x_j)_{\gamma_i}}{(q x_i / x_j )_{\gamma_i}}
\right)
\nonumber
\\
&&
\quad \quad \quad
\times
\prod_{1 \le k \le m}
\frac{(v_k)_{|\gamma|}}{( s  q /u_k)_{|\gamma|}}
\left(
\prod_{1 \le i \le n}
\frac{(u_k  x_i)_{\gamma_i}}{( ( s q/ v_k )x_i)_{\gamma_i}}
\right),\nonumber
\end{eqnarray}
where 
$|\gamma| = \gamma_1 + \cdots + \gamma_n$ is the length of a multi-index $\gamma$.

\medskip

The very starting point of all the discussions of the present paper 
is the {\it balanced duality transformation formula} (Corollary 6.3 of \cite {Kaji1}
with a different notation) between the $W^{n,m+2}$ series ($A_n$ ${}_{2m+8} W_{2m+7}$ series)
and $W^{m, n+2}$ series ($A_m$ ${}_{2n+8} W_{2n+7}$ series):
\begin{eqnarray}\label{BDT1}
&&
\MW{n,m+2}
{ \{ b_i\}_n \\ \{ x_i\}_n}
{a}{\{ c_k y_k \}_m, d, e}
{\{f y_k^{-1}\}_m, \mu f q^N, q^{-N}}
{q; q}
\\
&&
\quad
=
\frac{(\mu d f / a, \mu e f / a)_N}
{(a q / d, a q / e)_N}
\prod_{ 1 \le k \le m}
\frac{((\mu c_k f / a) y_k, f y_k^{-1} )_N}
{(\mu q y_k, (a q /c_k) y_k^{-1} )_N}
\prod_{ 1 \le i \le n}
\frac{(a q x_i, ( \mu b_i f / a ) x_i^{-1} )_N}
{((a q / b_i) x_i, (\mu f / a) x_i^{-1} )_N}
\nonumber\\
&&
\quad \quad 
\times
\MMW{m,n+2}
{\{ a q / c_k f \}_m \\  \{y_k\}_m}
{\mu}{\{(a q / b_i f) x_i \}_n, a q/ d f, a q/ e f}
\nonumber \\
&&
\quad \quad \quad \quad \quad
\quad \quad \quad \quad \quad
\quad \quad \quad \quad \quad
\quad \quad \quad \quad \quad
\quad \quad
{\{ (\mu f / a) x_i^{-1} \}_n, \mu f q^N, q^{-N}};
{q; q}
\Big),\nonumber
\end{eqnarray}
where $\mu = a^{m+2} q^{m+1} /BCde f^{m+1}$.
Here we denote  $B= b_1 \cdots b_n$ and $ C = c_1 \cdots c_m$. 
In this paper, we frequently use such notations.
 
In the case when $m=1$ and $y_1= 1$, \eqref{BDT1} reduces to
\begin{eqnarray}\label{m1BDT1}
&&
\MW{n,3}
{ \{ b_i\}_n \\ \{ x_i\}_n}
{a}{c, d, e} 
{f, \mu f q^N, q^{-N}}
{q; q} \\ 
&&
\quad 
=
\frac{(\mu c f /a, \mu d f / a, \mu e f / a, f )_N}
{(a q/ c, a q / d, a q / e, \mu q )_N }
\prod_{ 1 \le i \le n}
\frac{(a q x_i, ( \mu b_i f / a ) x_i^{-1} )_N}
{((a q / b_i) x_i, ( \mu f / a) x_i^{-1} )_N}
\nonumber
\\
&&
\quad \quad
\times
{}_{2 n + 8}W_{2 n + 7}
\left[
\mu; \{ (a q / b_i f) x_i \}_n, a q / c f, a q / d f, a q / e f, 
\right.
\nonumber
\\
&& \qquad \qquad \qquad \qquad \quad  \quad  
\left.
\{ (\mu f / a) x_n / x_i\}_n, \mu f q^N, q^{-N}; q; q
\right],
\quad
(\mu = a^{3} q^{2} / B c d e f^{2}). 
\nonumber
\end{eqnarray}
Note that $m=n=1$ and $x_1 = y_1 = 1$ case of  the balanced duality transformation formula 
\eqref{BDT1} is terminating balanced ${}_{10} W_9$ transformation \eqref{m1n1BDT1}.

In \cite{Kaji1}, \eqref{BDT1} was obtained by taking the coefficients of $u^N$
 in both sides of  "$0$-balanced"  case of the multiple Euler transformation formula 
for multiple basic hypergeometric series of type $A$ with different
dimensions (Theorem 1.1 of \cite{Kaji1})

\begin{eqnarray}\label{ETG}
&&
\sum_{\gamma \in {\Bbb N}^n}
u^{|\gamma|} 
\frac{\Delta (x q^{\gamma})}{\Delta (x)}
\prod_{1 \le i, j \le n}
\frac{(a_j x_i / x_j)_{\gamma_i}} {(q x_i / x_j)_{\gamma_i}}
\prod_{1 \le i \le n, 1 \le k \le m}
\frac{(b_k x_i y_k)_{\gamma_i}}{(c x_i y_k)_{\gamma_i}}
\\
&& 
\quad 
=
\
\frac{(A B  u/ c^m)_\infty}
{(u)_\infty}
\
\sum_{\delta \in {\Bbb N}^m}
 \left( \frac{A B  u}{c^m} \right)^{|\delta|} 
\frac{\Delta (y q^{\delta})}{\Delta (y)}
\nonumber\\
&& 
\quad \quad 
\times
\prod_{1 \le k, l \le m}
\frac{((c / b_l^{}) y_k / y_l)_{\delta_k}}{(q y_k / y_l)_{\delta_k}}
\prod_{1 \le i \le n, 1 \le k \le m}
\frac{((c /a_i^{}) x_i y_k )_{\delta_k}}
{(c x_i y_k)_{\delta_k}}. 
\nonumber
\end{eqnarray}
From this point of view, we can state the balanced duality transformation \eqref{BDT1}
in more general form:
(Proposition 6.2 in \cite{Kaji1})
\begin{eqnarray}\label{HBDT1}
&&
\sum_{\gamma \in {\Bbb N}^n, |\gamma| = N}
\frac{\Delta (x q^{\gamma})}{\Delta (x)}
\prod_{1 \le i, j \le n}
\frac{(a_j x_i / x_j)_{\gamma_i}} {(q x_i / x_j)_{\gamma_i}}
\prod_{1 \le i \le n, 1 \le k \le m}
\frac{(b_k x_i y_k)_{\gamma_i}}{(c x_i y_k )_{\gamma_i}}
\\
&& 
\quad 
= \ 
\sum_{\delta \in {\Bbb N}^m, |\delta| = N}
\frac{\Delta (y q^{\delta})}{\Delta (y)}
\prod_{1 \le k, l \le m}
\frac{((c / b_l^{}) y_k / y_l)_{\delta_k}}{(q y_k / y_l)_{\delta_k}}
\prod_{1 \le i \le n, 1 \le k \le m}
\frac{((c /a_i^{}) x_i y_k )_{\delta_k}}
{(c x_i y_k )_{\delta_k}}, 
\nonumber
\end{eqnarray}
when $ AB = c^m$.

The balanced duality transformation formula \eqref{BDT1} corresponds to the case 
when $ m,n \ge 2$ of \eqref{HBDT1} 
by a rearrangement of parameters in the multiple basic hypergeometric series.
Note that, in the case when $m=n=1$, \eqref{HBDT1} becomes tautological.
We shall also remark that the remaining case ($m=1,  n \ge 2$) corresponds to 
the $A_n$ Jackson summation formula for terminating balanced $W^{n,2}$ series
\begin{eqnarray}\label{JS1}
&&  
\MW{n,2}
{ \{ b_i\}_n \\ \{ x_i\}_n}
{a}{ c, e}
{d, q^{-N}}
{q; q} \\
&& 
\quad \quad \quad \quad \quad \quad 
\ = \
\frac{(a q / B c, a q / c d)_N}
{(a q / B c d, a q / c)_N}
\prod_{1 \le i \le n}
\frac{((a q /b_i d) x_i, a q x_i)_N}
{((a q /b_i) x_i, (a q/ d) x_i)_N}
\nonumber
\end{eqnarray}
provided $ a^2 q^{N+1} = B cde$, 
which is originally due to S.Milne \cite{Milne2} in a different notation. 
For these facts, the informed readers might see \cite{Kaji1} for ordinary and basic case 
and Noumi and the author \cite{KajiNou} for elliptic case.

\begin{rem}
In the case when $n=1$ and $x_1= 1$, \eqref{JS1} reduces to the Jackson summation formula for terminating
balanced ${}_8 W_7$ series:
\begin{eqnarray}\label{JacksonSum}
{}_8 W_7 
[
{a}; b, { c, d, e}
, { q^{-N}};
{q; q}
]
&=&
\frac{(a q, a q / b c, a q / b d, a q / c d)_N}
{(a q / b c d, a q / b, a q / c, a q / d)_N}
, \quad 
(a^2 q^{N+1} = b c d e).
\end{eqnarray}
We also mention that \eqref{JS1} can be obtained by letting $aq = ef$ in \eqref{m1BDT1} and by relabeling 
the parameters. 
\end{rem}


\section{Limit cases}

In this section, we shall show that several  transformation
formulas with different dimension can be obtained from the balanced 
duality transformation formula
\eqref{BDT1}
by taking certain limits.  
We see that most of principal transformation formulas 
in \cite{Kaji1} which have been obtained by taking a certain 
coefficient in the multiple Euler transformation formula \eqref{ETG}
can be recovered and we find some new transformation formulas.
Furthermore we show that \eqref{ETG} itself can be acquired in this 
manner. 
 
 In addition, we shall write down the cases when the dimension of
summand in each side of the transformation is one with particular 
attention.
We consider them as particularly significant ones
since they have an extra symmetry.
We will explore some $A_n$ hypergeometric transformations
by using some of them in the next section.

\subsection{(Non-balanced) Duality transformation formula and its inverse}

\medskip

\noindent
{\bf {(Non-balanced) Duality transformation formula}}

\medskip

\begin{prop}
\begin{eqnarray}\label{DT1}
&&
\MW{n, m+1}{\{b_i \}_n \\ \{x_i\}_n}{a}{c, \{d_k y_k  \}_m}
{q^{-N}, \{e  y_k^{-1} \}_m}
{q; \frac{a^{m+1} q^{N + m + 1} }{B c D e^m}}
\\
&&
\quad 
=
\
\frac{(a^{m+1} q^{m+1} /B c D e^m)_N}
{(a q /c)_N}
\prod_{ 1 \le i \le n}
\frac
{(a q x_i)_N}{((a q / b_i)x_i)_N}
\prod_{ 1 \le k \le m}
\frac
{(e y_k^{-1})_N}{((a q/ d_i) y_k^{-1} )_N}
\nonumber\\
&&
\quad \quad 
\times 
\sum_{\delta \in {\Bbb N}^m}
q^{|\gamma|}
\frac{\Delta (y q^{\delta})}{\Delta (y)}
\frac
{(q^{-N})_{|\gamma|}}
{(a^{m+1 } q^{m+1} /B c D e^m)_{|\gamma|}}
\prod_{1 \le k,l \le m}
\frac{((a q / d_l e ) y_k / y_l)_{\delta_k}} {(q y_k / y_l)_{\delta_k}}
\nonumber 
\\
&&
\quad \quad \quad
\times
\prod_{1 \le i \le n, 1 \le k \le m}
\frac{( (a q /b_i e) x_i y_k )_{\delta_k}}
{((a q / e) x_i y_k )_{\delta_k}}
\prod_{1 \le k \le m}
\frac{( (a q /c e) y_k)_{\delta_k}}
{(q^{1-N} e^{-1} y_k)_{\delta_k}},
\nonumber
\end{eqnarray}
where $B = b_1 \cdots b_m$ and $D = d_1 \cdots d_n$.
\end{prop}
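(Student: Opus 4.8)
The plan is to obtain \eqref{DT1} as a limiting (confluent) case of the balanced duality transformation formula \eqref{BDT1}, in keeping with the theme of this section. First I would fix the parameter correspondence by reading off the $u$- and $v$-entries of the two left-hand series. In \eqref{BDT1} the $W^{n,m+2}$ series carries $u=(\{c_k y_k\}_m,d,e)$ and $v=(\{f y_k^{-1}\}_m,\mu f q^N,q^{-N})$, whereas the $W^{n,m+1}$ series in \eqref{DT1} carries $u=(c,\{d_k y_k\}_m)$ and $v=(q^{-N},\{e y_k^{-1}\}_m)$. This matching forces the substitution $c_k\mapsto d_k$, $e\mapsto c$, $f\mapsto e$ together with the deletion of the single pair $(d,\mu f q^N)$. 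The deletion is realized by the limit $d\to 0$; since both members of \eqref{BDT1} terminate (the entry $q^{-N}$ makes $(q^{-N})_{|\gamma|}$ and $(q^{-N})_{|\delta|}$ vanish beyond $|\gamma|,|\delta|\le N$), every limit below is taken term by term and no convergence issue arises.

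On the left-hand side only the factor attached to $(d,\mu f q^N)$ degenerates. As $d\to 0$ one has $(d x_i)_{\gamma_i}\to 1$ and $(aq/(\mu f q^N)\,x_i)_{\gamma_i}\to 1$, while the ratio $(\mu f q^N)_{|\gamma|}/(aq/d)_{|\gamma|}$ tends to $\bigl(a^{m+1}q^{N+m}/(BCe f^m)\bigr)^{|\gamma|}$, the leading factors $(-X)^{|\gamma|}q^{\binom{|\gamma|}{2}}$ cancelling and the explicit $d$ dropping out. This promotes the argument $z=q$ to $a^{m+1}q^{N+m+1}/(BCe f^m)$, which after the relabelling is precisely the argument $a^{m+1}q^{N+m+1}/(BcDe^m)$ of \eqref{DT1}; all surviving $u,v$-pairs reproduce the $W^{n,m+1}$ series verbatim.

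The genuine work is on the right, where $\mu=a^{m+2}q^{m+1}/(BCde f^{m+1})\to\infty$ and the very-well-poised structure of the $W^{m,n+2}$ series collapses. Several entries diverge at once: $aq/df$, $\mu f/a$, $\mu f q^N$, $\mu ef/a$, $\mu q^{N+1}$, and the $\mu c_l f/a$ issuing from the numerator block $\alpha_l=aq/c_lf$. I would insert the leading asymptotics $(X)_{j}\sim(-X)^{j}q^{\binom{j}{2}}$ into every divergent Pochhammer and check that, the number of divergent factors in the numerator equalling that in the denominator, all spurious $q^{\binom{|\delta|}{2}}$, $q^{\sum_k\binom{\delta_k}{2}}$ and $\prod_k y_k^{\delta_k}$ contributions cancel. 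What is left is finite and explicit: the very-well-poised prefactor $\prod_k(1-\mu q^{|\delta|+\delta_k}y_k)/(1-\mu y_k)\to q^{(m+1)|\delta|}$; the $\alpha_l$-block contributing $(a^m/(Cf^m))^{|\delta|}$ with the finite products $((aq/d_le)y_k/y_l)_{\delta_k}/(qy_k/y_l)_{\delta_k}$; the $n$ pairs $((aq/b_if)x_i,(\mu f/a)x_i^{-1})$ each contributing $b_i^{-|\delta|}$ together with $((aq/b_ie)x_iy_k)_{\delta_k}/((aq/e)x_iy_k)_{\delta_k}$; and the two remaining pairs, which supply $(q^{-N})_{|\delta|}/(a^{m+1}q^{m+1}/(BcDe^m))_{|\delta|}$ and $((aq/ce)y_k)_{\delta_k}/(q^{1-N}e^{-1}y_k)_{\delta_k}$ plus a residual scalar. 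Collecting all scalar powers, the $a$-, $B$-, $C$- and $f$-dependence cancels and the total argument collapses to $q^{|\delta|}$, giving exactly the non-very-well-poised $A_m$ sum in \eqref{DT1}.

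Finally I would treat the prefactor of \eqref{BDT1} identically: the divergent numerator factors $(\mu ef/a)_N$, $\prod_k(\mu c_kf y_k/a)_N$, $\prod_i(\mu b_if x_i^{-1}/a)_N$ and the divergent denominator factors $(aq/d)_N$, $\prod_k(\mu q y_k)_N$, $\prod_i(\mu f x_i^{-1}/a)_N$ are equal in number, so their leading terms cancel up to a scalar that evaluates to $1$, and the finite survivors are precisely the prefactor recorded in \eqref{DT1}. The main obstacle is thus purely organizational: marshalling the many simultaneous divergences on the right (and in the prefactor) so that the unwanted $q^{\binom{\cdot}{2}}$ and monomial factors provably cancel and the leftover constant combines with $z=q$ to yield the stated argument. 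Once the substitution $(c_k,e,f)\mapsto(d_k,c,e)$ and the limit $d\to 0$ are fixed, everything else is routine asymptotic bookkeeping.
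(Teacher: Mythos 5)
Your proof is correct, and it sits within the same overall strategy as the paper's — a one-parameter confluence of the balanced duality transformation \eqref{BDT1} followed by relabelling — but you degenerate a different parameter, and the difference in labor is substantial. The paper's own proof is two lines: let $e\to\infty$, so that $\mu=a^{m+2}q^{m+1}/BCdef^{m+1}\to 0$, then relabel ($f\to e$, together with the tacit $d\to c$, $c_k\to d_k$). With $\mu\to 0$, every $\mu$-dependent factor on the right of \eqref{BDT1} tends to $1$ \emph{individually} — the very-well-poised quotient, the blocks $(\mu y_l)_{|\delta|}/((\mu c_lf/a)y_l)_{|\delta|}$, the pairs involving $\mu f/a$ — and the one finite $\mu$-combination, $\mu ef/a=a^{m+1}q^{m+1}/BCdf^m$, supplies both the prefactor entry and the balanced denominator of the new sum; the only divergent-leading-term cancellation occurs on the left, in $(ex_i)_{\gamma_i}/((aq/\mu fq^N)\,x_i)_{\gamma_i}$, which produces the new argument. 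You instead send $d\to 0$, so $\mu\to\infty$ with $\mu d$ fixed; this is a genuinely different degeneration (it is not the image of the paper's limit under the manifest $d\leftrightarrow e$ symmetry of \eqref{BDT1}, since one parameter goes to $0$ and the other to $\infty$), and it transfers all the analytic work to the right-hand side, where the divergences must be paired off. Your bookkeeping checks out: $\prod_k(1-\mu q^{|\delta|+\delta_k}y_k)/(1-\mu y_k)\to q^{(m+1)|\delta|}$, the array block gives $(a^m/Cf^m)^{|\delta|}$, the $n$ pairs give $B^{-|\delta|}$, and the two remaining pairs give $(BCf^m/a^mq^{m+1})^{|\delta|}$ together with the finite survivors $(q^{-N})_{|\delta|}/(\mu df/a)_{|\delta|}$ (note $\mu df/a=a^{m+1}q^{m+1}/BCef^m$ stays finite) and $\prod_k((aq/ef)y_k)_{\delta_k}/((q^{1-N}/f)y_k)_{\delta_k}$; the scalars indeed multiply to exactly $q^{|\delta|}$, in the prefactor the $1+m+n$ divergent numerator factors cancel against the $1+m+n$ divergent denominator factors to the constant $1$, and termination (the entry $q^{-N}$ on both sides) legitimizes all termwise limits, as you note. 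So both routes are valid and both land on \eqref{DT1} after the substitution $(c_k,e,f)\mapsto(d_k,c,e)$ you specify; what the paper's choice of which pair to delete buys is economy — a soft, essentially computation-free collapse of the very-well-poised structure — whereas your route is correct at the cost of the asymptotic marshalling you yourself identify as the main obstacle.
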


\begin{proof}
Take the limit $e \to \infty$ in \eqref{BDT1}. By rearranging 
the parameter as $f \to e$, we arrive at the desired identity.
\end{proof} 

\begin{rem}
The transformation formula \eqref{DT1} has already appeared in Section 6.1 of our previous work
\cite{Kaji1} with a different notation.  Later Rosengren has also 
obtained in \cite{RoseKM} by using his reduction formula of 
Karlsson-Minton type. In contract to that we see here, 
as is mentioned in \cite{RoseKM}, the balanced duality transformation 
formula \eqref{BDT1} can also be considered as 
a special case of \eqref{DT1}.
Indeed, in \cite{Kaji1} though we have started the derivation of \eqref{DT1}
 from the multiple Euler transformation \eqref{ETG} in general case, \eqref{BDT1} 
has been obtained from the "zero"-balanced case of \eqref{ETG}. 

 In the case when $m=n=1$ and $x_1 = y_1 = 1$, \eqref{DT1} reduces to
\begin{eqnarray}\label{nm1DT1}
&&
{}_{8} W_{7} 
\left[	
a; b, c, d, e, q^{-N}; q; \frac{a^2 q^{N+2}}{b c d e}
\right]
\\
&& \quad \quad \quad 
=
\frac{(a^2 q^2 / b c d e, e, a q)_N}
{(a q / b, a q / c, a q / d)_N}
{}_{4} \phi_{3} 
\left[
	\begin{matrix}
		q^{-N}, a q / b e, a q / c e, a q / d e \\
		q^{1-N} /e, a^2 q^2 / b c d e, a q / e
	\end{matrix};
q; q
\right].
\nonumber
\end{eqnarray}
\end{rem}

 In \cite{Kaji1}, \eqref{DT1} is referred as Watson type transformation formula.
But, hereafter we shall propose to call \eqref{DT1} (non balanced) duality transformation formula. 

In the case when $m=1$ and $y_1$, \eqref{DT1} reduces to 

\begin{eqnarray}\label{m1DT1}
&&
\MW{n, 2}{\{b_i \}_n \\ \{x_i\}_n}{a}{c, d}
{q^{-N}, e }
{q; \frac{a^{2} q^{N + 2} }{B c d e}}
=
\frac
{( a^{2} q^{2} / B c d e, e)_N}
{(a q /c, a q / d)_N }
\prod_{ 1 \le i \le n}
\frac
{(a q x_i)_N }
{((a q / b_i) x_i)_N }
\nonumber\\
&& \quad \quad \quad 
\times 
{}_{n+3} \phi_{n+2} 
\left[
	\begin{matrix}
		q^{-N}, \{(a q /b_i e) x_i \}_{n}, 
	        a q / c e, a q / d e \\
		q^{1-N} /e, \{(a q /e) x_i \}_{n}, 
	        a^2 q^2 / B c d e, 
	\end{matrix};
q; q
\right].
\end{eqnarray} 

In the case of $n=1$ and $x_1 =1$, \eqref{DT1} reduces to
\begin{eqnarray}\label{n1DT1}
&&{}_{2 m + 6} W_{2 m + 5}
\left[	
a; b, \{c_k y_k \}_{m}, 
d, \{e y_k^{-1}  \}_{m}, q^{-N}; q; 
\frac{a^{m+1} q^{N+m+1}}{b C d e^m}
\right]
\\
&& 
\quad =
\frac
{( a^{m+1} q^{m+1} / b C d e^{m}, a q)_N}
{(a q / b, a q / d)_N }
\prod_{ 1 \le k \le m}
\frac
{(e y_k^{-1} )_N}
{((a q /c_k) y_k^{-1} )_N}
\nonumber\\
&& 
\quad \quad 
\times
\sum_{\delta \in {\Bbb N}^{m} }
q^{|\delta|}
\frac{\Delta ({y} q^{\delta})}{\Delta ({y})}
\prod_{1 \le k, l \le m}
\frac{(( a q / c_l e) y_k / y_l)_{\delta_k}}{(q y_k / y_l)_{\delta_k}}
\nonumber\\
&& 
\quad  \quad \quad \times
\frac{(q^{-N})_{|\delta|}}
{( a^{m+1} q^{m+1} / b C d e^{m})_{|\delta|}}
\prod_{1 \le k \le m}
\frac{((a q /b e) y_k, (a q / d e ) y_k)_{\delta_k}}
{(( a q / e) y_k, ( q^{1-N} / e) y_k)_{\delta_k}}.
\nonumber
\end{eqnarray}

Further, by letting $aq = de$ in \eqref{m1DT1}, we obtain Milne`s $A_n$ generalization 
of Rogers` summation formula for terminating very well-poised ${}_6 W_5$ series:

\begin{eqnarray}\label{RS1}
\MW{n, 1}{\{b_i \}_n \\ \{x_i\}_n}{a}{c}
{q^{-N}}
{q; \frac{a^{} q^{N + 1} }{B c}}
&=&
\frac
{( a^{} q^{} / B c)_N}
{(a q /c)_N }
\prod_{ 1 \le i \le n}
\frac
{(a q x_i)_N }
{((a q / b_i) x_i)_N }.
\end{eqnarray}
\begin{rem}
In the case when $n=1$ and $x_1= 1$, \eqref{RS1} reduces to
the Rogers` summation formula ( (2.4.2) in \cite{GR1} ): 
\begin{eqnarray}\label{RogersSum1}
{}_{6} W_{5}
\left[ a; b, c, q^{-N}; q; \frac{a q^{N+1}}{b c}
\right]
&=&
\frac{(a q, a q / b c)_N}
{(a q / b, a q /c )_N}.
\end{eqnarray}
\end{rem}

\medskip

\noindent
{\bf {\large The inverse of the duality transformation}}

\medskip

\begin{prop}
\begin{eqnarray}\label{IDT1}
&&
\sum_{\gamma \in {\Bbb N}^{n}}
q^{|\gamma|} 
\frac{\Delta({x} q^{\gamma})}{\Delta({x})}
\prod_{1 \le i, j \le n}
\frac{(a_j x_i / x_j)_{\gamma_i}} 
{(q x_i / x_j)_{\gamma_i}}
\prod_{1 \le i \le n, 1 \le k \le m}
\frac
{(b_k x_i y_k)_{\gamma_i} }
{(e x_i y_k )_{\gamma_i} }
\\
&& \quad \times
\frac{(q^{-N})_{|\gamma|}}
{(d)_{|\gamma|}}
\prod_{1 \le i\le n}
\frac{(c x_i)_{\gamma_i}} 
{( (AB c q^{1-N}/ d e^m) x_i)_{\gamma_i}}
\nonumber 
\\
&&
\quad \quad 
=
\frac{(d e^m /A B)_N}
{(d)_N}
\prod_{ 1 \le k \le m}
\frac
{((d e^m b_k / A B c) y_k )_N}
{((d e^{m+1}/ A B c) y_k )_N}
\prod_{ 1 \le i \le n}
\frac
{((d e^m a_i / A B c) x_i^{-1} )_N}
{((d e^m/ A B c) x_i^{-1} )_N}
\nonumber\\
&& \quad \quad \quad 
\times
\MMW{m, n+1}{\{e/ b_k \}_m \\ \{y_k\}_m}{d e^{m+1} q^{-1}/ A B c}
{ e/ c, \{ (e / a_i) x_i  \}_n}
\nonumber
\\
&&
\quad \quad \quad \quad \quad 
\quad \quad \quad \quad \quad 
\quad \quad \quad \quad \quad 
\quad \quad \quad  
{q^{-N}, \{ (d e^m / A B c) x_i^{-1} \}_n};
{q; d q^N}
\big).
\nonumber
\end{eqnarray}
\end{prop}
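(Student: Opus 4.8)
The plan is to obtain \eqref{IDT1} directly from the (non-balanced) duality transformation \eqref{DT1} by interchanging the two dimensions and then reading the resulting identity in the opposite direction. The structural clue is that \eqref{IDT1} is the mirror image of \eqref{DT1}: whereas in \eqref{DT1} the $W$-series (of dimension $n$, in the variables $x$) stands on the left and the explicit multiple sum (of dimension $m$, in the variables $y$) on the right, in \eqref{IDT1} the explicit $n$-fold sum over $\gamma$ stands on the left and the $W^{m,n+1}$-series in the variables $y$ on the right. Since every series in sight terminates (because of the factor $q^{-N}$), the identity \eqref{DT1} may be read as an equation in either direction with no convergence concern.

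First I would rewrite \eqref{DT1} with $n$ and $m$ interchanged and, correspondingly, $x$ and $y$ interchanged; call the parameters of this swapped identity $\tilde a, \{\tilde b_k\}_m, \tilde c, \{\tilde d_i\}_n, \tilde e$, and set $\tilde B = \prod_k \tilde b_k$, $\tilde D = \prod_i \tilde d_i$. Its left-hand side is then a $W^{m,n+1}$-series in $y$ and its right-hand side a prefactor times an explicit $n$-fold sum over $\gamma \in \mathbb{N}^n$ in $x$. Next I would impose the substitution dictionary
\[
\tilde a = \frac{d e^{m+1}}{A B c\, q}, \qquad \tilde b_k = \frac{e}{b_k}, \qquad \tilde c = \frac{e}{c}, \qquad \tilde d_i = \frac{e}{a_i}, \qquad \tilde e = \frac{d e^m}{A B c},
\]
which is exactly the dictionary making the top parameter of the swapped $W$-series equal to $d e^{m+1} q^{-1}/ABc$ and its lower entries equal to $e/c$ and $\{(e/a_i)x_i\}_n$, as on the right of \eqref{IDT1}.

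With this dictionary in force the proof reduces to three bookkeeping verifications. First, the argument $\tilde a^{\,n+1} q^{N+n+1}/(\tilde B\,\tilde c\,\tilde D\,\tilde e^{\,n})$ of the swapped \eqref{DT1} must collapse to $d q^N$, matching the argument of the $W^{m,n+1}$-series in \eqref{IDT1}; en route one finds $\tilde a^{\,n+1} q^{n+1}/(\tilde B\,\tilde c\,\tilde D\,\tilde e^{\,n}) = d$, so the constant $(d)_{|\gamma|}$ in the summand is reproduced. Second, each factor of the swapped summand transforms correctly via $\tilde a q/(\tilde d_i \tilde e) = a_i$, $\tilde a q/(\tilde b_k \tilde e) = b_k$, $\tilde a q/\tilde e = e$, $\tilde a q/(\tilde c\,\tilde e) = c$ and $q^{1-N}\tilde e^{-1} = ABc\,q^{1-N}/d e^m$, turning the swapped summand into precisely the summand on the left of \eqref{IDT1}. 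Third, using $\tilde a q/\tilde c = d e^m/AB$, $\tilde a q/\tilde b_k = d e^m b_k/ABc$ and $\tilde a q/\tilde d_i = d e^m a_i/ABc$, the swapped prefactor matches, group by group, the reciprocal of the prefactor displayed in \eqref{IDT1}. Granting these matchings, \eqref{IDT1} follows by dividing the swapped \eqref{DT1} through by its prefactor and transposing the two sides. The main obstacle is purely the bookkeeping: tracking how the products $A=a_1\cdots a_n$ and $B=b_1\cdots b_m$, together with all the $q$-shifted factorials, behave under the dimension swap and the substitution dictionary, and confirming that nothing is left over once every factor is matched term by term.
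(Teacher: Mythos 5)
Your proposal is correct, but it takes a genuinely different route from the paper's own proof. The paper derives \eqref{IDT1} directly from the balanced duality transformation \eqref{BDT1}: it substitutes $e \mapsto aq/e$ and $f \mapsto aq/f$ there, takes the limit $a \to \infty$ (harmless, since every sum terminates), and then relabels $b_i \to a_i$, $c_k \to b_k$, $d \to c$, $e \to d$, $f \to e$. You instead read \eqref{IDT1} as the dimension-swapped, reversed form of the already-established non-balanced duality transformation \eqref{DT1}, and your dictionary checks out: with $\tilde a = de^{m+1}/(ABcq)$, $\tilde b_k = e/b_k$, $\tilde c = e/c$, $\tilde d_i = e/a_i$, $\tilde e = de^m/(ABc)$ one indeed gets $\tilde a q/(\tilde d_i\tilde e) = a_i$, $\tilde a q/(\tilde b_k\tilde e) = b_k$, $\tilde a q/\tilde e = e$, $\tilde a q/(\tilde c\tilde e) = c$, $q^{1-N}/\tilde e = ABcq^{1-N}/de^m$, and $\tilde a^{\,n+1}q^{n+1}/(\tilde B\tilde c\tilde D\tilde e^{\,n}) = d$, so the argument collapses to $dq^N$ and each prefactor block is exactly the reciprocal of the one displayed in \eqref{IDT1}; dividing through and transposing is legitimate because both sides are terminating sums, rational in the parameters. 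Notably, the paper itself concedes your route in the Remark following the proposition (``\eqref{IDT1} can be obtained by relabeling parameters in \eqref{DT1} appropriately'') without carrying it out. As for what each approach buys: yours is purely algebraic bookkeeping, requires no limiting procedure of its own, and makes the ``inverse'' relationship between \eqref{DT1} and \eqref{IDT1} completely transparent; the paper's substitution-plus-limit argument keeps \eqref{IDT1} on the same footing as the other results of Section 3, all obtained as direct degenerations of the single master formula \eqref{BDT1}, which is the organizing theme of that section. Since \eqref{DT1} is proved (as Proposition 3.1, itself a limit of \eqref{BDT1}) before \eqref{IDT1}, your argument involves no circularity, only one extra intermediate step relative to the paper's direct derivation.
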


\begin{proof} 
Substitute the parameters $e$ and $f$ as $aq/e$ and $aq/f$ respectively in \eqref{BDT1}.
Then take the limit $a \to \infty$. Finally, by rearranging the parameters
as $b_i \to a_i \ (1 \le i \le n ), \ c_k \to b_k \ (1 \le k \le m), \ d \to
c, \  e \to d, \ f \to e$, we have the desired result.
\end{proof}
In the case when $m=1$ and $y_1 = 1$, \eqref{IDT1} reduces to

\begin{eqnarray}\label{m1IDT1}
&&
\sum_{\gamma \in {\Bbb N}^{n}}
q^{|\gamma|} 
\frac{\Delta({x} q^{\gamma})}{\Delta({x})}
\prod_{1 \le i, j \le n}
\frac{(a_j x_i / x_j)_{\gamma_i}} {(q x_i / x_j)_{\gamma_i}}
\prod_{1 \le i \le n}
\frac{(b x_i, c x_i)_{\gamma_i}} 
{(e x_i,  (A b c q^{1-N}/ d e) x_i)_{\gamma_i}}
\\
&&
\quad 
\times
\frac{(q^{-N})_{|\gamma|}}
{(d)_{|\gamma|}}
\ = \
\frac
{(d e  / A c, d e /A b)_N}
{(d e^{2}/ A b c, d)_N}
\prod_{ 1 \le i \le n}
\frac
{((d e a_i / A b c) x_i^{-1} )_N}
{((d e / A b c) x_i^{-1} )_N}
\nonumber\\
&& \quad \quad 
\times \
{}_{2 n +6} W_{2 n + 5}
\left[d e^{2} q^{-1}/ A b c; 
\{ (e / a_i) x_i \}_{n},
\{ (d e q^{-1} / A b c ) x_i^{-1} \}_{n}, 
e/b, e/c, q^{-N};q ; d q^N
\right].
\nonumber
\end{eqnarray}

In the case when $n=1$ and $x_1 = 1$, \eqref{IDT1} reduces to

\begin{eqnarray}\label{n1IDT1}
&&
{}_{m+3} \phi_{m+2}
\left[
\begin{matrix}
q^{-N}, a, \{ b_k y_k \}_{m}, c \\
d, \{ e y_k \}_{m}, a B c q^{1-N} / d e^m
\end{matrix}
; q; q
\right]
\\ 
&&
\quad \quad  
\ = \
\frac{(d e^m /a B, d e^m/ B c)_N}
{(d, d e^m/ a B c)_N}
\prod_{ 1 \le k \le m}
\frac
{((d e^m b_k / a B c) y_k)_N}
{((d e^{m+1}/ a B c) y_k )_N}
\nonumber
\\
&& \quad \quad \quad \quad 
\times
\MW{m, 2}{\{e/ b_k \}_m \\ \{y_k\}_m}{d e^{m+1} q^{-1}/ a B c}
{ e/ c,  e / a }
{q^{-N}, d e^m / A B c}
{q; d q^N}.
\nonumber
\end{eqnarray}

\begin{rem}
The reason why we call \eqref{IDT1} as the inverse of the (non-balanced)
duality transformation \eqref{DT1} is that the transformations which one obtain by applying
\eqref{DT1} and \eqref{IDT1} in both order turn out to be identical as a (multiple) 
hypergeometric series.
Note also that \eqref{IDT1} can be obtained by relabeling parameters in \eqref{DT1}
appropriately. However, we will use special cases of these transformations
to establish an $A_n$ generalization of basic hypergeometric transformation formulas in the next section.
 
In the case when $m=n=1$, \eqref{IDT1} reduces to
\begin{eqnarray}\label{m1n1IDT1}
&&
{}_{4} \phi_{3}
\left[
\begin{matrix}
q^{-N}, a, b, c \\
d, e , a b c q^{1-N} / d e
\end{matrix}
; q; q
\right]
=
\frac
{(d e  / b c, d e /a c, d e/ a b )_N}
{(d e^{2}/ a b c, d e / a b c, d)_N}
\\
&& \quad \quad  \quad \quad 
\times
{}_{8} W_{7}
\left[d e^{2} q^{-1}/ a b c; 
e / a, d e q^{-1} / a b c,
e/b, e/c, q^{-N};q ; d q^N
\right].
\nonumber
\end{eqnarray}
\end{rem}

We also mention that  by letting  $e=c$ in \eqref{m1IDT1}  
and then rearranging the parameter $d$ as $Ab q^{1-N} / c$, we 
recover an $A_n$ generalization of Pfaff-Saalsch\"{u}tz summation 
formula for terminating balanced ${}_3 \phi_2$ series due to Milne
(Theorem 4.15 in \cite{Milne3})

\begin{eqnarray}\label{PSSum1}
&&
\sum_{\gamma \in {\Bbb N}^{n}}
q^{|\gamma|} 
\frac{\Delta({x} q^{\gamma})}{\Delta({x})}
\prod_{1 \le i, j \le n}
\frac{(a_j x_i / x_j)_{\gamma_i}} {(q x_i / x_j)_{\gamma_i}}
\prod_{1 \le i \le n}
\frac{(b x_i)_{\gamma_i}} 
{(c x_i)_{\gamma_i}}
\\
&& \quad \quad \quad \quad 
\times 
\frac{(q^{-N})_{|\gamma|}}
{(A b q^{1-N} /c )_{|\gamma|}}
=
\frac
{(c/ b)_N}
{(c/ A b)_N}
\prod_{ 1 \le i \le n}
\frac
{((c/ a_i) x_i )_N}
{(c x_i )_N}.
\nonumber
\end{eqnarray}

\begin{rem}
In the case when $n=1$, \eqref{PSSum1} reduces to Jackson`s
Pfaff-Saalsch\"{u}tz summation formula for terminating balanced ${}_3 \phi_2$
series (the formula (1.7.2) in \cite{GR1}) 
\begin{equation}
{}_{3} \phi_2 \left[
	\begin{matrix}
		a, b,  q^{-N}\\
 			c, ab q^{1-N} /c
\end{matrix}
;q; q
\right] 
= \frac{(c/a, c/b)_N}{(c, c/ab)_N}.
\end{equation}
\end{rem}

\begin{rem}
Similarly to the expression \eqref{HBDT1} for the balanced duality transformation formula 
\eqref{BDT1}, \eqref{IDT1} can be stated in more general form:
 
\begin{eqnarray}\label{HIDT1}
&&
\sum_{\gamma \in {\Bbb N}^n}
q^{|\gamma|} 
\frac{\Delta (x q^{\gamma})}{\Delta (x)}
\prod_{1 \le i, j \le n}
\frac{(a_j x_i / x_j)_{\gamma_i}} {(q x_i / x_j)_{\gamma_i}}
\prod_{1 \le i \le n, 1 \le k \le m}
\frac{(b_k x_i y_k)_{\gamma_i}}{(c x_i y_k)_{\gamma_i}}
\\
&& 
\quad 
\times
\frac{(q^{-N})_{|\gamma|}} 
{(A B q^{1-N} /c^m)_{|\gamma|}}
=
\
\frac{(q)_N}{( c^m / A B )_N}
\
\sum_{\delta \in {\Bbb N}^m, \ |\delta| = N}
\frac{\Delta (y q^{\delta})}{\Delta (y)}
\nonumber\\
&& 
\quad \quad 
\times
\prod_{1 \le k, l \le m}
\frac{((c / b_l^{}) y_k / y_l)_{\delta_k}}{(q y_k / y_l)_{\delta_k}}
\prod_{1 \le i \le n, 1 \le k \le m}
\frac{((c /a_i^{}) x_i y_k )_{\delta_k}}
{(c x_i y_k)_{\delta_k}}. 
\nonumber
\end{eqnarray}
Note that \eqref{HIDT1} can be obtained by taking the coefficient of $u^N$ in
the multiple basic Euler transformation formula \eqref{ETG}.
We remark that \eqref{IDT1} corresponds to the $m \ge 2$ case of \eqref{HIDT1} by rearrangement of parameters 
and in the case when $m=1$, \eqref{HIDT1} reduces to $A_n$ Pfaff-Saalsch\"{u}tz summation formula \eqref{PSSum1}.
\end{rem}


\subsection{${}_4 \phi_3$ transformation formulas of type $A$}

\medskip

\noindent
{\bf { \large Sears transformation of type $A$}}

\medskip

\begin{prop}
\begin{eqnarray}\label{ST1}
&&
\sum_{\gamma \in {\Bbb N}^n}
q^{|\gamma|} 
\frac{\Delta (x q^{\gamma})}{\Delta (x)}
\prod_{1 \le i, j \le n}
\frac{(b_j x_i / x_j)_{\gamma_i}} {(q x_i / x_j)_{\gamma_i}}
\prod_{1 \le i \le n, 1 \le k \le m}
\frac{(c_k x_i y_k )_{\gamma_i}}{(d x_i y_k)_{\gamma_i}}
\\
&& \quad \quad \times
\frac{(q^{-N}, a )_{|\gamma|}}
{(e, a B C q^{1-N}/ d^m e)_{|\gamma|}}
=
\frac{(e / a, d^m e /B C)_N}
{(e, d^m e / a B C)_N} 
\sum_{\delta \in {\Bbb N}^m}
q^{|\delta|} 
\frac{\Delta (y q^{\delta})}{\Delta (y)}
\nonumber 
\\
&& \quad \quad \quad  \times
\frac{(q^{-N}, a )_{|\delta|}}
{(q^{1-N} a / e, d^m e / B C)_{|\delta|}}
\prod_{1 \le k, l \le m}
\frac{((d / c_l^{}) y_k / y_l)_{\delta_k}}{(q y_k / y_l)_{\delta_k}}
\prod_{1 \le i \le n, 1 \le k \le m}
\frac{((d / b_i^{}) x_i y_k )_{\delta_k}}
{(d x_i y_k )_{\delta_k}}
.
\nonumber
\end{eqnarray}
\end{prop}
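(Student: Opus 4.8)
The plan is to obtain \eqref{ST1} as a confluent limit of the balanced duality transformation \eqref{BDT1}, in the same spirit as the derivations of \eqref{DT1} and \eqref{IDT1}, but now degenerating the very-well-poised structure on \emph{both} sides simultaneously. Concretely, in \eqref{BDT1} I would first perform the three substitutions $d\mapsto aq/d$, $e\mapsto aq/e$ and $f\mapsto aq/f$ (keeping the same names for the new parameters), and then send the very-well-poised parameter $a\to\infty$. The substitutions of $d,e$ are what produced the balanced left-hand factor already in \eqref{IDT1}, while the additional substitution of $f$ is precisely what will feed in the extra numerator parameter distinguishing Sears' series from the pure duality \eqref{HIDT1}.

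On the left-hand side, the limit collapses the very-well-poised factor $\prod_i(1-aq^{|\gamma|+\gamma_i}x_i)/(1-ax_i)\to q^{(n+1)|\gamma|}$ and turns the diagonal factors $\prod_j (ax_j)_{|\gamma|}/((aq/b_j)x_j)_{|\gamma|}$ into monomials, while the diagonal cross term $\prod_{i,j}(b_jx_i/x_j)_{\gamma_i}/(qx_i/x_j)_{\gamma_i}$ is untouched. Under $d\mapsto aq/d$, $e\mapsto aq/e$ the two length denominators become the finite symbols $(d)_{|\gamma|},(e)_{|\gamma|}$, while the accompanying $x$-only cross factors $(dx_i)_{\gamma_i},(ex_i)_{\gamma_i}$ (whose bases now blow up) degenerate to monomials and thus disappear. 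Under $f\mapsto aq/f$ the cross denominators tend to the common value $(dx_iy_k)_{\gamma_i}$ of the $c_k$-cross term after relabeling, and—crucially—$\mu f q^N$ tends to an $a$-independent finite value, which survives as the extra numerator symbol $(a)_{|\gamma|}$ in \eqref{ST1}. A parallel degeneration occurs on the right: here one computes $\mu\to 0$, so its very-well-poised factor tends to $1$; the parameters $aq/c_kf$ and $(aq/b_if)x_i$ tend to finite values giving (after relabeling) the diagonal $d/c_k$ and the cross $(d/b_i)x_i$ of the $\delta$-sum; and $\mu fq^N,\ \mu df/a,\ \mu ef/a$ tend to finite values producing the length factor $(q^{-N},a)_{|\delta|}/(q^{1-N}a/e,\,d^me/BC)_{|\delta|}$.

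It then remains to collapse the large product prefactor of \eqref{BDT1}: the $\prod_i$ and $\prod_k$ factors such as $(aqx_i)_N/((aq/b_i)x_i)_N\to b_i^N$ reduce to powers of $B$ and $C$, while the $y_k$-products cancel pairwise, leaving only the two-factor ratio $\frac{(e/a,\,d^me/BC)_N}{(e,\,d^me/aBC)_N}$. After this, relabeling the surviving parameters yields exactly \eqref{ST1}; the consistency of this relabeling is itself a useful check, since the value inherited by $\mu fq^N$ must appear as the \emph{same} parameter $a$ symmetrically on both sides, and the balancing built into \eqref{BDT1} forces this automatically.

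I expect the main obstacle to be purely the asymptotic bookkeeping of the $a\to\infty$ limit. One must verify that the many monomial contributions arising from the degenerating $q$-shifted factorials—the very-well-poised core, the diagonal factors $(ax_j)_{|\gamma|}$, and all the $x$-only and $y$-only cross terms that collapse—recombine \emph{exactly} into the single power $q^{|\gamma|}$ (respectively $q^{|\delta|}$) demanded by \eqref{ST1}, and that the multi-index prefactor telescopes cleanly to the stated ratio. None of these steps is conceptually deep, but each $q$-power and sign from an asymptotic $(cz)_k\sim(-cz)^k q^{\binom{k}{2}}$ must be tracked carefully so that the argument of the resulting balanced series comes out to be $q$ on both sides.
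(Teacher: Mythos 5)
Your proposal is correct and takes essentially the same route as the paper: the paper's proof of \eqref{ST1} performs exactly your substitutions $d\mapsto aq/d$, $e\mapsto aq/e$, $f\mapsto aq/f$ in \eqref{BDT1} and then relabels $f\to d$ and $def^{m}q^{N-1}/BC\to a$. The only deviation is that the paper removes the very-well-poised parameter by putting $a=0$ instead of letting $a\to\infty$; since after the substitutions both sides are rational in $a$ with finite values at $0$ and at $\infty$ (your monomial bookkeeping does check out, e.g.\ on the left the factors $q^{|\gamma|}\cdot q^{(n+1)|\gamma|}\cdot(Bq^{-n})^{|\gamma|}\cdot(C/f^{m})^{|\gamma|}\cdot(ef^{m}q^{N-1}/BC)^{|\gamma|}\cdot(eq^{N})^{-|\gamma|}$ recombine to exactly $q^{|\gamma|}$), the two degenerations produce the same limiting identity.
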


\begin{proof}
Replace the parameters in \eqref{BDT1} $d, e$ and $f$ with $aq/ d, aq/e$ and $aq/f$ respectively.
Then put $a=0$. Then rearranging parameters $f \to d$ and $ de f^m q^{N-1} / B C \to a$ leads to 
the desired identity.
\end{proof}

 In the case when $m=1$ and $y_1= 1$, \eqref{ST1} reduces to
\begin{eqnarray}\label{m1ST1}
&&
\sum_{\gamma \in {\Bbb N}^n}
q^{|\gamma|} 
\frac{\Delta (x q^{\gamma})}{\Delta (x)}
\frac{(q^{-N}, a )_{|\gamma|}}
{(e, a B c q^{1-N}/ d e )_{|\gamma|}}
\prod_{1 \le i, j \le n}
\frac{(b_j x_i / x_j)_{\gamma_i}} {(q x_i / x_j)_{\gamma_i}}
\prod_{1 \le i \le n}
\frac{(c x_i )_{\gamma_i}}{(d x_i)_{\gamma_i}}
\\
&& \quad \quad \quad \quad 
= \
\frac{(e / a, d e / B c)_N}
{(e, d e / a B c)_N} \
\
{}_{n+3} \phi_{n+2}
\left[
	\begin{matrix}
		q^{-N}, a, \{ (d / b_i) x_i \}_n, d /c \\
		q^{1-N} a / e, \{d x_i\}_n, 
		d e /B c 
	\end{matrix}; q, q
\right]
. \nonumber
\end{eqnarray}

\begin{rem}
The multiple Sears transformation formula \eqref{ST1} was originally established in \cite{Kaji1}
(the formula (7.1) in \cite{Kaji1}). 
In the case when $m=n=1$ and $x_1 = y_1 = 1$, \eqref{ST1} reduces to the Sears transformation formula 
for terminating balanced ${}_4 \phi_3 $ series:

\begin{eqnarray}\label{SearsT1}
{}_{4} \phi_{3} 
\left[
	\begin{matrix}
		q^{-N}, a, b, c \\
		d, e, a b c q^{1-N} / d e
	\end{matrix};
q, q
\right]
&=& 
\frac{(e/a, d e / b c)_N}
{(e, d e / a b c)_N}
{}_4 \phi_3
\left[
	\begin{matrix}
		q^{-N}, a, d/b, d/c \\
		d, a q^{1-N}/e, d e / b c 
	\end{matrix};
q, q
\right].
\end{eqnarray}
 Further informations for multiple Sears transformation \eqref{ST1} can 
also be found in \cite{KajiS}.
\end{rem}

We shall add a few remarks that we have missed in our previous 
works in \cite{Kaji1} and \cite{KajiS}.

Let $N$ tend to infinity in \eqref{ST1}, we get the following 
transformation formula for multiple nonterminating ${}_3 \phi_2$
series of type $A$:

\begin{cor}
\begin{eqnarray}\label{NT32}
&&
\sum_{\gamma \in {\Bbb N}^n}
\left(  \frac{d^m e}{a B C} \right)^{|\gamma|} 
\frac{\Delta (x q^{\gamma})}{\Delta (x)}
\prod_{1 \le i, j \le n}
\frac{(b_j x_i / x_j)_{\gamma_i}} {(q x_i / x_j)_{\gamma_i}}
\prod_{1 \le i \le n, 1 \le k \le m}
\frac{(c_k x_i y_k )_{\gamma_i}}{(d x_i y_k)_{\gamma_i}}
\\
&&
\quad 
\times \
\frac{( a )_{|\gamma|}}
{(e)_{|\gamma|}}
\ = \
\frac{(e / a, d^m e /B C)_\infty}
{(e, d^m e / a B C)_\infty} 
\sum_{\delta \in {\Bbb N}^m}
\left( \frac{e}{a} \right)^{|\delta|} 
\frac{\Delta (y q^{\delta})}{\Delta (y)}
\nonumber\\
&& 
\quad \quad 
\times
\frac{( a )_{|\delta|}}
{(d^m e / B C)_{|\delta|}}
\prod_{1 \le k, l \le m}
\frac{((d / c_l^{}) y_k / y_l)_{\delta_k}}{(q y_k / y_l)_{\delta_k}}
\prod_{1 \le i \le n, 1 \le k \le m}
\frac{((d / b_i^{}) x_i y_k)_{\delta_k}}
{(d x_i y_k)_{\delta_k}}
\nonumber
\end{eqnarray}
holds if it satisfy the convergence condition
$\max (|d^m e / a BC|, |e/a| ) < 1$.
\end{cor}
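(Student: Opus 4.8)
The plan is to derive \eqref{NT32} as the $N \to \infty$ limit of the multiple Sears transformation \eqref{ST1}, which is already at our disposal. Both sides of \eqref{ST1} are terminating because of the factors $(q^{-N})_{|\gamma|}$ and $(q^{-N})_{|\delta|}$; letting $N \to \infty$ turns them into the genuinely infinite series appearing in \eqref{NT32}, and the whole issue is to (i) identify the termwise limits and (ii) justify passing the limit under the two summation signs.

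First I would compute the termwise limits. The only $N$-dependent pieces are the prefactor $\frac{(e/a, d^m e/BC)_N}{(e, d^m e/aBC)_N}$ and, inside the two sums, the ratios $q^{|\gamma|}\frac{(q^{-N})_{|\gamma|}}{(aBC q^{1-N}/d^m e)_{|\gamma|}}$ and $q^{|\delta|}\frac{(q^{-N})_{|\delta|}}{(q^{1-N}a/e)_{|\delta|}}$. The prefactor converges directly to $\frac{(e/a, d^m e/BC)_\infty}{(e, d^m e/aBC)_\infty}$ by the definition of the infinite $q$-shifted factorial. For the summand ratios I would use the reflection identity $(q^{-N};q)_k = (-1)^k q^{\binom{k}{2}-kN}(q;q)_N/(q;q)_{N-k}$ (and its analogue for the denominator) to rewrite, with $k=|\gamma|$,
\[
q^{k}\frac{(q^{-N})_{k}}{(tq^{1-N})_{k}} = t^{-k}\,\frac{(q^{N-k+1};q)_{k}}{(t^{-1}q^{N-k};q)_{k}}, \qquad t = \frac{aBC}{d^m e}.
\]
Since $|q|<1$, both $q$-factorials on the right tend to $1$ as $N \to \infty$, so this ratio converges to $t^{-k} = (d^m e/aBC)^{|\gamma|}$; the same manipulation with $t = a/e$ gives $(e/a)^{|\delta|}$ for the second sum. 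These limits reproduce exactly the arguments $(d^m e/aBC)^{|\gamma|}$ and $(e/a)^{|\delta|}$ in \eqref{NT32}, while all remaining factors are independent of $N$ and match verbatim.

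The main obstacle is the interchange of the limit with the infinite sums, and here the rewriting above does double duty. For $N \ge k$ every argument $q^{N-k+\ell}$ occurring in the two $q$-factorials has exponent $\ge 0$, whence $\prod_{\ell\ge 1}(1+|q|^{N-k+\ell}) \le \prod_{\ell\ge 1}(1+|q|^{\ell})$ and $\prod_{\ell\ge 0}(1 - |t|^{-1}|q|^{N-k+\ell}) \ge \prod_{\ell\ge 0}(1 - |t|^{-1}|q|^{\ell}) > 0$, the last step using $|t|>1$, which is equivalent to the hypothesis $|d^m e/aBC|<1$. Consequently the ratio is bounded by a constant $C_1$ independent of both $N$ and $k$, giving the uniform estimate $\bigl|\,q^{|\gamma|}(q^{-N})_{|\gamma|}/(tq^{1-N})_{|\gamma|}\bigr| \le C_1\,|d^m e/aBC|^{|\gamma|}$, and likewise $\le C_1'\,|e/a|^{|\delta|}$ for the $\delta$-sum. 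Multiplying by the ($N$-independent, uniformly bounded) remaining factors produces summable dominating functions precisely under the stated convergence condition $\max(|d^m e/aBC|,|e/a|)<1$, since $\sum_{\gamma}|z|^{|\gamma|}$ converges for $|z|<1$ (the number of $\gamma$ with $|\gamma|=k$ is polynomial in $k$). Tannery's theorem, the dominated convergence principle for series, then permits taking the limit inside both sums, and assembling the three limits yields \eqref{NT32}.
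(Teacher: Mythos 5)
Your proposal is correct and follows exactly the paper's route: the paper obtains \eqref{NT32} by letting $N\to\infty$ in the multiple Sears transformation \eqref{ST1}, with the termwise limits $q^{|\gamma|}(q^{-N})_{|\gamma|}/(aBCq^{1-N}/d^m e)_{|\gamma|}\to(d^m e/aBC)^{|\gamma|}$ and $q^{|\delta|}(q^{-N})_{|\delta|}/(q^{1-N}a/e)_{|\delta|}\to(e/a)^{|\delta|}$ producing the stated arguments. The only difference is that you spell out the Tannery/dominated-convergence justification that the paper deliberately omits, delegating it (in its remark on convergence of multiple series) to the analogous arguments in Milne--Newcomb; your uniform bounds under $\max(|d^m e/aBC|,|e/a|)<1$ supply precisely that missing detail.
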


\begin{rem}
\eqref{NT32} was already established as the equation (3.2) in 
\cite{KajiS}. In the case when $m=n=1$ and $x_1 = y_1= 1$, \eqref{NT32} reduces to
the following nonterminating ${}_3 \phi_2$ transformation formula
\begin{eqnarray}\label{1DNT32}
{}_{3} \phi_{2} 
\left[
	\begin{matrix}
		 a, b, c \\
		d, e
	\end{matrix};
q, \frac{d e}{a b c}
\right]
&=& 
\frac{(e/a, d e / b c)_\infty}
{(e, d e / a b c)_\infty}
{}_3 \phi_2
\left[
	\begin{matrix}
		 a, d/b, d/c \\
		d,  d e / b c 
	\end{matrix};
q, \frac{e}{a}
\right].
\end{eqnarray}
\end{rem}

In the case when $m=1$ and $y_1 = 1$, \eqref{NT32} reduces to 

\begin{eqnarray}\label{m1-NT32}
&&
\sum_{\gamma \in {\Bbb N}^n}
\left(  \frac{d e}{a B c} \right)^{|\gamma|} 
\frac{\Delta (x q^{\gamma})}{\Delta (x)}
\frac{( a )_{|\gamma|}}
{(e)_{|\gamma|}}
\prod_{1 \le i, j \le n}
\frac{(b_j x_i / x_j)_{\gamma_i}} {(q x_i / x_j)_{\gamma_i}}
\prod_{1 \le i \le n}
\frac{(c x_i )_{\gamma_i}}{(d x_i)_{\gamma_i}}
\\
&&
\quad 
=
\frac{(e / a, d e /B c)_\infty}
{(e, d e / a B c)_\infty} 
{}_{n+2} \phi_{n+1}
\left[
\begin{matrix}
d/c,  a, \{ (d / b_i) x_i \}_n   \\
d e / B c,  \{ d x_i \}_n
\end{matrix}
;
q;
\frac{e}{a}
\right]
\nonumber
\end{eqnarray}
holds if it satisfy the convergence condition
$\max (|d^m e / a BC|, |e/a| ) < 1$.

Here we give a remark on the convergence of  multiple series.

\begin{rem}{ \bf (Convergence of the multiple series)}
In the course of obtaining an infinite multiple sum identity such as \eqref{NT32}, 
one needs to ensure the limiting procedure. To justify the process, one is 
required to use the dominated convergence theorem. 
Furthermore, to find the convergence condition of the dominated series such as the series 
in \eqref{NT32}, one can quote the ratio test for multiple power series
in classical work by J.Horn \cite{Horn}. 
Since the details of such justifications in this paper are all in the same line as the corresponding 
discussions in Milne-Newcomb \cite{MilNew1} (see also Milne \cite{Milne3} and Milne-Newcomb \cite{MilNew2}),
 we shall omit in this paper.    
\end{rem}

We shall propose the following special case, namely a transformation formula 
between  $A_n$ ${}_{m+2} \phi_{m+1}$ series 
with integer parameter differences and $A_m$ terminating balanced ${}_{n+2} \phi_{n+1}$ series:

\begin{eqnarray}\label{MK-PS}
&&
\sum_{\gamma \in {\Bbb N}^n}
\left(  q^{1- |M |}  /A\right)^{|\gamma|} 
\frac{\Delta (x q^{\gamma})}{\Delta (x)}
\prod_{1 \le i, j \le n}
\frac{(a_j x_i / x_j)_{\gamma_i}} {(q x_i / x_j)_{\gamma_i}}
\\
&&
\quad 
\times
\frac{( b )_{|\gamma|}}
{( b q )_{|\gamma|}}
\prod_{1 \le i \le n, 1 \le k \le m}
\frac{( c q^{m_k} x_i y_k )_{\gamma_i}}{(c x_i y_k)_{\gamma_i}}
\nonumber\\
&& 
\quad \quad 
=
\frac{( q,  q^{1-|M|} b/A )_\infty}
{( b,  q^{1-|M|} /A )_\infty}
\sum_{\delta \in {\Bbb N}^m}
q^{|\delta|} 
\frac{\Delta (y q^{\delta})}{\Delta (y)}
\prod_{1 \le k, l \le m}
\frac{( q^{- m_l} y_k / y_l)_{\delta_k}}{(q y_k / y_l)_{\delta_k}}
\nonumber\\
&& \quad \quad \quad 
\times
\frac{( b)_{|\delta|}}
{(  q^{1-|M|}  b / A)_{|\delta|}}
\prod_{1 \le i \le n, 1 \le k \le m}
\frac{((c / a_i^{}) x_i y_k)_{\delta_k}}
{(c x_i y_k)_{\delta_k}}.
\nonumber
\end{eqnarray}

In the case when $n=1$ and $x_1 = 1$,  the multiple series in the right hand side in \eqref{MK-PS} can be summed by
the following $A_n$ generalization of $q$-Pfaff-Saalsch\"{u}tz summation formula
\begin{eqnarray}\label{R-PSSum1}
&&
\sum_{\gamma \in {\Bbb N}^{n}}
q^{|\gamma|} 
\frac{\Delta({x} q^{\gamma})}{\Delta({x})}
\prod_{1 \le i, j \le n}
\frac{(q^{- m_j} x_i / x_j)_{\gamma_i}} {(q x_i / x_j)_{\gamma_i}}
\prod_{1 \le i \le n}
\frac{(b x_i)_{\gamma_i}} 
{(c x_i)_{\gamma_i}}
\\
&&  \quad \quad \quad  \times \
\frac{(a)_{|\gamma|}}
{(a b q^{1-|M|} /c )_{|\gamma|}}
\ = \
\frac
{(c/ b)_{|M|}}
{(c/ a b)_{|M|}}
\prod_{ 1 \le i \le n}
\frac
{((c/ a) x_i )_{m_i}}
{(c x_i )_{m_i}},
\nonumber
\end{eqnarray}
which can be obtained from \eqref{PSSum1} by elementary polynomial argument (see the proof of Corollary 4.1),
to recover Gasper's $q$-analogue of Minton-Karlsson summation formula for ${}_{n+2} \phi_{n+1}$
series \cite{Gas1}:

\begin{eqnarray}\label{MK-SumB1}
&&
{}_{n+2} \phi_{n+1}
\left[
\begin{matrix}
a,  b, \{ c q^{m_i} x_i \}_n   \\
b q,  \{ c x_i \}_n
\end{matrix}
;
q;
a^{-1} q^{1-|M|}
\right]
\ = \
b^{|M|} 
\
\frac
{(q, b q / a)_\infty}
{(b q, q/a )_\infty}
\prod_{1 \le i \le n}
\frac{((c/b ) x_i)_{m_i}}
{(c x_i)_{m_i}}. 
\end{eqnarray}

In the case when $m=1$ and $y_1 = 1$,  \eqref{MK-PS} reduces to:

\begin{eqnarray}\label{m1-MK-PS}
&&
\sum_{\gamma \in {\Bbb N}^n}
\left(  q^{1- m} /A\right)^{|\gamma|} 
\frac{\Delta (x q^{\gamma})}{\Delta (x)}
\prod_{1 \le i, j \le n}
\frac{(a_j x_i / x_j)_{\gamma_i}} {(q x_i / x_j)_{\gamma_i}}
\prod_{1 \le i \le n}
\frac{( c q^{m} x_i )_{\gamma_i}}{(c x_i)_{\gamma_i}}
\\
&& \quad \quad \quad \times
\frac{( b )_{|\gamma|}}
{( b q )_{|\gamma|}}
\ = \
\frac{( q,  q^{1-M} b / A )_\infty}
{( b,  q^{1-M} / A )_\infty}
{}_{n+2} \phi_{n+1}
\left[
\begin{matrix}
q^{-M},  b, \{ (c/ a_i) x_i \}_n   \\
q^{1-M} b /A,  \{ c x_i \}_n
\end{matrix}
;
q;
q
\right].
\nonumber
\end{eqnarray}

In the case when $m=n=1$ and $x_1 = y_1 = 1$,  \eqref{MK-PS} reduces to:

\begin{eqnarray}\label{m1n1-MK-PS}
{}_{3} \phi_{2}
\left[
\begin{matrix}
a,  b,  c q^M   \\
b q, c
\end{matrix}
;
q;
q^{1-M} / a
\right]
&=&
b^{M} 
\
\frac
{(q, b q / a, c / b)_\infty}
{(b q, q / a, c )_\infty}.
\end{eqnarray}

\medskip

It may be of worth to note that,  furthermore, if we replace $e \to a B C u / d^m$ and take 
limit $a \to \infty$ in \eqref{NT32}, we recover  multiple basic Euler transformation formula of type $A$
\eqref{ETG}

\begin{eqnarray*}
&&
\sum_{\gamma \in {\Bbb N}^n}
u^{|\gamma|} 
\frac{\Delta (x q^{\gamma})}{\Delta (x)}
\prod_{1 \le i, j \le n}
\frac{(a_j x_i / x_j)_{\gamma_i}} {(q x_i / x_j)_{\gamma_i}}
\prod_{1 \le i \le n, 1 \le k \le m}
\frac{(b_k x_i y_k)_{\gamma_i}}{(c x_i y_k)_{\gamma_i}}
\\
&&
\quad =
\frac{(A Bu/ c^m)_\infty}
{(u)_\infty}
\sum_{\delta \in {\Bbb N}^m}
 \left( \frac{A B u}{c^m} \right)^{|\delta|} 
\frac{\Delta (y q^{\delta})}{\Delta (y)}
\nonumber\\
&& 
\quad \quad 
\times
\prod_{1 \le k, l \le m}
\frac{((c / b_l^{}) y_k / y_l)_{\delta_k}}{(q y_k / y_l)_{\delta_k}}
\prod_{1 \le i \le n, 1 \le k \le m}
\frac{((c /a_i^{}) x_i y_k )_{\delta_k}}
{(c x_i y_k )_{\delta_k}},
\nonumber
\end{eqnarray*}
when $ max(|u|, \displaystyle{ \left| AB u / c^m \right|} ) <1 $.
\vspace{12mm}

\noindent
{\bf { \large Reversing version}}

\medskip

\begin{prop}

Under the balancing condition
\begin{equation}
a^{m} B c q^{1-N} = d E f,
\end{equation}
we have the following:

\begin{eqnarray}\label{ST2}
&&
\sum_{\gamma \in {\Bbb N}^{n}}
q^{|\gamma|} 
\frac{\Delta({x} q^{\gamma})}{\Delta({x})}
\prod_{1 \le i, j \le n}
\frac{(b_j x_i / x_j)_{\gamma_i}} {(q x_i / x_j)_{\gamma_i}}
\prod_{1 \le i\le n}
\frac{(c x_i)_{\gamma_i}} 
{(d  x_i)_{\gamma_i}}
\prod_{1 \le k \le m}
\frac{( a y_k)_{|\gamma|}}
{( e_k y_k)_{|\gamma|}}
\\
&&
\quad 
\times
\frac{(q^{-N})_{|\gamma|}}
{(f)_{|\gamma|}}
\ = \
\frac
{( E f /a^m B)_N}
{(f )_N}
\prod_{ 1 \le k \le m}
\frac
{( a y_k )_N}
{(e_k  y_k )_N}
\prod_{ 1 \le i \le n}
\frac
{((  E f / a^m  c ) z_i )_N}
{(( E f / a^m B c)  z_i )_N}
\nonumber\\
&& 
\quad \quad 
\times
\sum_{\delta \in {\Bbb N}^{m} }
q^{|\delta|}
\frac{\Delta ({w} q^{\delta})}{\Delta ({w})}
\prod_{1 \le k, l \le m}
\frac{((e_l/a) w_k / w_l)_{\delta_k}}{(q w_k / w_l)_{\delta_k}}
\prod_{1 \le k \le m}
\frac{(( f/ a) w_k )_{\delta_k}}
{(( d E f / a^{m+1} B c) w_k )_{\delta_k}}
\nonumber\\
&&
\quad \quad \quad 
\times
\frac{(q^{-N})_{|\delta|}}{(E f /  a^m  B )_{|\delta|}}
\prod_{1 \le i \le n}
\frac{( ( E f /a^m b_i c) z_i)_{|\delta|}}
{( ( E f /a^m c) z_i)_{|\delta|}}
\nonumber
\end{eqnarray}
where $ z_i  = b_i / B x_i, \ ( 1 \le i \le n)$ and 
$ w_k   =  y_k^{-1} , \ ( 1 \le k \le m)$ respectively.
\end{prop}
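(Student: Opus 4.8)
The phrase \emph{reversing version}, together with the reflected variables $z_i=b_i/(Bx_i)$ and $w_k=y_k^{-1}$ appearing in the statement, signals that \eqref{ST2} should be read as the image of the Sears transformation \eqref{ST1} under reversal of the order of summation. Both the $A_n$ sum over $\gamma$ on the left of \eqref{ST1} and the $A_m$ sum over $\delta$ on its right terminate, the former because of $(q^{-N})_{|\gamma|}$ and the latter because of $(q^{-N})_{|\delta|}$; hence each is a finite multiple series of total degree $N$ that may be rewritten by sending the summation index to its complement. The plan is to carry out this reversal on both sides, tracking how the reflection $x_i\mapsto z_i$, $y_k\mapsto w_k$ is forced by the bookkeeping, and then to read off \eqref{ST2}.

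First I would reverse each multiple series termwise. For every $q$-shifted factorial I would use the standard reversal identity $(a)_{N-k}=\dfrac{(a)_N}{(q^{1-N}/a)_k}\bigl(-q/a\bigr)^k q^{\binom{k}{2}-Nk}$, apply it to each of the $|\gamma|$-global and $\gamma_i$-local Pochhammer factors, and simultaneously rewrite the Vandermonde ratio $\Delta(xq^\gamma)/\Delta(x)$ and the weight $q^{|\gamma|}$ under the complementation. The decisive phenomenon is a \emph{local--global interchange}: the $\gamma_i$-local column factors $\prod_{i,k}(c_kx_iy_k)_{\gamma_i}/(dx_iy_k)_{\gamma_i}$ of \eqref{ST1} turn into the $|\gamma|$-global factors $\prod_k (ay_k)_{|\gamma|}/(e_ky_k)_{|\gamma|}$ of \eqref{ST2}, while conversely the $|\gamma|$-global pair $(a)_{|\gamma|}/(aBCq^{1-N}/d^m e)_{|\gamma|}$ condenses into the single $\gamma_i$-local block $\prod_i(cx_i)_{\gamma_i}/(dx_i)_{\gamma_i}$, and $(q^{-N})_{|\gamma|}/(e)_{|\gamma|}$ becomes $(q^{-N})_{|\gamma|}/(f)_{|\gamma|}$. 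Performing the same operation on the $\delta$-sum converts the $x_i$-dependent $\delta_k$-local factors $\prod_{i,k}((d/b_i)x_iy_k)_{\delta_k}/(dx_iy_k)_{\delta_k}$ into the $z_i$-dependent $|\delta|$-global factors $\prod_i((Ef/a^mb_ic)z_i)_{|\delta|}/((Ef/a^mc)z_i)_{|\delta|}$, and likewise relocates the $a$-factors into the $\delta_k$-local positions $(f/a)w_k$ and $(e_l/a)w_k/w_l$.

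Once the two reversals are in place I would gather all the accumulated scalar factors — the $\bigl(-q/a\bigr)^k q^{\binom{k}{2}-Nk}$ contributions, the power of $q$ from the weight, and the sign and degree contributions from the Vandermonde — and check that, after the balancing relation is imposed, they collapse into exactly the prefactor $\dfrac{(Ef/a^mB)_N}{(f)_N}\prod_k\dfrac{(ay_k)_N}{(e_ky_k)_N}\prod_i\dfrac{((Ef/a^mc)z_i)_N}{((Ef/a^mBc)z_i)_N}$ recorded in the statement. The balancing condition $a^mBcq^{1-N}=dEf$ is precisely what is needed for the reversed argument to remain $q$ and for the reversed series to be of the $A_m$ $W$-type displayed on the right. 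As a consistency check I would specialize to $n=m=1$, $x_1=y_1=1$, where $z_1=w_1=1$, the local--global distinction disappears, and \eqref{ST2} collapses to the statement that the balanced terminating ${}_4\phi_3$ of \eqref{SearsT1} equals the prefactor times its own reversal.

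Alternatively, and in keeping with the template used for \eqref{ST1}, one can obtain \eqref{ST2} straight from \eqref{BDT1}: substitute $d,e,f\mapsto aq/d,\,aq/e,\,aq/f$, compose with the reflection $y_k\mapsto y_k^{-1}$ of the $W^{m,n+2}$ series (which is what produces the variables $w_k$ and, through the $x_i$-dependent column parameters $(aq/b_if)x_i$ and $(\mu f/a)x_i^{-1}$, the variables $z_i$), then let $a\to 0$ and relabel. The main obstacle in either route is the same: executing the multidimensional reversal correctly, above all justifying the local--global interchange, which is invisible in dimension one but genuinely reshapes the summand when $m,n\ge 2$, and then verifying that the swarm of reversal prefactors assembles into the single closed product claimed. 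Controlling the divergent parameter $\mu$ in the $a\to0$ limit, as already required for \eqref{ST1}, is the only other delicate point.
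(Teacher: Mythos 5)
There is a genuine gap in both routes you offer. Your primary route --- reversing the order of summation termwise in \eqref{ST1} --- fails at its very first step when $n\ge 2$: the left-hand sum of \eqref{ST1} terminates \emph{triangularly}, i.e.\ it runs over the simplex $\{\gamma\in\mathbb{N}^n:\ |\gamma|\le N\}$, and this index set admits no complementation map. Level by level, $\{\,|\gamma|=K\,\}$ has $\binom{K+n-1}{n-1}$ points while $\{\,|\gamma|=N-K\,\}$ has $\binom{N-K+n-1}{n-1}$, so there is no bijection ``sending the summation index to its complement,'' and the one-variable reversal identity for $(a)_{N-k}$ has nothing to be applied to. The paper's remark that \eqref{m1n1ST2} is the reversal of \eqref{SearsT1} concerns only the case $m=n=1$; in higher dimension the local--global interchange you describe is real as a \emph{comparison} of the shapes of \eqref{ST1} and \eqref{ST2}, but it cannot be produced by any termwise complementation, and you supply no other mechanism for it. (A reversal of this kind would be available for \emph{rectangular} termination, $\gamma_i\mapsto m_i-\gamma_i$, but both sums here are triangular --- precisely the distinction the paper draws in Section 4.)

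Your alternative route is the right kind of argument --- the paper does prove \eqref{ST2} by degenerating \eqref{BDT1} at $a=0$ --- but with the wrong substitution. Replacing $d,e,f$ by $aq/d,\ aq/e,\ aq/f$ and then setting $a=0$ is exactly the paper's derivation of \eqref{ST1}; composing afterwards with $y_k\mapsto y_k^{-1}$ is a mere relabeling of the free variables $y_k$ and returns \eqref{ST1} in disguise, not a new identity. What actually produces \eqref{ST2} is an \emph{asymmetric} choice of which parameters to invert before the degeneration: in \eqref{BDT1} one rewrites only $c_k\to aq/c_k$ $(1\le k\le m)$ and $e\to aq/e$, then puts $a=0$, and finally relabels $f\to a$, $d\to c$, $Bdf^mq^{1-N}/Ce\to d$, $c_k\to e_k$, $e\to f$. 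With this choice the reflected variables $w_k=y_k^{-1}$ and $z_i=b_i/Bx_i$ emerge automatically from the $W^{m,n+2}$ side, and no delicate control of a divergent $\mu$ is needed: after the substitution $\mu$ carries a positive power of $a$ and simply vanishes at $a=0$. Your $n=m=1$ consistency check is correct, but that is also the only case in which your reversal step is meaningful.
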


\begin{proof}
Rewrite the parameters $c_k \to aq/ c_k \ ( 1 \le k \le m)$ and  $e \to aq/ e$ 
in \eqref{BDT1}. Then  put $a=0$. Finally by rearranging the parameters
$ f \to a, \ d \to c, \ {B d f^m q^{1-N}}/ {C e} \to d,  
\ c_k \to e_k \quad (1 \le k \le m),  \ e \to f $, we arrive at the desired identity.
\end{proof}

 In the case when $m=1$ and $y_1=1$, \eqref{ST2} reduces to

\begin{eqnarray}\label{m1ST2}
&&
\sum_{\gamma \in {\Bbb N}^{n}}
q^{|\gamma|} 
\frac{\Delta({x} q^{\gamma})}{\Delta({x})}
\prod_{1 \le i, j \le n}
\frac{(b_j x_i / x_j)_{\gamma_i}} {(q x_i / x_j)_{\gamma_i}}
\prod_{1 \le i\le n}
\frac{(c x_i)_{\gamma_i}} 
{( d x_i)_{\gamma_i}}
\\
&& 
\quad \quad
\times
\frac{(q^{-N}, a)_{|\gamma|}}
{(e, f)_{|\gamma|}}
\ = \
\frac
{(e f / a B, a)_N}
{(e, f)_N}
\prod_{ 1 \le i \le n}
\frac
{(( e f / a c )z_i )_N}
{((e f / a B c) z_i )_N}
\nonumber\\
&& 
\quad \quad \quad \quad \quad 
\times
{}_{n+3} \phi_{n+2}
\left[
\begin{matrix}
q^{-N}, e/ a, f / a, \{( e f / a b_i c) z_i\}_{n}
\\
 def/ a^2 B c, e f / a B, \{( e f / a c) z_i\}_{n}
\end{matrix}
; q; q
\right],
\nonumber
\end{eqnarray}
when the following balancing condition holds:
\begin{equation}
a  B c q^{1-N} = d e f, 
\end{equation}
and where $z_i = b_i / B x_i, i= 1, 2, \cdots n$.
By letting $f=a$ in \eqref{m1ST2} and relabeling the parameters 
appropriately, we also recover the $A_n$ Pfaff-Saalsch\"{u}tz summation 
\eqref{PSSum1}.

\begin{rem}
\eqref{ST2} has already appeared as the 2nd Sears transformation formula 
(Proposition 7.2.) in \cite{Kaji1}, up to relabeling parameters.
 In the case when $m=n=1$ and $x_1 = y_1 =1$, \eqref{ST2} reduces to 
a transformation formula for terminating balanced ${}_ 4 \phi_3 $ series
\begin{eqnarray}\label{m1n1ST2}
&&
{}_{4} \phi_{3}
\left[
\begin{matrix}
q^{-N}, a, b, c 
\\
d, e, f
\end{matrix}
; q; q
\right]
\ = \ 
\frac
{(e f / a b, e f/ a c, a)_N}
{(e, f, e f / a b c)_N}
\\
&&
\quad \quad \quad \quad \quad \quad \quad \quad \quad \quad
\times
{}_{4} \phi_{3}
\left[
\begin{matrix}
q^{-N},e f / a b c, e/a, f/a
\\
def/ a^2 b c, e f / a b, e f / a c
\end{matrix}
; q; q
\right]
\nonumber
\end{eqnarray}
when
\begin{equation}
a  b c q^{1-N} = d e f.  
\end{equation}
Note that this is obtained by reversing the order of the summation of 
the Sears transformation \eqref{SearsT1} and is also verified by iterating 
Sears transformation twice in a proper fashion. 
\end{rem}

Let $N \to \infty$ in \eqref{ST2}. Then we have

\begin{eqnarray}\label{Hall1}
&&
\sum_{\gamma \in {\Bbb N}^{n}} 
x_1^{- \gamma_1} \cdots  x_n^{- \gamma_n}
\left( \frac{E f}{a^m B c} \right)^{|\gamma|} 
q^{ e_2 ( \gamma )} \
\frac{\Delta({x} q^{\gamma})}{\Delta({x})}
\prod_{1 \le i, j \le n}
\frac{(b_j x_i / x_j)_{\gamma_i}} {(q x_i / x_j)_{\gamma_i}}
\prod_{1 \le i\le n}
{(c x_i)_{\gamma_i}} 
\\
&& 
\quad 
\times
\left[{(f)_{|\gamma|}}\right]^{-1}
\prod_{1 \le k \le m}
\frac{( a y_k)_{|\gamma|}}
{( e_k y_k)_{|\gamma|}}
\nonumber\\
&&
\quad \quad 
=
\frac
{( E f /a^m B)_\infty}
{(f )_\infty}
\prod_{ 1 \le k \le m}
\frac
{( a y_k )_\infty}
{(e_k  y_k )_\infty}
\prod_{ 1 \le i \le n}
\frac
{((  E f / a^m  c ) z_i )_\infty}
{(( E f / a^m B c)  z_i )_\infty}
\nonumber\\
&&
\quad \quad \quad 
\times
\sum_{\delta \in {\Bbb N}^{m} }
w_1^{-\delta_1} \cdots w_m^{-\delta_m} a^{|\delta|}
q^{ e_2 ( \delta )} \
\frac{\Delta ({w} q^{\delta})}{\Delta ({w})}
\prod_{1 \le k, l \le m}
\frac{((e_l/a) w_k / w_l)_{\delta_k}}{(q w_k / w_l)_{\delta_k}}
\prod_{1 \le k \le m}
{(( f/ a) w_k )_{\delta_k}}
\nonumber\\
&& \quad \quad \quad \quad 
\times
\left[{(E f /  a^m  B )_{|\delta|}}\right]^{-1}
\prod_{1 \le i \le n}
\frac{( ( E f /a^m b_i c) z_i)_{|\delta|}}
{( ( E f /a^m c) z_i)_{|\delta|}}, 
\nonumber
\end{eqnarray}
where $ z_i  = b_i / B x_i, \ ( 1 \le i \le n)$, 
$ w_k   =  y_k^{-1} , \ ( 1 \le k \le m)$ respectively 
and $e_2( \gamma )$ is the second elementary symmetric function 
of the variable $\gamma = ( \gamma_1, \gamma_2, \cdots , \gamma_n )$.

The case $m=1$ and $y_1 = 1$ of \eqref{Hall1} is
\begin{eqnarray}\label{m1Hall1}
&&
\sum_{\gamma \in {\Bbb N}^{n}}
x_1^{- \gamma_1} \cdots  x_n^{- \gamma_n}
\left( \frac{e f}{a B c}\right)^{|\gamma|} 
q^{ e_2 ( \gamma )} \
\frac{\Delta({x} q^{\gamma})}{\Delta({x})}
\prod_{1 \le i, j \le n}
\frac{(b_j x_i / x_j)_{\gamma_i}} {(q x_i / x_j)_{\gamma_i}}
\\
&& 
\quad \quad 
\times 
\frac{( a)_{|\gamma|}}
{(e, f)_{|\gamma|}}
\prod_{1 \le i\le n}
{(c x_i )_{\gamma_i}}
\ = \
\frac
{( e f /a B, a)_\infty}
{(e, f)_\infty}
\prod_{ 1 \le i \le n}
\frac
{((e f / a c)z_i )_\infty}
{((e f /a B c) z_i)_\infty}
\nonumber \\
&& 
\quad  \quad \quad \quad \quad \quad
\times 
{}_{n+2} \phi_{n+1}
\left[
\begin{matrix}
 e/ a, f/ a, \{( e f /a B c) z_i \}_{n}
\\
 e f / a B, \{( e f/ a c) z_i \}_{n}
\end{matrix}
; q; a
\right], 
\nonumber 
\end{eqnarray}
where $z_i = b_i / B x_i$ for $ i = 1, \cdots, n$.
We mention that one yield an $A_n$ Gauss summation theorem by putting $f=a$ and relabeling the parameters: 
\begin{eqnarray}\label{2ndGaussSum}
&&
\sum_{\gamma \in {\Bbb N}^{n}}
x_1^{- \gamma_1} \cdots  x_n^{- \gamma_n}
\left( \frac{e f}{a B c}\right)^{|\gamma|} 
q^{ e_2 ( \gamma )} \
\frac{\Delta({x} q^{\gamma})}{\Delta({x})}
\prod_{1 \le i, j \le n}
\frac{(b_j x_i / x_j)_{\gamma_i}} {(q x_i / x_j)_{\gamma_i}}
\\
&& 
\quad \quad 
\times 
{(c)_{|\gamma|}}^{-1}
\prod_{1 \le i\le n}
{(a x_i )_{\gamma_i}}
\ = \
\frac
{( c / B)_\infty}
{(c)_\infty}
\prod_{ 1 \le i \le n}
\frac
{((c B / a b_i ) x_i^{-1} )_\infty}
{((c /a b_i) x_i^{-1})_\infty}.
\nonumber 
\end{eqnarray}

\begin{rem}
In the case when  $m=n=1$ and $x_1 = y_1 = 1$, \eqref{Hall1} reduces to ${}_3 \phi_2$ transformation 
formula
\begin{eqnarray}\label{m1n1Hall1}
&&
{}_{3} \phi_{2}
\left[
\begin{matrix}
a, b, c
\\
 e, f 
\end{matrix}
; q; \frac{e f}{a b c}
\right]
\ = \
\frac
{(e f / a b, e f/ a c, a )_\infty}
{(e, f, e f / a b c)_\infty}
{}_{3} \phi_{2}
\left[
\begin{matrix}
e/a, f/a, e f / a b c  
\\
e f / a b, e f / ac 
\end{matrix}
; q; a
\right].
\nonumber
\end{eqnarray}
In the case when  $mn=1$ and $x_1 = 1$, \eqref{2ndGaussSum} reduces to the basic Gauss summation formation formula 
for ${}_2 \phi_1$ 
\begin{eqnarray}\label{n1GaussSum}
&&
{}_{2} \phi_{1}
\left[
\begin{matrix}
a, b
\\
 c, 
\end{matrix}
; q; \frac{c}{a b}
\right]
\ = \
\frac
{(c/ a, c/ b)_\infty}
{(c, c / a b )_\infty}
.
\nonumber
\end{eqnarray}
We also remark that \eqref{Hall1} and \eqref{2ndGaussSum} have already appeared in our previous work 
\cite{KajiR}. But they contain errors, so we restate them here.
\end{rem}


\subsection{ ${}_8 W_7$ transformations}

\medskip

\noindent
{\bf { \large Nonterminating ${}_8 W_7 $ transformation formula}}

\medskip

By taking the limit $N \to \infty$ in \eqref{BDT1}, we have the following:
\begin{prop}
Assume that ${\displaystyle \left| \frac{a^{m+1} q^{m+1} }{B C d e f^{m}} x_i^{-1} \right| <1 }$
for all $i = 1, \cdots , n$ and $|f y_k^{-1}| < 1$ for all $k = 1, \cdots , m$. 
Then we have
\begin{eqnarray}\label{NT87}
&&
\sum_{\gamma \in {\Bbb N}^{n}}
x_1^{-\gamma_1} \cdots x_n^{-\gamma_n} 
\left(
\frac{a^{m+1} q^{m+1} }{B C d e f^{m}} 
\right)^{|\gamma|} 
q^{e_2 ( \gamma )} \ 
\frac{\Delta({x}
q^{\gamma})}
{\Delta({x})}
\prod_{1 \le i \le n}
\frac{1-  a q^{|\gamma| + \gamma_i} x_i}
{1-  a x_i}\\
&&
\quad 
\times
\prod_{1 \le i, j \le n}
\frac{(b_j x_i / x_j)_{\gamma_i}} {(q x_i / x_j)_{\gamma_i}}
\prod_{1 \le i \le n, 1 \le k \le m}
\frac{(c_k x_i y_k )_{\gamma_i}}
{(( a q / f ) x_i y_k)_{\gamma_i}}
\nonumber
\\
&& 
\quad \quad 
\times
\prod_{1 \le i \le n}
\frac{(a x_i )_{|\gamma|}}
{( ( a q /b_i ) x_i)_{|\gamma|}}
\prod_{1 \le k \le m}
\frac{( f y_k^{-1})_{|\gamma|}}
{(( a q / c_k )  y_k^{-1})_{|\gamma|}}
\nonumber\\
&&
\quad \quad \quad 
\times
\frac{1}{( a q / d, a q / e )_{|\gamma|}}
\prod_{1 \le i \le n}
\left({(d x_i, e x_i )_{\gamma_i}}
\right)
\nonumber
\\
&=&
\frac
{(\mu d f / a, \mu e f / a)_\infty}
{(a q / d, a q / e)_\infty}
\prod_{ 1 \le k \le m}
\frac
{((\mu c_k f / a) y_k, f y_k^{-1} )_\infty}
{(\mu q y_k, (a q /c_k) y_k^{-1} )_\infty}
\nonumber \\
&& 
\quad 
\times
\prod_{ 1 \le i \le n}
\frac
{(a q x_i,  ( \mu b_i f / a )x_i^{-1} )_\infty}
{((a q / b_i) x_i, (\mu f / a) x_i^{-1} )_\infty}
\nonumber\\
&& 
\quad \quad
\times 
\sum_{\delta \in {\Bbb N}^{m} }
y_1^{-\delta_1} \cdots y_m^{-\delta_m} 
f^{|\delta|}  
q^{e_2 ( \delta )} \
\frac{\Delta ({y} q^{\delta})}{\Delta ({y})}
\prod_{1 \le k \le m}
\frac{1 -  \mu q^{|\delta| + \delta_k} y_k }
{1 -  \mu y_k }
\nonumber\\
&&
\quad \quad \quad 
\times
\prod_{1 \le k, l \le m}
\frac{(( a q / c_l^{} f ) y_k / y_l)_{\delta_k}}{(q y_k / y_l)_{\delta_k}}
\prod_{1 \le i \le n, 1 \le k \le m}
\frac{((a q /b_i^{} f) x_i y_k)_{\delta_k}}
{(( a q / f) x_i y_k)_{\delta_k}}
\nonumber
\\
&&
\quad \quad \quad \quad 
\times
\prod_{1 \le k \le m}
\frac{( \mu  y_k)_{|\delta|}}
{(( \mu c_k f / a ) y_k)_{|\delta|}}
\prod_{1 \le i \le n}
\frac{( (\mu f / a )  x_i^{-1})_{|\delta|}}
{( (\mu b_i f/ a ) x_i^{-1})_{|\delta|}}
\nonumber \\
&&
\quad \quad \quad \quad \quad 
\times
\frac{1}{( \mu d f / a,  \mu e f / a )_{|\delta|}}
\prod_{1 \le k \le m}
{((a q / d f ) y_k, (a q / e f ) y_k)_{\delta_k}},
\nonumber
\end{eqnarray}
where
$\mu = a^{m+2} q^{m+1} / B C d e f^{m+1} $.

\end{prop}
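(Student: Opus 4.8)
The plan is to obtain \eqref{NT87} by letting $N\to\infty$ in the balanced duality transformation \eqref{BDT1}. The only parameters in \eqref{BDT1} that depend on $N$ are the two very-well-poised $v$-parameters $\mu f q^{N}$ and $q^{-N}$ occurring (in this order) on both sides; note that $\mu=a^{m+2}q^{m+1}/BCde f^{m+1}$ is itself independent of $N$. Every factor in the scalar prefactor on the right of \eqref{BDT1} is a $q$-shifted factorial $(w)_N$ with $w$ independent of $N$, hence converges to $(w)_\infty$; this immediately yields the infinite-product prefactor in \eqref{NT87}. It therefore remains to identify the limits of the two $W$-series and to justify passing the limit through the sums.

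First I would isolate, in the summand of the left-hand $W^{n,m+2}$ series (see \eqref{DefWSer}), the factors carrying $N$. These come only from the pairs $(u,v)=(d,\mu f q^{N})$ and $(u,v)=(e,q^{-N})$, namely
\[
(\mu f q^{N})_{|\gamma|}\,(q^{-N})_{|\gamma|}\,
\prod_{1\le i\le n}\frac{1}{\big((aq/\mu f)\,q^{-N}x_i\big)_{\gamma_i}\,\big(aq^{N+1}x_i\big)_{\gamma_i}} .
\]
As $N\to\infty$ one has $(\mu f q^{N})_{|\gamma|}\to 1$ and $(aq^{N+1}x_i)_{\gamma_i}\to 1$, while $(q^{-N})_{|\gamma|}\sim(-1)^{|\gamma|}q^{\binom{|\gamma|}{2}-N|\gamma|}$ and $\big((aq/\mu f)q^{-N}x_i\big)_{\gamma_i}\sim\big(-(aq/\mu f)x_i\big)^{\gamma_i}q^{\binom{\gamma_i}{2}-N\gamma_i}$. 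The factor $q^{-N|\gamma|}$ cancels against $\prod_i q^{N\gamma_i}$, so the $N$-dependence disappears, and the surviving contribution times the argument $z^{|\gamma|}=q^{|\gamma|}$ collapses to $\prod_i x_i^{-\gamma_i}\,(\mu f/a)^{|\gamma|}\,q^{\binom{|\gamma|}{2}-\sum_i\binom{\gamma_i}{2}}$. Using $\binom{|\gamma|}{2}-\sum_i\binom{\gamma_i}{2}=e_2(\gamma)$ and $\mu f/a=a^{m+1}q^{m+1}/BCde f^{m}$, this is exactly the prefactor $x_1^{-\gamma_1}\cdots x_n^{-\gamma_n}\big(a^{m+1}q^{m+1}/BCde f^{m}\big)^{|\gamma|}q^{e_2(\gamma)}$ on the left of \eqref{NT87}; all remaining, $N$-free factors of the $W$-series reproduce the very-well-poised and Pochhammer factors there verbatim.

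The right-hand $W^{m,n+2}$ series is treated identically, with the roles $n\leftrightarrow m$, $x\leftrightarrow y$ and very-well-poised base $a\to\mu$. The $N$-dependent pairs are now $(aq/df,\mu f q^{N})$ and $(aq/ef,q^{-N})$, so that $sq/v=q^{1-N}/f$ and $sq/v=\mu q^{N+1}$ respectively; repeating the asymptotic analysis and combining with $z^{|\delta|}=q^{|\delta|}$ produces the prefactor $y_1^{-\delta_1}\cdots y_m^{-\delta_m}\,f^{|\delta|}\,q^{e_2(\delta)}$ (here the base equals $f$ because $\mu q/\mu f\cdot q=1/f\cdot q$), matching the right side of \eqref{NT87}, while every other factor converges termwise to its stated counterpart.

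The main obstacle is not the termwise computation but the justification of interchanging $\lim_{N\to\infty}$ with the summations: the limit turns a terminating sum into a genuinely infinite multiple series on each side. I would handle this by dominated convergence, bounding the summand uniformly in $N$ by a summable majorant; the hypotheses $\big|(a^{m+1}q^{m+1}/BCde f^{m})x_i^{-1}\big|<1$ for all $i$ and $|fy_k^{-1}|<1$ for all $k$ are precisely the geometric-decay conditions that make such a majorant summable, via the ratio test for multiple power series. This is the same argument recorded in the convergence remark above and carried out in detail by Milne and Newcomb \cite{MilNew1}, so here it suffices to invoke it.
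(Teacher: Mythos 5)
Your proposal is correct and coincides with the paper's own proof: the paper obtains \eqref{NT87} precisely by letting $N\to\infty$ in the balanced duality transformation \eqref{BDT1}, with the interchange of limit and summation justified by dominated convergence via Horn's ratio test as recorded in the paper's convergence remark (with details delegated to Milne--Newcomb), exactly as you argue. Your explicit termwise asymptotics for $(q^{-N})_{|\gamma|}$ and the $q^{-N}$-shifted denominator factors, producing the prefactors $x_1^{-\gamma_1}\cdots x_n^{-\gamma_n}\left(\mu f/a\right)^{|\gamma|}q^{e_2(\gamma)}$ and $y_1^{-\delta_1}\cdots y_m^{-\delta_m}f^{|\delta|}q^{e_2(\delta)}$, merely carry out in detail the computation the paper leaves implicit.
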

\medskip
 
For the justification of this limiting procedure for \eqref{NT87}, see Remark 3.7.

\medskip

In the case when $m=1$ and $y_1 = 1$, \eqref{NT87} reduces to

\begin{eqnarray}\label{m1NT87}
&&
\sum_{\gamma \in {\Bbb N}^{n}}
x_1^{-\gamma_1} \cdots x_n^{-\gamma_n}
\left(
\frac{a^{2} q^{2} }{B c d e f^{}}
\right)^{|\gamma|}
q^{ e_2 ( \gamma )} \ 
\frac{\Delta({x}
q^{\gamma})}
{\Delta({x})}
\prod_{1 \le i \le n}
\frac{1-  a q^{|\gamma| + \gamma_i} x_i}
{1-  a x_i}
\\
&& 
\quad 
\times
\prod_{1 \le i, j \le n}
\frac{(b_j x_i / x_j)_{\gamma_i}} {(q x_i / x_j)_{\gamma_i}}
\prod_{1 \le i \le n}
\frac{(c x_i, d x_i, e x_i)_{\gamma_i}}
{(( a q / f ) x_i)_{\gamma_i}}
\nonumber\\
&&
\quad \quad 
\times
\frac{( f)_{|\gamma|}}
{( a q / c, a q / d, a q / e)_{|\gamma|}}
\prod_{1 \le i \le n}
\frac{(a x_i)_{|\gamma|}}
{( ( a q /b_i ) x_i)_{|\gamma|}}
\nonumber\\
&& 
\quad 
=
\frac
{(\mu c f / a, \mu d f / a, \mu e f / a, f)_\infty}
{(a q / c, a q / d, a q / e, \mu q)_\infty}
\prod_{ 1 \le i \le n}
\frac
{(a q x_i,  ( \mu b_i f / a ) x_i^{-1} )_\infty}
{((a q / b_i) x_i, (\mu f / a) x_i^{-1} )_\infty}
\nonumber\\
&& 
\quad \quad  
\times
{}_{2 n + 4} W_{2 n + 3}
\left[
\mu; \{(a q /b_i^{} f ) x_i \}_{n}
a q / c f,a q / d f,a q / e f,
\{(\mu f / a ) x_i^{-1} \}_{n}
; q; f
\right],\nonumber
\end{eqnarray}
where $\mu = a^{3} q^{2} / B c d e f^{2} $.

\begin{rem}
In the case when $m=n=1$ and $x_1 = y_1 =1$, \eqref{NT87} reduces to 
the following transformation formula for nonterminating ${}_8 W_7$ series
\begin{eqnarray}\label{mn1NT87}
&&
{}_{8} W_{7} 
\left[
	\begin{matrix}
		a; b, c, d, e, f
	\end{matrix};
q; \frac{a^2 q^2}{b c d e f}
\right]
=
\frac
{(\mu b f/ a, \mu c f / a, \mu d f / a, \mu e f / a, a q, f)_\infty }
{(a q / b, a q /c, a q/d, a q/e, \mu q, \mu f / a)_\infty}
\\
&&
\quad \quad \quad \quad \quad 
\times
{}_{8} W_{7} 
\left[
	\begin{matrix}
		\mu ; a q / b f,  a q / c f,  a q / d f,  a q / e f,  
        \mu f / a
	\end{matrix};
q;  f
\right],\nonumber
\end{eqnarray}
where $ \mu = a^3 q^2 /bcde f^2$, 
under the convergence condition $\max(|{a^2 q^2}/{b c d e f}|,| f|) < 1$.
\end{rem}

We mention that by assuming $ e= aq/ f$ in \eqref{m1NT87}, we get the following:

\begin{cor}
Assume that ${ \displaystyle \left|\frac{a  q }{B c d } x_i^{-1} \right| < 1}$ 
for all $i=1, \cdots , n$. Then 
\begin{eqnarray}\label{NT65Sum}
&&
\sum_{\gamma \in {\Bbb N}^{n}}
x_1^{-\gamma_1} \cdots x_n^{-\gamma_n}
\left(
\frac{a  q }{B c d }
\right)^{|\gamma|}
q^{e_2 ( \gamma )} \
\frac{\Delta({x}
q^{\gamma})}
{\Delta({x})}
\prod_{1 \le i \le n}
\frac{1-  a q^{|\gamma| + \gamma_i} x_i}
{1-  a x_i}
\\
&& 
\quad 
\times
\prod_{1 \le i, j \le n}
\frac{(b_j x_i / x_j)_{\gamma_i}} {(q x_i / x_j)_{\gamma_i}}
\prod_{1 \le i \le n}
{(c x_i, d x_i)_{\gamma_i}}
\nonumber\\
&&
\quad \quad 
\times
\left[{( a q / c, a q / d)_{|\gamma|}}\right]^{-1}
\prod_{1 \le i \le n}
\frac{(a x_i)_{|\gamma|}}
{( ( a q /b_i ) x_i)_{|\gamma|}}
\nonumber\\
&=&
\frac
{(a q / B c, a q / B d)_\infty}
{(a q / c, a q / d)_\infty}
\prod_{ 1 \le i \le n}
\frac
{(a q x_i,  (  a q  b_i / B c d ) x_i^{-1} )_\infty}
{((a q / b_i) x_i, (a q / B c d) x_i^{-1} )_\infty}.
\nonumber
\end{eqnarray}
\end{cor}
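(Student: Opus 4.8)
The plan is to derive \eqref{NT65Sum} as the specialization $e = aq/f$ of the already-established nonterminating transformation \eqref{m1NT87}. First I would substitute $e = aq/f$ everywhere in \eqref{m1NT87} and track the cancellations on the left-hand side. Since $ef = aq$, the argument $a^2 q^2/(Bcdef)$ collapses to $aq/(Bcd)$; the numerator factors $(ex_i)_{\gamma_i}$ coincide with the denominator factors $((aq/f)x_i)_{\gamma_i}$ and cancel; and because $aq/e = f$, the factor $(aq/e)_{|\gamma|}$ cancels the $(f)_{|\gamma|}$ in the numerator. After these cancellations the left-hand side of \eqref{m1NT87} becomes exactly the multiple series on the left of \eqref{NT65Sum}, so nothing more is required there.

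The key step is to show that the very-well-poised ${}_{2n+4}W_{2n+3}$ series on the right-hand side of \eqref{m1NT87} collapses to its leading term $1$. Writing $\mu = a^3 q^2/(Bcdef^2)$ and using $e = aq/f$ gives $\mu = a^2 q/(Bcdf)$, and one of the upper parameters of that series, namely $aq/ef$, becomes $aq/(aq) = 1$. By the definition \eqref{DefVWP}, the summand of any ${}_{r+3}W_{r+2}$ carries a factor $(a_j)_k$ for each upper parameter $a_j$; with $a_j = 1$ this is $(1)_k$, which vanishes for every $k \ge 1$ because its first factor is $1 - q^0 = 0$, whereas the matching denominator factor $(\mu q)_k$ is generically nonzero. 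Hence only the $k=0$ term survives and the series equals $1$. In particular the result is independent of the argument $f$, so no convergence condition is inherited from this factor.

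It then remains to simplify the infinite-product prefactors of \eqref{m1NT87} under $e = aq/f$. Using $\mu ef/a = \mu q$ and $aq/e = f$, the numerator $(\mu ef/a)_\infty = (\mu q)_\infty$ cancels the denominator $(\mu q)_\infty$, and the denominator $(aq/e)_\infty = (f)_\infty$ cancels the numerator $(f)_\infty$; substituting $\mu = a^2 q/(Bcdf)$ then sends $\mu cf/a$ and $\mu df/a$ to $aq/(Bd)$ and $aq/(Bc)$, reproducing the scalar prefactor of \eqref{NT65Sum}. The same substitution gives $\mu f/a = aq/(Bcd)$ and $\mu b_i f/a = aq\,b_i/(Bcd)$, so the product over $i$ in \eqref{m1NT87} becomes precisely the product in \eqref{NT65Sum}.

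I expect the only delicate point to be the legitimacy of the specialization rather than any computation. The identity \eqref{m1NT87} holds on the region where both of its sides converge, whereas setting $e = aq/f$ drives us to the boundary where the right-hand series would otherwise fail to converge. The clean resolution is that after the specialization the right-hand series terminates, so it converges trivially, and the identity persists by continuity in $e$; the surviving requirement is then just $|aq/(Bcd)\,x_i^{-1}| < 1$ for all $i$, inherited from the left-hand series, which is exactly the hypothesis of the corollary. Confirming this interchange of specialization and convergence is the main obstacle.
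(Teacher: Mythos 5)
Your proof is correct and is essentially the paper's own derivation: the paper obtains \eqref{NT65Sum} precisely by setting $e = aq/f$ in \eqref{m1NT87}, whereupon the upper parameter $aq/ef = 1$ produces a factor $(1)_k$ that kills every term of the very well-poised series beyond $k=0$, leaving exactly the prefactor you computed. Your closing worry is unnecessary, though harmless: the convergence condition of the right-hand series is $|f|<1$, which does not involve $e$ at all, so the specialization $e = aq/f$ stays strictly inside the region of validity of \eqref{m1NT87} and no continuity argument is needed --- the only surviving hypothesis is $\left|\frac{aq}{Bcd}\,x_i^{-1}\right|<1$ for all $i$, as you say.
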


\begin{rem}
In the case when $n=1$ and $x_1 =1$, \eqref{NT65Sum} reduces to the Bailey summation formula for 
nonterminating ${}_6 W_5$ series (See \cite{GR1}) 
\begin{eqnarray}\label{BaileySum1}
{}_{6} W_{5} 
\left[
	\begin{matrix}
		a; b, c, d
	\end{matrix};
q; \frac{a q}{b c d }
\right]
&=&
\frac
{( a q, a q / c d, a q/ b d, a q / b c )_\infty }
{(a q / b c d, a q / b, a q / c, a q/ d)_\infty}.
\end{eqnarray}
$A_n$ nonterminating ${}_6 W_5$ summation formula \eqref{NT65Sum} is due to S.C.~Milne and
has appeared as Theorem 4.27 in \cite{Milne2} with a different expression (See also Theorem A.4 in 
Milne-Newcomb \cite{MilNew2}).  
Note that \eqref{NT65Sum} can also be obtained by taking the limit $N \to \infty$ 
in $A_n$ Jackson summation formula \eqref{JS1}.
\end{rem}

\medskip

\noindent
{\bf Terminating ${}_8 W_7$ transformation}

\medskip

\begin{prop}
\begin{eqnarray}\label{T87T1}
&&
\sum_{\gamma \in {\Bbb N}^{n}}
x_1^{\gamma_1} \cdots x_n^{\gamma_n}
\left( \frac{a^{m+1} q^{m+N + 1}}{B C d^m e}\right)^{|\gamma|}
q^{ - e_2 ( \gamma ) } \
\frac{\Delta({x} q^{\gamma})}{\Delta({x})}
\prod_{1 \le i \le n}
\frac{1 - a q^{ |\gamma|+ \gamma_i} x_i}
{1 - a x_i}
\\
&& 
\quad 
\times
\prod_{1 \le i, j \le n}
\frac{(b_j x_i / x_j)_{\gamma_i}} {(q x_i / x_j)_{\gamma_i}}
\prod_{1 \le i \le n, 1 \le k \le m}
\frac{(c_k x_i y_k)_{\gamma_i}}
{(( a q / d ) x_i y_k )_{\gamma_i}}
\nonumber\\
&&
\quad \quad 
 \times
\prod_{1 \le i\le n}
\frac{1} {( a q^{N+1} x_i, (a q/ e ) x_i)_{\gamma_i}}
\nonumber\\
&&
\quad \quad \quad 
\times
{(q^{-N}, e)_{|\gamma|}}
\prod_{1 \le i \le n}
\frac{(a x_i)_{|\gamma|}}
{( ( a q /b_i ) x_i)_{|\gamma|}}
\prod_{1 \le k \le m}
\frac{( d y_k^{-1})_{|\gamma|}}
{(( a q / c_k ) y_k^{-1})_{|\gamma|}}
\nonumber\\
&=&
\prod_{ 1 \le k \le m}
\frac{((a q /c_k e) y_k^{-1}, d y_k^{-1} )_N}
{((a q /c_k) y_k^{-1}, (d/e) y_k^{-1} )_N}
\prod_{ 1 \le i \le n}
\frac{((a q / b_i e) x_i, a q  x_i)_N} 
{((a q / b_i) x_i, (a q / e) x_i)_N} 
\nonumber\\
&& 
\quad 
\times
\sum_{\delta \in {\Bbb N}^{m} }
y_1^{\delta_1} \cdots y_m^{\delta_m}
\left( \frac{BC d^{m-1}}{a^m}\right)^{|\delta|}
q^{ - e_2 ( \delta )} \
\frac{\Delta ({y} q^{\delta})}{\Delta ({y})}
\prod_{1 \le k \le m}
\frac{1 -  (q^{-N} e/d )q^{|\delta| + \delta_k} y_k}
{1 -  (q^{-N} e/ d) y_k }
\nonumber\\
&&
\quad \quad 
\times
\prod_{1 \le k, l \le m}
\frac{(( a q / c_l^{} d ) y_k / y_l)_{\delta_k}}{(q y_k / y_l)_{\delta_k}}
\prod_{1 \le i \le n, 1 \le k \le m}
\frac{((a q /b_i^{} d) x_i y_k )_{\delta_k}}
{(( a q / d) x_i y_k )_{\delta_k}}
\nonumber\\
&& 
\quad \quad \quad 
\times
\prod_{1 \le k \le m}
\frac{1}
{((e/d) y_k, ( q^{1-N} / d ) y_k)_{\delta_k}}
\nonumber\\
&&
\quad \quad \quad \quad 
\times
{(q^{-N}, e)_{|\delta|}}
\prod_{1 \le k \le m}
\frac{( (q^{-N} e / d) y_k)_{|\delta|}}
{(( q^{-N}c_k e/ a ) y_k )_{|\delta|}}
\prod_{1 \le i \le n}
\frac{( (q^{-N} e/ a ) x_i^{-1})_{|\delta|}}
{( (q^{-N} b_i e  / a ) x_i^{-1})_{|\delta|}}.
\nonumber
\end{eqnarray}
\end{prop}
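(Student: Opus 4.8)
The statement \eqref{T87T1} is a terminating ${}_8W_7$-type transformation between an $A_n$ series and an $A_m$ series, and its shape already dictates the method. The geometric weights $x_1^{\gamma_1}\cdots x_n^{\gamma_n}$ and $y_1^{\delta_1}\cdots y_m^{\delta_m}$, the quadratic factors $q^{-e_2(\gamma)}$ and $q^{-e_2(\delta)}$, and the powers $q^{+N}$ sitting inside the two summation arguments are exactly the fingerprints left by reversing the order of a terminating very-well-poised series; none of them occurs in \eqref{BDT1}, where every argument is simply $q$ and no geometric or quadratic weight appears. At the same time, for $m=n=1$ (and $x_1=y_1=1$) the left-hand side collapses to the terminating very-well-poised-balanced series ${}_8W_7[a;b,c,d,e,q^{-N};q;a^2q^{N+2}/(bcde)]$, which is of strictly lower order than the ${}_{10}W_9$ to which \eqref{BDT1} itself reduces. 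Hence the derivation must combine an order-lowering step with a reversal of the summation.

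The plan is as follows. First I would lower the order of the very-well-poised series in \eqref{BDT1} by a limiting procedure that removes the parameter $f$ together with its very-well-poised partner $\mu f q^N$; since the terminating parameter $q^{-N}$ is left untouched, both sides stay finite sums, supported on $\{\gamma:|\gamma|\le N\}$ and $\{\delta:|\delta|\le N\}$, and one obtains an intermediate terminating transformation of ${}_8W_7$-order. Next I would reverse the order of summation on each side. Reversal leaves the common value of the two sides unchanged, but it rewrites each finite very-well-poised $A$-series in the form carrying $\prod_i x_i^{\gamma_i}$, the factor $q^{-e_2(\gamma)}$ and the shifted argument proportional to $a^{m+1}q^{m+N+1}/(BCd^m e)$; on the right it turns the very-well-poised base from $\mu$ into $q^{-N}e/d$ and produces precisely the $(\,\cdot\,)_N$ prefactors displayed in \eqref{T87T1}. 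A final relabelling of parameters then brings the identity into the stated shape.

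The main obstacle is the reversal itself in the multiple setting. In one variable reversal is the elementary substitution $k\mapsto N-k$, but here each series runs over a simplex $\{\gamma:|\gamma|\le N\}$ rather than over a product of intervals, so one must use the appropriate multidimensional reversal and, above all, compute the transform of the Vandermonde ratio $\Delta(xq^\gamma)/\Delta(x)$ — this is exactly the computation that generates the quadratic weight $q^{-e_2(\gamma)}$ together with the geometric weight $x^{\gamma}$, and it is where the sign in the exponent is fixed. Equally delicate is checking that the order-lowering limit and the reversal interact cleanly for every $m$, not merely for $m=1$. The real work is to verify that, after these operations, the summand is again very-well-poised of the claimed $A$-type form with exactly the parameters shown, and that the $N$-dependent prefactors on the two sides combine correctly. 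I would control the bookkeeping against the reductions already available: the $m=n=1$ case must reproduce the classical terminating ${}_8W_7$ transformation, while the $m=1$ and $n=1$ specializations must match the expected one-sided reductions, and agreement in these cases gives strong confirmation that the $q$-power and geometric-factor accounting has been done correctly.
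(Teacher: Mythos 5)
Your diagnosis of where the weights $x^{\gamma}q^{-e_2(\gamma)}$ come from is mistaken, and it sends the whole plan down the wrong path. The paper derives \eqref{T87T1} from \eqref{BDT1} with no reversal of summation at all: one first interchanges the parameters $e \leftrightarrow \mu f q^{N}$ (a legitimate relabelling, since \eqref{BDT1} is an identity in all its parameters and the $W$-series is symmetric within its numerator and within its denominator parameter families; after the swap the base of the right-hand series becomes $q^{-N}e/f$, which is independent of $d$), then lets $d\to\infty$, and finally renames $f$ as $d$. After the swap, the limit $d\to\infty$ removes two numerator parameters at once: $d\to\infty$ itself, whose $\gamma_i$-indexed factorials behave like $\prod_i (d x_i)_{\gamma_i}\sim (-d)^{|\gamma|}x^{\gamma}q^{\sum_i\binom{\gamma_i}{2}}$, and $\mu f q^{N}\to 0$, whose companion factorial is $|\gamma|$-indexed, $(aq/(\mu f q^{N}))_{|\gamma|}\sim(\cdots)^{|\gamma|}q^{\binom{|\gamma|}{2}}$; the mismatch $\sum_i\binom{\gamma_i}{2}-\binom{|\gamma|}{2}=-e_2(\gamma)$ is exactly what creates $x^{\gamma}q^{-e_2(\gamma)}$ and the power $q^{N}$ in the argument. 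These fingerprints are produced by limits, not by reversal (compare \eqref{NT87}, where $N\to\infty$ produces the same shape), so your inference that the derivation \emph{must} contain a reversal of the order of summation is false.

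Both steps of your plan, in the order you propose them, then break down. First, the limit removing $f$ together with $\mu f q^{N}$ (say $f\to\infty$) is indeterminate on the right-hand side of \eqref{BDT1}: since $\mu\propto f^{-(m+1)}$, the base and every parameter of the $W^{m,n+2}$ series degenerate simultaneously; termwise the series tends to a vanishing $q$-binomial-type sum (for $m=n=1$ it is $\sum_{k}(q^{-N})_k q^{k}/(q)_k=(q^{1-N})_N=0$), while the prefactor $\prod_k (f y_k^{-1})_N$ blows up like $f^{mN}$, an $\infty\cdot 0$ form, so your ``intermediate terminating transformation of ${}_8W_7$-order'' is never actually obtained; resolving the indeterminacy requires precisely the parameter reshuffle you postpone to step two. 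Moreover, for $m\ge 2$ this limit strips the $m$ columns $f y_k^{-1}$ and leaves an extra weight $q^{(m-1)\binom{|\gamma|}{2}}$, a shape that never occurs in \eqref{T87T1}, whose left side keeps all $m$ pairs $(c_k y_k,\, d y_k^{-1})$ intact: it is the parameter $d$ of \eqref{BDT1} that must be removed, not $f$. Second, the reversal you then invoke does not exist in this setting: the series terminate triangularly through $(q^{-N})_{|\gamma|}$, i.e.\ are supported on the simplex $|\gamma|\le N$, and for $n\ge 2$ no termwise reversal can exist because the level sets $|\gamma|=j$ and $|\gamma|=N-j$ have different cardinalities; reversal is available only for rectangular termination $\prod_i(q^{-m_i})_{\gamma_i}$. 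So the ``main obstacle'' you flag, computing the transform of $\Delta(xq^{\gamma})/\Delta(x)$ under reversal, is not a computation awaiting care — the rearrangement itself is impossible, and the role that reversal plays in one variable is played here by the parameter interchange carried out \emph{before} the limit.
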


\begin{proof}
Relabel $e$ as $\mu f q^N$, i.e. interchange $ e \leftrightarrow \mu f q^N$. 
Then let  $d$  tend to infinity. Finally, by relabeling $f$ as $d$, we arrive at 
\eqref{T87T1}. 
\end{proof}

In the case when $m=1$ and $y_1= 1$, \eqref{T87T1} reduces to

\begin{eqnarray}\label{m1T87T1}
&&
\sum_{\gamma \in {\Bbb N}^{n}}
x_1^{\gamma_1} \cdots x_n^{\gamma_n}
\left( \frac{a^{2} q^{2+N }}{B c d e}\right)^{|\gamma|}
q^{ - e_2 ( \gamma )} \
\frac{\Delta({x} q^{\gamma})}{\Delta({x})}
\prod_{1 \le i \le n}
\frac{1 - a q^{ |\gamma|+ \gamma_i} x_i}
{1 - a x_i}
\\
&&
\quad 
\times
\prod_{1 \le i, j \le n}
\frac{(b_j x_i / x_j)_{\gamma_i}} {(q x_i / x_j)_{\gamma_i}}
\prod_{1 \le i \le n}
\frac{(c x_i)_{\gamma_i}}
{(( a q / d ) x_i, (a q/ e ) x_i, a q^{N+1} x_i)_{\gamma_i}}
\nonumber\\
&& 
\quad \quad 
\times
\frac
{(q^{-N}, d, e )_{|\gamma|}}
{(a q / c )_{|\gamma|}}
\prod_{1 \le i \le n}
\frac{(a x_i)_{|\gamma|}}
{( ( a q /b_i ) x_i )_{|\gamma|}}
\nonumber\\
&=&
\frac{(a q /c e, d)_N}
{(a q /c, d/e )_N}
\prod_{ 1 \le i \le n}
\frac{((a q / b_i e) x_i, a q  x_i)_N} 
{((a q / b_i) x_i, (a q / e) x_i)_N} 
\nonumber\\
&& 
\quad 
\times
{}_{2 n + 6} W_{2 n + 5}
\left[ q^{-N} e / d; \{ (a q / b_i d) x_i \}_{n}, 
\{ (q^{-N } e / a) x_i^{-1} \}_{n}, a q / c d, e, q^{-N}; q;
\frac{B c  }{a}
\right].
\nonumber
\end{eqnarray}

\begin{rem}
In the case when $m=n=1$ and $x_1 = y_1 = 1$, \eqref{T87T1} reduces to transformation formula 
for terminating ${}_8 W_7$ series
\begin{eqnarray}\label{mn1T87T1}
&& \quad \quad
{}_{8} W_{7}
\left[ a; b, c, d, e, q^{-N}; q; \frac{a^2 q^{N+2}}{b c d e}
\right]\\
&=&
\frac{(a q / b e, a q /c e, a q, d)_N}
{(a q / b, a q /c, a q / e, d/e )_N} 
{}_{8} W_{7}
\left[ q^{-N} e / d; a q / b d, q^{-N } e / a,
a q / c d, e, q^{-N}; q;
\frac{b c  }{a}
\right].
\nonumber
\end{eqnarray}
\end{rem}

Let $aq = cd $ in \eqref{m1T87T1}. Then, by rearranging parameters, we have

\begin{eqnarray}\label{RS2}
&&
\sum_{\gamma \in {\Bbb N}^{n}}
x_1^{\gamma_1} \cdots x_n^{\gamma_n}
\left( \frac{a^{} q^{1+N }}{B c}\right)^{|\gamma|}
q^{ - e_2 ( \gamma )} \
\frac{\Delta({x} q^{\gamma})}{\Delta({x})}
\prod_{1 \le i \le n}
\frac{1 - a q^{ |\gamma|+ \gamma_i} x_i}
{1 - a x_i}
\\
&&
\quad \times
\prod_{1 \le i, j \le n}
\frac{(b_j x_i / x_j)_{\gamma_i}} {(q x_i / x_j)_{\gamma_i}}
\prod_{1 \le i \le n}
\left[
{(( a q / c) x_i, a q^{1+N} x_i)_{\gamma_i}}
\right]^{-1}
\nonumber\\
&& 
\quad \quad 
\times 
\
{( q^{-N}, c)_{|\gamma|}}
\prod_{1 \le i \le n}
\frac{(a x_i)_{|\gamma|}}
{( ( a q /b_i ) x_i )_{|\gamma|}}
=
\prod_{ 1 \le i \le n}
\frac{((a q / b_i c) x_i, a q  x_i)_N} 
{((a q / b_i) x_i, (a q / c) x_i)_N}. 
\nonumber
\end{eqnarray}
\begin{rem}
In the case when $n=1$ and $x_1 = 1$, \eqref{RS2} reduces to the Rogers` summation formula 
for terminating ${}_6 W_5$ series \eqref{RogersSum1}.
\end{rem}

\medskip

As we have seen in this section, one can recover our previous results in \cite{Kaji1} from
the balanced duality transformation formula \eqref{BDT1} by limiting procedures.
Combining with results in \cite{Kaji1} and Rosengren \cite{RoseKM}, one can consider the master formula
for multiple basic hypergeometric transformations of type $A$ with different dimensions presented in 
this section to either of multiple basic Euler transformation \eqref{ETG}, Sears transformation
\eqref{ST1}, (non balanced) duality transformation \eqref{DT1} and balanced duality transformation formula
\eqref{BDT1}. 


\section{$A_n$ hypergeometric transformations}

 In this section, we present several hypergeometric transformation formulas with same dimension $n$
(for multiple basic hypergeometric series of type $A_n$) by combining some special cases of 
hypergeometric transformation with different dimensions which we have obtained in the previous section.
It contains new transformation formulas and some of these are previously known by 
Milne and his collaborators (see \cite{LM1}, \cite{MilNew1}, and \cite{MilneNagoya}).
However, our proofs of them are completely different from theirs and seem to be simpler.

\subsection{$A_n$ Watson transformations}

 In this subsection and next, we derive several $A_n$ generalization of the 
Watson transformation formula between terminating ${}_8 W_7$ series and terminating 
balanced ${}_4 \phi_3$ series 
( (2.5.1) in \cite{GR1} ):
\begin{eqnarray}\label{WatsonT1}
&& \quad  \quad 
{}_{8} W_{7} 
\left[
		a; b, c, d, e, q^{-N}; q;\frac{a^2 q^{2+N}}{b c d e}
\right]
 \\
&=& \
\frac{(a q, a q / d e)_N}{(a q / d, a q / e)_N}
{}_{4} \phi_{3} 
\left[
	\begin{matrix}
		q^{-N}, d, e, a q /b c \\
		a q / b, a q / c, d e q^{-N} / a
	\end{matrix};
q; q
\right].  
\nonumber
\end{eqnarray}

Especially, we give two types of $A_n$ Watson transformation formula
whose series in the left hand side are expressible in terms of $W^{n, 2}$ 
series here. We will use a special case ($m=1$ \eqref{m1DT1}) of the (non-balanced)
 duality transformation formula \eqref{DT1} and special cases of ${}_4 \phi_3$ series 
of type $A$ to the identities below. 
To be precise, we produce them according to the following diagram.  

\vspace{5mm}
\begin{center}
\setlength{\unitlength}{1mm}
\begin{picture}(100, 30)
\put(0,20){\makebox(15,10)[r]{$W^{n, 2}$ series}}
\put(80,20){\makebox(15,10)[r]{${}_4 \phi_3$ series in $A_n$}}
\put(20, 25){\vector(1,0){45}}
\put(35, 27){Watson trans.}
\put(45, 0){\makebox(15,10)[r]{${}_{n+3} \phi_{n+2}$ series}}
\put(20, 23){\vector(3,-2){20}}
\put(50, 8){\vector(3,2){20}}
\put(20, 14){\eqref{m1DT1}}
\put(65, 14){ {\bf (B)} ${}_4 \phi_3$ transformation of type $A$}
\end{picture}
\end{center}

\medskip

\noindent
{\bf {\large The 1st one}}

\medskip

\begin{prop}

\begin{eqnarray}\label{ltAnWT1}
&&
\MW{n, 2}{\{b_i\}_n \\ \{x_i\}_n}
{a}{c, d}{e, q^{-N}}
{q; \frac{a^2 q^{N+2}}{B c d e}}\\ 
&& \quad \quad
= \
\frac{(a q / B d)_N}
{(a q / d)_N}
\prod_{1 \le i \le n}
\frac
{(a x_i)_N}
{((a q/b_i) x_i)_N}
\
\sum_{\gamma \in {\Bbb N}^n}
q^{|\gamma|} 
\frac{\Delta (x q^{\gamma})}{\Delta (x)}
\nonumber
\\
&& \quad \quad \quad \quad
\times
\frac{(q^{-N}, a q  / c e  )_{|\gamma|}}
{(B d q^{-N} /a, a q / c)_{|\gamma|}}
\prod_{1 \le i, j \le n}
\frac{(b_j x_i / x_j)_{\gamma_i}} {(q x_i / x_j)_{\gamma_i}}
\prod_{1 \le i \le n}
\frac{(d x_i )_{\gamma_i}}{( (a q/ e) x_i)_{\gamma_i}}.
\nonumber
\end{eqnarray}
\end{prop}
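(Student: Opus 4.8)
The plan is to realize \eqref{ltAnWT1} as the composite of two transformations already established in Section~3, exactly as the diagram indicates: first the $m=1$ non-balanced duality transformation \eqref{m1DT1}, then the $m=1$ type-$A$ Sears transformation \eqref{m1ST1} read backwards. The starting observation is that the $W^{n,2}$ series on the left of \eqref{ltAnWT1} is \emph{identical} to the one on the left of \eqref{m1DT1} (same $s=a$, same upper pair $c,d$, same lower pair $e,q^{-N}$, and same argument $a^2q^{N+2}/Bcde$). So I would begin by invoking \eqref{m1DT1} verbatim, rewriting the left-hand side as the prefactor $\frac{(a^2q^2/Bcde,\,e)_N}{(aq/c,\,aq/d)_N}\prod_{i}\frac{(aqx_i)_N}{((aq/b_i)x_i)_N}$ times a single \emph{terminating} one-dimensional ${}_{n+3}\phi_{n+2}$ whose numerator parameters are $q^{-N},\{(aq/b_ie)x_i\}_n, aq/ce, aq/de$ and whose denominator parameters are $q^{1-N}/e,\{(aq/e)x_i\}_n, a^2q^2/Bcde$. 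The whole game is then to turn this one-dimensional series into the $A_n$ sum over $\gamma\in{\Bbb N}^n$ appearing on the right of \eqref{ltAnWT1}.

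This is precisely what \eqref{m1ST1} accomplishes when run from its one-dimensional right-hand side to its $A_n$ left-hand side. I would apply \eqref{m1ST1} with its parameters $(a,c,d,e)$ specialized to $(aq/ce,\;d,\;aq/e,\;aq/c)$ and its $b_i$ (hence $B$) left unchanged. Under this dictionary the right-hand ${}_{n+3}\phi_{n+2}$ of \eqref{m1ST1} matches the series produced in Step~1 parameter by parameter: the free numerator factor becomes $aq/ce$, the bottom entry $q^{1-N}a/e$ becomes $q^{1-N}/e$, the bottom entry $de/Bc$ becomes $a^2q^2/Bcde$, and $\{(d/b_i)x_i\},\{dx_i\}$ become $\{(aq/b_ie)x_i\},\{(aq/e)x_i\}$. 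Inverting \eqref{m1ST1} then rewrites the series as its $A_n$ left-hand side, whose $|\gamma|$-factors are $(q^{-N},aq/ce)_{|\gamma|}$ over $(aq/c,\,Bdq^{-N}/a)_{|\gamma|}$ — here one checks that the bottom factor $aBcq^{1-N}/(de)$ of \eqref{m1ST1} specializes to $Bdq^{-N}/a$ — and whose $\prod_i$ factor is $\prod_i (dx_i)_{\gamma_i}/((aq/e)x_i)_{\gamma_i}$, exactly the summand displayed in \eqref{ltAnWT1}.

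It then remains to collect the two scalar prefactors. Under the above specialization the $N$-prefactor of \eqref{m1ST1} becomes $\frac{(e,\,a^2q^2/Bcde)_N}{(aq/c,\,aq/Bd)_N}$, so inverting it contributes its reciprocal $\frac{(aq/c,\,aq/Bd)_N}{(e,\,a^2q^2/Bcde)_N}$; since this factor is free of $x_i$, the $x_i$-product produced by \eqref{m1DT1} passes straight through. Multiplying by the Step~1 prefactor, the pairs $(e)_N$, $(aq/c)_N$ and $(a^2q^2/Bcde)_N$ cancel, leaving $\frac{(aq/Bd)_N}{(aq/d)_N}$ together with the inherited $x_i$-product, which is the prefactor of \eqref{ltAnWT1}.

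The computation itself is routine; the genuinely delicate points, where I would concentrate the care, are two. First, the parameter matching in the second step has a two-fold ambiguity interchanging the roles of $c$ and $d$ (the alternative branch $(a,c,d,e)\mapsto(aq/de,c,aq/e,aq/d)$ is equally consistent in the $|\gamma|$-factors but produces $(aq/de)_{|\gamma|}$ and $Bcq^{-N}/a$ rather than $aq/ce$ and $Bdq^{-N}/a$), so I would verify explicitly that only the branch sending the free numerator factor to $aq/ce$ reproduces the stated summand. Second, I would track the well-poised product $\prod_i(\cdot)_N/((aq/b_i)x_i)_N$ carefully through the composition — it is inherited unchanged from \eqref{m1DT1} because the Sears prefactor carries no $x_i$-dependence — and confirm that it emerges in precisely the numerator form recorded in \eqref{ltAnWT1}.
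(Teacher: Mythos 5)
Your proof is correct and is essentially the paper's own argument: the paper likewise applies \eqref{m1DT1} and then the $n=1$, $m \to n$ instance \eqref{n1ST1} of the Sears transformation \eqref{ST1} --- which is precisely \eqref{m1ST1} read backwards, up to relabeling --- invoking the transposition symmetry of the numerator and denominator parameters of the ${}_{n+3}\phi_{n+2}$ series (the change of variables \eqref{covWT1}), which corresponds exactly to the branch choice you verified. One remark: your composition correctly yields the product $\prod_{1\le i\le n}(aq\,x_i)_N/((aq/b_i)x_i)_N$, and the factor $(a\,x_i)_N$ printed in \eqref{ltAnWT1} is a typo in the paper, as the $n=1$ reduction to Watson's transformation \eqref{WatsonT1} confirms.
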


\begin{proof}
We combine \eqref{m1DT1}
and $n=1$ and $m \to n$ case of \eqref{ST1} 

\begin{eqnarray}\label{n1ST1}
&&
{}_{n+3} \phi_{n+2}
\left[
	\begin{matrix}
		q^{-N}, a, c,  \{u_i \}_{n} \\
		e, \{v_i  \}_{n}, a c  U q^{1-N} /  e V 
	\end{matrix}; q, q
\right]
\\
&&
\quad \quad 
=
\frac{(e,  e V / a c U)_N}
{(e/a, e V/ c U)_N}
\
\sum_{\gamma \in {\Bbb N}^n}
q^{|\gamma|} 
\frac{\Delta (v q^{\gamma})}{\Delta (v)}
\nonumber
\\
&&
\quad \quad \quad \quad
\times
\frac{(q^{-N}, a )_{|\gamma|}}
{(q^{1-N} a /e, e V / c U)_{|\gamma|}}
\prod_{1 \le i, j \le n}
\frac{( v_i / u_j)_{\gamma_i}} {(q v_i / v_j)_{\gamma_i}}
\prod_{1 \le i \le n}
\frac{( v_i/ c )_{\gamma_i}}{( v_i )_{\gamma_i}}
,
\nonumber
\end{eqnarray}
here we give in the useful form for the present proof and latter uses in this section, as {\bf (B)} 
in the above diagram. 
Note that both of the series in the right hand side of \eqref{m1DT1} and that in the left hand side 
of \eqref{n1ST1} satisfy same condition as basic hypergeometric series: namely they are terminating balanced
${}_{n+3} \phi_{n+2}$ series. On the set of variables 
\begin{eqnarray}
&&
( \{(a q / b_i e) x_i \}_{n}, a q / c e, a q / d e,  a^2 q^2 / B c d e, \{(a q / e) x_i\}_{n} ),
\end{eqnarray}
we consider the following change of variables:
\begin{eqnarray}\label{covWT1}
\tilde{a} = a q/ c e, & \tilde {c} = a q / d e, & \tilde{e} = a^2 q^2/ B c d e  
\nonumber \\
\tilde{u}_i = ( a q / b_i e) x_i,  & \tilde{v}_i = (a q/ e) x_i  \quad \quad (i= 1, \cdots , n ).&
\end{eqnarray}
For given function $\psi$, we denote by $\tilde{\psi} = \psi(a, c, e, \{ u_i \}_n, \{ v_i\}_n)$
the function that is obtained by replacing the variables
$(a, c, e, \{ u_i \}_n, \{ v_i\}_n)$ by $( \tilde{a}, \tilde{c}, \cdots).$ In this case, 
the change of variables \eqref{covWT1} is a transposition inside of each sets of numerator parameters 
in ${}_{n+3} \phi_{n+2} $ series and of denominator parameters.
Hence, the right hand side of \eqref{m1DT1} is invariant under this change of variables. 
By applying \eqref{n1ST1} to the series the right hand side in \eqref{m1DT1}, this invariance
implies \eqref{ltAnWT1}.
\end{proof}

\medskip

We also give  similar transformation formula for multiple series 
which terminates with respect to a certain multi-index. 
In this paper, we call such transformation formulas as {\it rectangular } and 
transformations for the multiple series which terminates with respect 
to the length of multi-indices as {\it triangular}.

\medskip

\begin{cor}
\begin{eqnarray}\label{itAnWT1}
&&
\MW{n, 2}{\{ q^{-m_i} \}_n \\ \{x_i\}_n}
{a}{c, d}{b, e}
{q; \frac{a^2 q^{|M| +2}}{b c d e}}\\ 
&&
\quad \quad =
\frac{(a q / b d)_{|M|}}{(a q / d)_{|M|}}
\prod_{ 1\le i \le n}
\frac{(a q x_i)_{m_i}}{((a q / b ) x_i)_{m_i}}
\sum_{\gamma \in {\Bbb N}^n}
q^{|\gamma|} 
\frac{\Delta (x q^{\gamma})}{\Delta (x)}
\nonumber\\
&& \quad \quad \quad \quad 
\times
\frac{(b , a q / c e )_{|\gamma|}}
{(b d q^{-|M|} / a,  a q /c)_{|\gamma|}}
\prod_{1 \le i, j \le n}
\frac{(q^{- m_j}x_i / x_j)_{\gamma_i}} {(q x_i / x_j)_{\gamma_i}}
\prod_{1 \le i \le n}
\frac{(d x_i )_{\gamma_i}}{((a q / e) x_i)_{\gamma_i}}
.
\nonumber
\end{eqnarray}
\end{cor}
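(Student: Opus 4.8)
The plan is to deduce \eqref{itAnWT1} from the triangular Watson transformation \eqref{ltAnWT1} by the elementary polynomial argument that trades termination in the total length $|\gamma|$ for rectangular termination in the multi-index $\gamma$. I fix the nonnegative integers $m_1,\dots,m_n$ together with $a,c,d,e$ and the base points $x_i$, and regard $b$ as the free variable. Because $b_i=q^{-m_i}$ forces $\gamma_i\le m_i$, the left-hand $W^{n,2}$ series of \eqref{itAnWT1} is a finite sum, and the right-hand multiple sum is rectangular for the same reason; inspecting \eqref{DefWSer} term by term shows that each side is a rational function of $b$. It therefore suffices to verify \eqref{itAnWT1} at the infinitely many pairwise distinct points $b=q^{-N}$, $N=0,1,2,\dots$ (distinct since $0<|q|<1$), because two rational functions of $b$ agreeing there agree identically.

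First I would check that, at $b=q^{-N}$, the left-hand side of \eqref{itAnWT1} is literally the left-hand side of \eqref{ltAnWT1} specialized to $b_i=q^{-m_i}$ with the same $N$. This rests on the structure of \eqref{DefWSer}: in the factorised form the contribution of the pairs $(u_k,v_k)$ splits as a product over the upper parameters times a product over the lower parameters, so a $W^{n,2}$ series depends only on the \emph{multiset} of upper parameters and the multiset of lower parameters, not on the pairing. At $b=q^{-N}$ the lower multiset of \eqref{itAnWT1} is $\{q^{-N},e\}$, which equals the lower multiset $\{e,q^{-N}\}$ of \eqref{ltAnWT1}; the base $a$, the upper multiset $\{c,d\}$, the numerator data $\{q^{-m_j}x_i/x_j\}$, and the arguments ($a^2q^{|M|+2}/(bcde)$ against $a^2q^{N+2}/(Bcde)$ with $B=\prod_i q^{-m_i}=q^{-|M|}$, both equal to $a^2q^{N+|M|+2}/(cde)$) all match. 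Hence the two left-hand sides are the identical $A_n$ ${}_8W_7$ series, and \eqref{ltAnWT1} evaluates it.

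It then remains to identify the right-hand side of \eqref{ltAnWT1} at $b_i=q^{-m_i}$ with the right-hand side of \eqref{itAnWT1} at $b=q^{-N}$. The two $\gamma$-sums become term-by-term identical under the substitution: the factors $(q^{-N})_{|\gamma|}$, $(aq/ce)_{|\gamma|}$, $(aq/c)_{|\gamma|}$, the denominator entry (which reads $(Bdq^{-N}/a)_{|\gamma|}$ on one side and $(bdq^{-|M|}/a)_{|\gamma|}$ on the other, both $=(dq^{-N-|M|}/a)_{|\gamma|}$), the double product $\prod_{i,j}(q^{-m_j}x_i/x_j)_{\gamma_i}/(qx_i/x_j)_{\gamma_i}$, and $\prod_i (dx_i)_{\gamma_i}/((aq/e)x_i)_{\gamma_i}$ all coincide. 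So the entire content is the equality of the two scalar prefactors, which I would establish by the single $q$-shifted-factorial reindexing identity $(w)_{N+|M|}=(w)_N(wq^N)_{|M|}=(w)_{|M|}(wq^{|M|})_N$: applied with $w=aq/d$ it turns $(aq^{|M|+1}/d)_N/(aq/d)_N$ into $(aq^{N+1}/d)_{|M|}/(aq/d)_{|M|}$, and applied once per $i$ with $w=aqx_i$ it converts the length-$N$ factor $\prod_i (aqx_i)_N/((aq/b_i)x_i)_N$ (the factor inherited from \eqref{m1DT1}, with $b_i=q^{-m_i}$) into the length-$m_i$ factor $\prod_i (aqx_i)_{m_i}/((aq/b)x_i)_{m_i}$ of \eqref{itAnWT1} at $b=q^{-N}$.

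I expect this prefactor matching to be the main obstacle: it is precisely the step where the passage from length-$|\gamma|$ to rectangular termination is paid for, and it requires the reindexing to be executed carefully for both the $d$-factor and the $n$ base-point factors, all other comparisons being structural. Once the identity is confirmed at every $b=q^{-N}$, rationality in $b$ yields \eqref{itAnWT1} for generic $b$. I note that this is exactly the argument invoked in the derivation of \eqref{R-PSSum1}: carried out at the summation level it converts Milne's $A_n$ Pfaff--Saalsch\"utz formula \eqref{PSSum1} into its integer-parameter-difference form.
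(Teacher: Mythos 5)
Your proposal is correct and takes essentially the same approach as the paper's proof: specialize \eqref{ltAnWT1} at $b_i = q^{-m_i}$, verify \eqref{itAnWT1} at $b = q^{-N}$ for every nonnegative integer $N$, and extend to arbitrary $b$ by the standard argument that a rational (after clearing denominators, polynomial in $b^{-1}$) identity holding at infinitely many points holds identically. Your explicit checks --- the independence of the $W^{n,2}$ series from the pairing of upper and lower parameters, and the prefactor matching via $(w)_{N+|M|}=(w)_N(wq^N)_{|M|}=(w)_{|M|}(wq^{|M|})_N$ --- simply fill in details the paper's terse proof leaves implicit (and correctly treat the factor $(aqx_i)_N$ that appears with a typographical slip as $(ax_i)_N$ in \eqref{ltAnWT1}).
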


\begin{proof}
We first write the product factor in the right hand side of \eqref{ltAnWT1}
as a quotient of infinite products using \eqref{ShiftFact}.
Set $b_i = q^{-m_i}$ in \eqref{ltAnWT1}, and notice that \eqref{itAnWT1} is 
true for $b= q^{-N}$ for all nonnegative integer $N$. Clear the denominators 
in \eqref{itAnWT1}. Then we find that it is a polynomial identity in $b^{-1}$ with an infinite 
number of roots. Thus, \eqref{itAnWT1} is true for arbitrary $b$.
\end{proof}

All the corollaries in this section can be proved by similar arguments from 
the formulas in the preceding propositions. So, hereafter we will not repeat 
this procedure in the rest of this paper.  

\begin{rem}
\eqref{itAnWT1} has appeared in  Theorem 6.1 of Milne and Lilly \cite{LM1}.  
\end{rem}

\medskip

\noindent
{\bf {\large The 2nd  one}}

\medskip

\begin{prop}

\begin{eqnarray}\label{ltAnWT2}
&&
\MW{n, 2}{\{b_i\}_n \\ \{x_i\}_n}
{a}{c, d}{e, q^{-N}}
{q; \frac{a^2 q^{N+2}}{B c d e}}\\ 
&& \quad 
=
\prod_{1 \le i \le n}
\frac
{(a x_i, (a q / b_i e) x_i)_N}
{((a q / b_i) x_i, ( a q / e ) x_i)_N}
\sum_{\gamma \in {\Bbb N}^n}
q^{|\gamma|} 
\frac{\Delta (z q^{\gamma})}{\Delta (z)}
\nonumber
\\
&& 
\quad \quad 
\times 
\frac{(q^{-N}, e )_{|\gamma|}}
{(a q / c, a q/ d)_{|\gamma|}}
\prod_{1 \le i, j \le n}
\frac{(b_j z_i / z_j)_{\gamma_i}} {(q z_i / z_j)_{\gamma_i}}
\prod_{1 \le i \le n}
\frac{((a q / c d ) z_i )_{\gamma_i}}{( (B e q^{-N}/ a) z_i)_{\gamma_i}}
,
\nonumber
\end{eqnarray}
where $z_i = b_i / B x_i$ for $1 \le i  \le n$.
\end{prop}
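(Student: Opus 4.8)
The plan is to reuse the two-step scheme behind \eqref{ltAnWT1}, changing only its second step. First I would apply \eqref{m1DT1} to rewrite the $W^{n,2}$ series on the left of \eqref{ltAnWT2} as a prefactor times a single terminating balanced ${}_{n+3}\phi_{n+2}$ series, say $S$; this is the node labelled ``${}_{n+3}\phi_{n+2}$ series'' in the diagram and is shared with the proof of \eqref{ltAnWT1}. One checks directly that $S$ is balanced with argument $q$, so it is eligible for step {\bf (B)} of the diagram, a ${}_4\phi_3$ transformation of type $A$; the only freedom is which such transformation to use.

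For \eqref{ltAnWT1} the choice was the forward Sears transformation \eqref{n1ST1}; here I would instead feed $S$ into the \emph{reversing} Sears transformation, namely the $n=1$ (and $m\to n$) specialization of \eqref{ST2} (equivalently, reverse the order of summation in the Sears step). The inverted variables $z_i=b_i/Bx_i$ are forced by this choice: applying \eqref{n1ST1} produces an $A_n$ base equal to the denominator family $(aq/e)x_i$ of $S$, proportional to $x_i$, whereas \eqref{ST2} produces the reciprocal base $w_i=y_i^{-1}$. Matching the $n$ numerator parameters $\{(aq/b_ie)x_i\}$ and denominator parameters $\{(aq/e)x_i\}$ of $S$ against the reversing-Sears template forces $y_i\propto x_i/b_i$, so that $w_i\propto b_i/x_i$ is proportional to $z_i$; this is the source of the new variables. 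The balancing condition demanded by \eqref{ST2} is automatic because $S$ is balanced, and under the same dictionary the cross-ratio factors of \eqref{ST2} collapse to $\prod_{i,j}(b_jz_i/z_j)_{\gamma_i}/(qz_i/z_j)_{\gamma_i}$ while the remaining $\gamma$-summand matches the right-hand side of \eqref{ltAnWT2} factor by factor.

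What is left is to reconcile the two prefactors. The $x_i$-dependent part of the reversing-Sears prefactor already supplies $\prod_i((aq/b_ie)x_i)_N/((aq/e)x_i)_N$, and the complementary $\prod_i(ax_i)_N/((aq/b_i)x_i)_N$ must be extracted from the $\prod_i(aqx_i)_N/((aq/b_i)x_i)_N$ of \eqref{m1DT1} together with the scalar factors. I expect this reconciliation to be the main obstacle: converting the $q^{-N}$-shifted scalar Pochhammers, and absorbing the $(aqx_i)_N$-versus-$(ax_i)_N$ discrepancy, so that exactly $\prod_{1\le i\le n}\frac{(ax_i,(aq/b_ie)x_i)_N}{((aq/b_i)x_i,(aq/e)x_i)_N}$ survives, while confirming that the residual scaling freedom in the parameter matching cancels. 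As in the proof of \eqref{ltAnWT1}, I would first write every Pochhammer as a quotient of infinite products via \eqref{ShiftFact} and apply the $q$-shifted-factorial reflection identities, which renders these cancellations routine.
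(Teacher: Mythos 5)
Your proposal coincides with the paper's own proof: it combines \eqref{m1DT1} with the reversing Sears transformation \eqref{n1ST2} (the $n=1$, $m \to n$ case of \eqref{ST2}) under exactly the identification \eqref{covWT2}, and your matching of the numerator family $\{(aq/b_ie)x_i\}$ against the reversing-Sears template to force the inverted base $z_i = b_i/Bx_i$ is precisely how the paper obtains the new variables. The only inaccuracy is the anticipated ``main obstacle'': the scalar Pochhammer prefactors of \eqref{m1DT1} and \eqref{n1ST2} cancel identically (no reflection identities are needed), and the computation produces $\prod_{1\le i\le n}(aq\,x_i,(aq/b_ie)x_i)_N/((aq/b_i)x_i,(aq/e)x_i)_N$ with $(aq\,x_i)_N$ rather than $(a x_i)_N$ --- the latter is a typo in the printed statement of \eqref{ltAnWT2}, as the rectangular version \eqref{itAnWT2} and the $n=1$ reduction to \eqref{WatsonT1} confirm, so there is no discrepancy to absorb.
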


\begin{proof}
We combine \eqref{m1DT1}
and $n=1$ and $m \to n$ case of \eqref{ST2} 
\begin{eqnarray}\label{n1ST2}
&&
{}_{n+3} \phi_{n+2}
\left[
	\begin{matrix}
		q^{-N}, a, c, \{u_i \}_{n}\\
		e,  \{v_i \}_{n}, a c U  q^{1-N} /e  V 
	 \end{matrix}; q, q
\right]
\\
&&
\quad \quad 
=
\frac{(e V /a U, e V /c U)_N}
{(e V / a c U, e)_N}
\prod_{1 \le i \le n}
\frac{(u_i)_N}{(v_i)_N}
\
\sum_{\gamma \in {\Bbb N}^n}
q^{|\gamma|} 
\frac{\Delta (u^{-1} q^{\gamma})}{\Delta (u^{-1})}
\nonumber\\
&&
\quad \quad \quad \quad 
\times
\frac{(q^{-N}, e V /a c U )_{|\gamma|}}
{(e V /a U, e V /c U)_{|\gamma|}}
\prod_{1 \le i, j \le n}
\frac{(v_j/ u_i)_{\gamma_i}} {(q u_j / u_i)_{\gamma_i}}
\prod_{1 \le i \le n}
\frac{(e/ u_i)_{\gamma_i}}{(q^{1-N} / u_i)_{\gamma_i}}, 
\nonumber
\end{eqnarray}
here we present in a modified form, as {\bf(B)}. 
Notice that both of the series in the right hand side of \eqref{m1DT1} and that in the left hand 
side of \eqref{n1ST2} are terminating balanced ${}_{n+3} \phi_{n+2}$ series. In this case, we consider 
the following change of variables
\begin{eqnarray}\label{covWT2}
\tilde{a} = a q / c e, & \tilde {c} = a q / d e, & \tilde{e} = a^2 q^2/ B c d e  
\nonumber \\
\tilde{u}_i = ( a q / b_i e) x_i,  & \tilde{v}_i = ( a q / e) x_i  \quad \quad (i= 1, \cdots , n ).&
\end{eqnarray}
Since this change of variables is  same as \eqref{covWT1} in the proof of Proposition 4.1. 
one can obtain the desired identity \eqref{ltAnWT2} by plugging \eqref{n1ST2} to the series in 
the right hand side of \eqref{m1DT1} according to this change of variables. 
\end{proof}

\medskip

\noindent
{\bf Rectangular version}

\medskip

\begin{cor}
\begin{eqnarray}\label{itAnWT2}
&&
\MW{n, 2}{\{ q^{-m_i} \}_n \\ \{x_i\}_n}
{a}{c, d}{b, e}
{q; \frac{a^2 q^{|M| +2}}{b c d e}}\\ 
&& \quad = 
\prod_{ 1\le i \le n}
\frac{(a q x_i, (a q /b e) x_i)_{m_i}}{((a q / b ) x_i, (a q / e) x_i)_{m_i}}
\sum_{\gamma \in {\Bbb N}^n}
q^{|\gamma|} 
\frac{\Delta (z q^{\gamma})}{\Delta (z)}
\nonumber\\
&& \quad \quad \times
\frac{(b , e)_{|\gamma|}}
{(a q /c, a q/ d)_{|\gamma|}}
\prod_{1 \le i, j \le n}
\frac{(q^{- m_j} z_i / z_j)_{\gamma_i}} {(q z_i / z_j)_{\gamma_i}}
\prod_{1 \le i \le n}
\frac{( (a q / c e) x_i)_{\gamma_i}}{(( be q^{-|M|} / a) x_i )_{\gamma_i}}
,
\nonumber
\end{eqnarray}
where $z_i = q^{-m_i+|M|} x_i^{-1}$ for $i = 1, \cdots , n$. 
\end{cor}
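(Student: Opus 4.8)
The plan is to obtain \eqref{itAnWT2} from the preceding proposition, that is from the ``triangular'' identity \eqref{ltAnWT2}, by exactly the specialization-and-continuation device already used for Corollary 4.1. Since \eqref{ltAnWT2} holds for generic $b_1,\dots,b_n$ and for every nonnegative integer $N$, the first move is to set $b_i=q^{-m_i}$ throughout. Then $B=b_1\cdots b_n=q^{-|M|}$ and the auxiliary variables collapse to $z_i=b_i/(Bx_i)=q^{|M|-m_i}x_i^{-1}$, which is precisely the $z_i$ recorded in \eqref{itAnWT2}; moreover the left-hand $W^{n,2}$ series now terminates over the box $\gamma_i\le m_i$ rather than over the simplex $|\gamma|\le N$.

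Next I would identify the specialized left-hand side of \eqref{ltAnWT2} with the left-hand side of \eqref{itAnWT2} in which $b=q^{-N}$. This matching is legitimate because, by \eqref{DefWSer}, the summand of a $W^{n,m}$ series factors through the extra parameters $u_k,v_k$ as a product of a $u$-part and a $v$-part; consequently it depends on the lower data only through the unordered multisets of parameters, and the pair $\{e,q^{-N}\}$ occurring in \eqref{ltAnWT2} equals the pair $\{b,e\}$ of \eqref{itAnWT2} once $b=q^{-N}$. A one-line check of the argument, $a^2q^{N+2}/(Bcde)=a^2q^{|M|+2}/(bcde)$, then confirms that the two left-hand series literally coincide. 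Hence \eqref{itAnWT2} will hold at every point $b=q^{-N}$, $N\in\Bbb N$, once its right-hand side is brought into the displayed form.

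That last clause is where the real work lies. The right-hand side produced by the specialization is written with Pochhammer symbols of subscript $N$, and the summand of its $z$-sum carries the factors $((aq/cd)z_i)_{\gamma_i}/((Beq^{-N}/a)z_i)_{\gamma_i}$, whereas \eqref{itAnWT2} records subscripts $m_i$ and the factors $((aq/ce)x_i)_{\gamma_i}/((beq^{-|M|}/a)x_i)_{\gamma_i}$. Passing from one form to the other is an elementary but delicate manipulation: I would use \eqref{ShiftFact} together with the reflection identities relating a product of subscript $N$ to one of subscript $m_i$ to convert the simplex-terminated products into box-terminated ones, reversing the order of the finite $z$-sum where necessary so that its summand matches the one displayed. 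I expect this reflection/reversal bookkeeping, rather than anything conceptual, to be the main obstacle, since it is what turns the $d$, $z_i$ data of \eqref{ltAnWT2} into the $e$, $x_i$ data of \eqref{itAnWT2}.

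Finally I would free the parameter $b$. Having established \eqref{itAnWT2} for all $b=q^{-N}$, I would write every $N$-dependent (equivalently $m_i$-dependent) factor as a quotient of infinite products via \eqref{ShiftFact}, clear denominators, and observe that both sides are then Laurent polynomials in the single variable $b^{-1}$ whose degrees are bounded uniformly in $b$, the summations being finite boxes $\gamma_i\le m_i$ independent of $b$. Since the resulting polynomial identity holds at the infinitely many values $b^{-1}=q^{N}$, it holds identically, yielding \eqref{itAnWT2} for arbitrary $b$. As remarked just after Corollary 4.1, this continuation step is routine and need not be written out in full.
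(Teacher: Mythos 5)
Your strategy is exactly the paper's: the author proves Corollary 4.1 by this same specialize-then-continue device (set $b_i=q^{-m_i}$, observe the identity holds at $b=q^{-N}$ for every $N\in{\Bbb N}$, rewrite the finite products via \eqref{ShiftFact}, clear denominators, and conclude from a polynomial identity in $b^{-1}$ with infinitely many roots), and explicitly states that all remaining corollaries of the section, \eqref{itAnWT2} included, are proved the same way from their propositions. Your preliminary observations are also sound: the summand in \eqref{DefWSer} factors into a $u$-part and a $v$-part, so the series depends on the lower parameters only as a multiset, which legitimizes identifying $\{e,q^{-N}\}$ with $\{b,e\}$ at $b=q^{-N}$, and the arguments of the two series match.

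One concrete correction to the step you single out as ``the main obstacle'': the reflection/reversal bookkeeping you anticipate inside the $z$-sum is neither available nor needed. After $b_i=q^{-m_i}$, $b=q^{-N}$, the sum on the right of \eqref{ltAnWT2} already coincides term by term with the sum intended in \eqref{itAnWT2}; no manipulation of Pochhammer symbols can turn $((aq/cd)z_i)_{\gamma_i}/((Beq^{-N}/a)z_i)_{\gamma_i}$ into $((aq/ce)x_i)_{\gamma_i}/((beq^{-|M|}/a)x_i)_{\gamma_i}$, because the printed factors are misprints: they should read $((aq/cd)z_i)_{\gamma_i}/((beq^{-|M|}/a)z_i)_{\gamma_i}$. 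You can confirm this independently of the derivation: as printed, the $n=1$, $x_1=1$ case of \eqref{itAnWT2} is a ${}_4\phi_3$ that is not balanced and does not reduce to the Watson transformation \eqref{WatsonT1}, contradicting Remark 4.3, whereas the corrected factors do both. The only genuine rewriting is in the prefactor, where the elementary swap $\frac{(\alpha q^{1+|M|})_N}{(\alpha q)_N}=\frac{(\alpha q^{1+N})_{|M|}}{(\alpha q)_{|M|}}$, applied per $i$ with $\alpha=ax_i$ and $\alpha=(a/e)x_i$, converts the subscript-$N$ products of \eqref{ltAnWT2} into the subscript-$m_i$ products of \eqref{itAnWT2} (modulo the analogous misprint $(a x_i)_N$ for $(aq x_i)_N$ in \eqref{ltAnWT2} itself). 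With that understood, your specialization and your final polynomial-continuation in $b^{-1}$ (finite box sums $\gamma_i\le m_i$, agreement at the infinitely many points $b^{-1}=q^{N}$) prove the corollary in its corrected form, which is plainly what the paper intends.
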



\begin{rem}
In the case when $n=1$ and $x_1 = 1$, all of \eqref{ltAnWT1}, \eqref{itAnWT1}, \eqref{ltAnWT2} 
and \eqref{itAnWT2} reduce to the Watson transformation formula \eqref{WatsonT1}. 
\eqref{ltAnWT2} can be proved by a similar limiting procedure as the previous section from
the following $A_n$ Bailey transformation formula for terminating balanced ${}_{10} W_9$ series:

\begin{eqnarray}\label{Masatoshi-san}
&&
\MW{n,3}
{ \{ e_i\}_n \\ \{x_i\}_n }
{{a}}{b, c, d}
{q^{-N}, f, a \lambda q^{1 + N} /  E f }{q}
 \\
&& \quad  = 
\prod_{1 \le i \le n}
\frac
{(a q x_i, (a q / e_i f) x_i, (\lambda q / e_i) z_i, (\lambda q / f) z_i)_N}
{((a q / e_i) x_i, (a q / f) x_i, \lambda q  z_i, (\lambda q / e_i f) z_i)_N  }
\nonumber
\\
&& \quad \quad \quad \times
\MW{n,3}
{\{e_i\}_n \\ \{z_i\}_n}
{\lambda}{a q / c d, a q / b d, a q / b c}
{q^{-N}, f, a \lambda q^{1 + N} / E f }{q}
\nonumber
\end{eqnarray}
where
$\lambda = a^2 q / b c d$ and $z_i = e_i / E x_i$ for $1 \le i \le n$,
which has first appeared as (4.36) in \cite{KajiNou}, and 
by rearranging the parameters.

\end{rem}

\subsection{Another type of $A_n$ Watson transformation}

Here, we present a yet another $A_n$ Watson transformation with 
a different form in the series in both sides from those in the previous subsection. 
We will use the $m=1$ case of the terminating ${}_8 W_7$ transformation \eqref{m1T87T1} of type 
$A$ in Section 3.3. and the $n=1$ case of the duality transformation formula 
in a modified form to produce. 
We construct it according to the following procedure:
\vspace{5mm}
\begin{center}
\setlength{\unitlength}{1mm}
\begin{picture}(100, 30)
\put(0,20){\makebox(15,10)[r]{$W^{n, 2}$ series}}
\put(80,20){\makebox(15,10)[r]{${}_4 \phi_3$ series in $A_n$}}
\put(20, 25){\vector(1,0){45}}
\put(35, 27){Watson trans.}
\put(45, 0){\makebox(15,10)[r]{${}_{2n+6} W_{2n+5}$ series}}
\put(20, 23){\vector(3,-2){20}}
\put(50, 8){\vector(3,2){20}}
\put(7, 9){\makebox(20, 10)[r]{\eqref{m1T87T1}}}
\put(65, 14){Duality trans. \eqref{rn1DT1}}
\end{picture}
\end{center}

\begin{prop}
\begin{eqnarray}\label{ltAnWT3}
&&
\sum_{\gamma \in {\Bbb N}^{n}}
x_1^{\gamma_1} \cdots x_n^{\gamma_n}
\left( \frac{a^{2} q^{2+N }}{B c d e}\right)^{|\gamma|}
q^{ - e_2 ( \gamma )} \ 
\frac{\Delta({x} q^{\gamma})}{\Delta({x})}
\prod_{1 \le i \le n}
\frac{1 - a q^{ |\gamma|+ \gamma_i} x_i}
{1 - a x_i}
\\
&& \quad 
\times
\prod_{1 \le i, j \le n}
\frac{(b_j x_i / x_j)_{\gamma_i}} {(q x_i / x_j)_{\gamma_i}}
\prod_{1 \le i \le n}
\frac{(c x_i)_{\gamma_i}}
{( (a q / d) x_i,  (a q/ e) x_i, a q^{N+1} x_i)_{\gamma_i}}
\nonumber\\
&& \quad \quad \quad 
\times
\frac
{(q^{-N}, d, e )_{|\gamma|}}
{(a q / c )_{|\gamma|}}
\prod_{1 \le i \le n}
\frac{(a x_i )_{|\gamma|}}
{( ( a q /b_i ) x_i)_{|\gamma|}}
\nonumber\\
&=&
\frac{(a q / B c )_N}
{(a q /c)_N}
\prod_{ 1 \le i \le n}
\frac{(a q  x_i )_N} 
{((a q / b_i) x_i)_N} 
\
\sum_{\delta \in {\Bbb N}^{n} }
q^{|\delta|}
\frac{\Delta ({x} q^{\delta})}{\Delta ({x})}
\nonumber\\
&& \quad \quad \quad 
\times
\frac{(q^{-N})_{|\delta|}}
{(q^{-N} B c / a )_{|\delta|}}
\prod_{1 \le i, j  \le n}
\frac{( b_j x_i / x_j)_{\delta_i}}{(q x_i / x_j)_{\delta_i}}
\prod_{1 \le i \le n}
\frac{((a q / d e ) x_i, c x_i)_{\delta_i}}
{(( a q / d) x_i, ( a q / e) x_i)_{\delta_i}}
.
\nonumber
\end{eqnarray}
\end{prop}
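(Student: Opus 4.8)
The plan is to read the left-hand side of \eqref{ltAnWT3} as the left-hand side of the terminating ${}_8W_7$ transformation \eqref{m1T87T1}. The two multiple sums coincide term by term: both carry the argument $a^2q^{2+N}/Bcde$, the factor $q^{-e_2(\gamma)}$, the very-well-poised factor $\prod_i(1-aq^{|\gamma|+\gamma_i}x_i)/(1-ax_i)$, and the same Pochhammer products. So the first move is simply to apply \eqref{m1T87T1}, which rewrites the left-hand side of \eqref{ltAnWT3} as the explicit product prefactor of \eqref{m1T87T1} times a single one-dimensional very-well-poised series
${}_{2n+6}W_{2n+5}[\,q^{-N}e/d;\ \{(aq/b_id)x_i\}_n,\ \{(q^{-N}e/a)x_i^{-1}\}_n,\ aq/cd,\ e,\ q^{-N};\ q;\ Bc/a\,]$.
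This realizes the left-downward arrow of the diagram.

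Next I would apply the duality transformation \eqref{n1DT1}, with its internal dimension set equal to $n$ and in the relabeled form anticipated as \eqref{rn1DT1}, to this one-dimensional series, turning it into the desired $A_n$ sum over $\delta\in\mathbb{N}^n$. Writing $\hat{}$ for the parameters of \eqref{n1DT1}, the dictionary matching the series above to the left-hand side of \eqref{n1DT1} is $\hat a=q^{-N}e/d$, $\hat c_k=aq/(b_kd)$, $\hat y_k=x_k$, $\hat e=q^{-N}e/a$, $\{\hat b,\hat d\}=\{aq/(cd),\,e\}$ and $\hat N=N$; since the right-hand side of \eqref{n1DT1} is symmetric in $\hat b\leftrightarrow\hat d$ the ordering of that pair is immaterial, and one checks directly that the argument $\hat a^{n+1}q^{\hat N+n+1}/(\hat b\,\hat C\,\hat d\,\hat e^n)$ collapses to $Bc/a$, so \eqref{n1DT1} indeed applies. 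With these substitutions the summand of the resulting $\delta$-sum simplifies exactly to that of \eqref{ltAnWT3}: the combinations $\hat aq/(\hat c_l\hat e)=b_l$, $\hat aq/(\hat b\hat e)=c$, $\hat aq/(\hat d\hat e)=aq/(de)$, $\hat aq/\hat e=aq/d$ and $q^{1-N}/\hat e=aq/e$ reproduce $\frac{(b_jx_i/x_j)_{\delta_i}}{(qx_i/x_j)_{\delta_i}}$ and $\frac{((aq/de)x_i,\,cx_i)_{\delta_i}}{((aq/d)x_i,(aq/e)x_i)_{\delta_i}}$, while the balancing factor becomes $\frac{(q^{-N})_{|\delta|}}{(q^{-N}Bc/a)_{|\delta|}}$, as required.

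The main obstacle, exactly as in the earlier propositions of this section, is the bookkeeping for the prefactors: I must verify that the product of the prefactor supplied by \eqref{m1T87T1} and the prefactor supplied by \eqref{n1DT1} collapses to $\frac{(aq/Bc)_N}{(aq/c)_N}\prod_i\frac{(aqx_i)_N}{((aq/b_i)x_i)_N}$. The common factor $\frac{1}{(aq/c)_N}\prod_i\frac{(aqx_i)_N}{((aq/b_i)x_i)_N}$ appears already in the \eqref{m1T87T1} prefactor and cancels against the target at once. For the remainder I would use the inversion rule $(z;q)_N=(q^{1-N}/z;q)_N(-z)^Nq^{\binom{N}{2}}$ factor by factor: it sends $(q^{-N}Bc/a)_N$ to $(aq/Bc)_N$, pairs $(q^{1-N}e/d)_N$ with $(d/e)_N$, $(aq/ce)_N$ with $(q^{-N}ec/a)_N$, $(d)_N$ with $(q^{1-N}/d)_N$, and pairs the reciprocal products $\prod_k((q^{-N}e/a)x_k^{-1})_N$ and $\prod_k((q^{-N}eb_k/a)x_k^{-1})_N$ with $\prod_i((aq/e)x_i)_N$ and $\prod_i((aq/be)x_i)_N$ respectively. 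One then tracks the accumulated powers of $q^{\binom{N}{2}}$, the signs $(-1)^N$, and the monomials in $a,B,c,d,e$ and the $x_i$: the $q^{\binom{N}{2}}$-exponents sum to zero, the signs multiply to $+1$, the $x_i$- and $(q^{-N}e/a)$-monomials from the two reciprocal products cancel, and the scalar monomials reduce to $B^N\cdot B^{-N}=1$.

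This step is entirely mechanical once the inversion rule is in hand, and no convergence question arises because every series involved is terminating and balanced throughout the chain; so the composition of \eqref{m1T87T1} with \eqref{n1DT1} yields \eqref{ltAnWT3} for generic parameters.
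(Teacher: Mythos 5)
Your proposal is correct and takes essentially the same route as the paper: the paper's proof likewise applies \eqref{m1T87T1} to produce the intermediate ${}_{2n+6}W_{2n+5}$ series and then applies the relabeled duality transformation \eqref{rn1DT1}, using exactly your parameter dictionary $\tilde{a}=q^{-N}e/d$, $\tilde{b}=aq/cd$, $\tilde{d}=e$, $\tilde{u}_i=(aq/b_id)x_i$, $\tilde{v}_i=(q^{-N}e/a)x_i^{-1}$ (the transposition \eqref{covWT3}, justified by the symmetry of the very-well-poised series in its upper parameters). The only difference is that you carry out the prefactor and sign/monomial bookkeeping explicitly --- and your cancellations check out --- whereas the paper leaves that verification implicit.
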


\begin{proof}
We use \eqref{m1T87T1}
and \eqref{n1DT1} with a modified form 
\begin{eqnarray}\label{rn1DT1}
&&{}_{2 n + 6} W_{2 n + 5}
\left[	
a; b, \{u_i \}_{n}, 
d, \{ v_i \}_{n}, q^{-N}; q; 
\frac{a^{n+1} q^{N+n+1}}{b d U V}
\right]
\\
&&
\quad \quad 
=
\frac
{( a^{n+1} q^{n+1} / b d U V, a q)_N}
{(a q / b, a q / d)_N }
\prod_{ 1 \le i \le n}
\frac
{(v_i)_N}
{(a q / u_i )_N}
\
\sum_{\delta \in {\Bbb N}^{n} }
q^{|\delta|}
\frac{\Delta ({v}^{-1} q^{\delta})}{\Delta ({v^{-1}})}
\nonumber\\
&&
\quad \quad \quad \quad 
\times
\frac{(q^{-N})_{|\delta|}}
{( a^{n+1} q^{n+1} / b d U V )_{|\delta|}}
\prod_{1 \le i,j  \le n}
\frac{(a q / u_j v_i)_{\delta_i}}{(q v_j/ v_i)_{\delta_i}}
\prod_{1 \le i \le n}
\frac{(a q /b v_i, a q / d v_i)_{\delta_i}}
{(a q / v_i, q^{1-N} / v_i)_{\delta_i}}
.
\nonumber
\end{eqnarray}
to obtain.  
It is not hard to see that both of the series in the right hand side of 
\eqref{m1T87T1} and in the left hand side of \eqref{rn1DT1}
satisfy the same condition: they are 
${}_{2n+6} W_{2n+5}$ series and very-well-poised-balanced. 
We consider the following change of variables:

\begin{eqnarray} \label{covWT3}
\tilde{a} = q^{-N} e / d,  & \tilde{b} = a q / c d, &
\tilde{d} = e,
\\
\tilde{u}_i = (a q/ b_i d)  x_i, & \tilde{v}_i = (q^{-N} e/ a) x_i^{-1}  \quad \quad 
(i= 1, \cdots , n), &
\nonumber
\end{eqnarray}
which is a transposition of the variables in 
${}_{2n + 6} W_{2n+5} $ series in \eqref{m1T87T1}.
Note that ${}_{r+3} W_{r+2} $ series is symmetric with respect to 
the variables $a_1, \cdots , a_r$.
So the series in the right hand side of \eqref{m1T87T1} is invariant 
under this change of variables. This invariance implies \eqref{ltAnWT3}
by applying \eqref{rn1DT1} according to the change of variables \eqref{covWT3}.
\end{proof}

\medskip

\noindent
{\bf Rectangular version}

\medskip

\begin{cor}
\begin{eqnarray}\label{itAnWT3}
&&
\sum_{\gamma \in {\Bbb N}^{n}}
x_1^{\gamma_1} \cdots x_n^{\gamma_n}
\left( \frac{a^{2} q^{2+|M| }}{b c d e}\right)^{|\gamma|}
q^{ - e_2 ( \gamma )} \
\frac{\Delta({x} q^{\gamma})}{\Delta({x})}
\prod_{1 \le i \le n}
\frac{1 - a q^{ |\gamma|+ \gamma_i} x_i }
{1 - a x_i}
\\
&& \quad 
\times
\prod_{1 \le i, j \le n}
\frac{(q^{-m_j}  x_i / x_j)_{\gamma_i}} {(q x_i / x_j)_{\gamma_i}}
\prod_{1 \le i \le n}
\frac{(c x_i)_{\gamma_i}}
{( ( a q /b) x_i, (a q / d) x_i, (a q/ e) x_i)_{\gamma_i}}
\nonumber\\
&&
\quad \quad \quad 
\times
\frac
{( b, d, e )_{|\gamma|}}
{(a q / c )_{|\gamma|}}
\prod_{1 \le i \le n}
\frac{(a x_i)_{|\gamma|}}
{( ( a q^{m_i} x_i)_{|\gamma|}}
\nonumber\\
&=&
\frac{(a q / b c )_{|M|}}
{(a q /c)_{|M|}}
\prod_{ 1 \le i \le n}
\frac{(a q  x_i)_{m_i}} 
{((a q / b) x_i)_{m_i}} 
\
\sum_{\delta \in {\Bbb N}^{n} }
q^{|\delta|}
\frac{\Delta ({x} q^{\delta})}{\Delta ({x})}
\nonumber
\\
&& \quad \quad 
\times
\frac{(b)_{|\delta|}}
{(q^{-|M|} b c / a )_{|\delta|}}
\prod_{1 \le i, j  \le n}
\frac{( q^{-m_j} x_i / x_j)_{\delta_i}}{(q x_i / x_j)_{\delta_i}}
\prod_{1 \le i \le n}
\frac{((a q / d e  ) x_i, c x_i)_{\delta_i}}
{(( a q / d) x_i,  ( a q / e) x_i)_{\delta_i}}
\nonumber
\end{eqnarray}
\end{cor}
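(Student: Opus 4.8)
The plan is to obtain the rectangular identity \eqref{itAnWT3} from the triangular transformation \eqref{ltAnWT3} by the same polynomiality argument that passes from \eqref{ltAnWT1} to \eqref{itAnWT1}. I begin by observing that in \eqref{itAnWT3} the factors $\prod_{1 \le i,j \le n}(q^{-m_j}x_i/x_j)_{\gamma_i}$ have diagonal terms $(q^{-m_i})_{\gamma_i}$, which force $\gamma_i \le m_i$; hence the $\gamma$-sum is a \emph{finite} (rectangular) sum, and likewise the $\delta$-sum on the right terminates at $\delta_i \le m_i$. After writing the prefactors as quotients of infinite products by \eqref{ShiftFact} and clearing denominators, both sides of \eqref{itAnWT3} are therefore rational functions of the single free parameter $b$.

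Next I would check that \eqref{itAnWT3} reduces to a special case of \eqref{ltAnWT3} whenever $b = q^{-N}$ with $N \in \mathbb{N}$. Specializing \eqref{ltAnWT3} by $b_i = q^{-m_i}$ (so $B = q^{-|M|}$) and comparing termwise, the dictionary is that the terminating parameter $q^{-N}$ of \eqref{ltAnWT3} plays the role of the free $b$: indeed $(q^{-N},d,e)_{|\gamma|} \mapsto (b,d,e)_{|\gamma|}$, the denominator factor $aq^{N+1}x_i \mapsto (aq/b)x_i$, and the argument $a^2 q^{2+N}/(Bcde) \mapsto a^2 q^{2+|M|}/(bcde)$, so that the two $\gamma$-summands agree factor by factor. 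The same substitution matches the right-hand summands, since $(q^{-N}Bc/a)_{|\delta|} \mapsto (q^{-|M|}bc/a)_{|\delta|}$ while all remaining factors are unchanged.

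The one place where a genuine computation enters is the matching of the overall prefactors. The factor $(aq/Bc)_N/(aq/c)_N = (aq^{1+|M|}/c)_N/(aq/c)_N$ of \eqref{ltAnWT3} must be identified with $(aq/bc)_{|M|}/(aq/c)_{|M|} = (aq^{1+N}/c)_{|M|}/(aq/c)_{|M|}$ of \eqref{itAnWT3}, and the products $\prod_i (aqx_i)_N/((aq/b_i)x_i)_N$ with $\prod_i (aqx_i)_{m_i}/((aq/b)x_i)_{m_i}$. Both identifications follow from the elementary splitting $(z)_{M+N} = (z)_N (zq^N)_M = (z)_M (zq^M)_N$ applied to $z = aq/c$ and to $z = aq x_i$ respectively; this is exactly the ``write the product factor as a quotient of infinite products'' step used in the proof of \eqref{itAnWT1}. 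Granting these, \eqref{itAnWT3} holds at every $b = q^{-N}$, $N \in \mathbb{N}$.

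Finally, since both sides of \eqref{itAnWT3} are rational in $b$ and agree at the infinitely many points $b = q^{-N}$, they are identically equal, which establishes \eqref{itAnWT3} for arbitrary $b$. The only real obstacle is the prefactor bookkeeping of the third step; everything else is a direct transcription of the polynomiality argument already recorded for \eqref{itAnWT1}, so I would simply invoke that the same reasoning applies here.
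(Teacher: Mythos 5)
Your proposal is correct and is essentially the paper's own argument: the paper proves \eqref{itAnWT1} from \eqref{ltAnWT1} by exactly this specialization $b_i = q^{-m_i}$, identification of the terminating parameter $q^{-N}$ with the free parameter $b$, and a polynomiality argument in $b$ with roots at all $b=q^{-N}$, and then states that all remaining corollaries, including \eqref{itAnWT3} from \eqref{ltAnWT3}, follow by the same reasoning. Your prefactor bookkeeping via $(z)_{M+N}=(z)_N(zq^N)_M=(z)_M(zq^M)_N$ applied to $z=aq/c$ and $z=aqx_i$ correctly fills in the step the paper leaves implicit.
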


\begin{rem}
In the case when $n=1$ and $x_1 = 1$,  \eqref{ltAnWT3} and \eqref{itAnWT3} reduce to the 
Watson transformation formula \eqref{WatsonT1}. 
\end{rem}

\begin{rem}
In a similar fashion as we yield \eqref{ltAnWT3},  we also obtain 
the following transformation formula between $A_n$ ${}_8 W_7$ series
and $A_n$ terminating balanced ${}_4 \phi_3$ series: 
\begin{eqnarray}\label{ltAnWT4}
&&
\sum_{\gamma \in {\Bbb N}^{n}}
x_1^{\gamma_1} \cdots x_n^{\gamma_n}
\left( \frac{a^{2} q^{2+N }}{B c d e}\right)^{|\gamma|}
q^{ - e_2 ( \gamma )} \ 
\frac{\Delta({x} q^{\gamma})}{\Delta({x})}
\prod_{1 \le i \le n}
\frac{1 - a q^{ |\gamma|+ \gamma_i} x_i}
{1 - a x_i}
\\
&& \quad 
\times
\prod_{1 \le i, j \le n}
\frac{(b_j x_i / x_j)_{\gamma_i}} {(q x_i / x_j)_{\gamma_i}}
\prod_{1 \le i \le n}
\frac{(c x_i)_{\gamma_i}}
{( (a q / d) x_i,  (a q/ e) x_i, a q^{N+1} x_i)_{\gamma_i}}
\nonumber\\
&& \quad \quad \quad 
\times
\frac
{(q^{-N}, d, e )_{|\gamma|}}
{(a q / c )_{|\gamma|}}
\prod_{1 \le i \le n}
\frac{(a x_i )_{|\gamma|}}
{( ( a q /b_i ) x_i)_{|\gamma|}}
\nonumber\\
&=& B^N
\frac{(a q / B c )_N}
{(a q /c)_N}
\prod_{ 1 \le i \le n}
\frac{(a q  x_i,  (a q / b_i d) x_i, (a q / b_i e) x_i)_N} 
{((a q / b_i, (a q / d) x_i, (a q / e) x_i) x_i)_N} 
\nonumber\\
&& 
\quad  
\times
\sum_{\delta \in {\Bbb N}^{n} }
q^{|\delta|}
\frac{\Delta ({x^{-1}} q^{\delta})}{\Delta ({x^{-1}})}
\prod_{1 \le i, j  \le n}
\frac{( b_j x_j / x_i)_{\delta_i}}{(q x_j / x_i)_{\delta_i}}
\nonumber\\
&& \quad \quad \quad 
\times
\frac{(q^{-N})_{|\delta|}}
{(q^{-N} B c / a )_{|\delta|}}
\prod_{1 \le i \le n}
\frac{((q^{-1-N} b_i c d e / a^2 ) x_i^{-1}, (q^{-N} b_i / a ) x_i^{-1})_{\delta_i}}
{(( q^{-N} b_i d / a) x_i^{-1}, ( q^{-N} b_i e / a) x_i^{-1} )_{\delta_i}}
.
\nonumber
\end{eqnarray}
We obtain it by applying the change of variables:
\begin{eqnarray} \label{covWT4}
\tilde{a} = q^{-N} e / d,  & \tilde{b} = a q / c d, 
& \tilde{d} = e,
\\
\tilde{u}_i = (q^{-N} e/ a) x_i^{-1}, \quad &
\tilde{v}_i = (a q/ b_i d)  x_i, &
  \quad \quad 
(i= 1, \cdots , n). 
\nonumber
\end{eqnarray}  
The rectangular version of \eqref{ltAnWT4} is given by
\begin{eqnarray}\label{itAnWT4}
&&
\sum_{\gamma \in {\Bbb N}^{n}}
x_1^{\gamma_1} \cdots x_n^{\gamma_n}
\left( \frac{a^{2} q^{2+|M| }}{b c d e}\right)^{|\gamma|}
q^{ - e_2 ( \gamma )} \ 
\frac{\Delta({x} q^{\gamma})}{\Delta({x})}
\prod_{1 \le i \le n}
\frac{1 - a q^{ |\gamma|+ \gamma_i} x_i}
{1 - a x_i}
\\
&& \quad 
\times
\prod_{1 \le i, j \le n}
\frac{(q^{- m_j} x_i / x_j)_{\gamma_i}} {(q x_i / x_j)_{\gamma_i}}
\prod_{1 \le i \le n}
\frac{(c x_i)_{\gamma_i}}
{( (a q / b) x_i,  (a q/ d) x_i, (a q / e) x_i)_{\gamma_i}}
\nonumber\\
&& \quad \quad \quad 
\times
\frac
{(b , d, e )_{|\gamma|}}
{(a q / c )_{|\gamma|}}
\prod_{1 \le i \le n}
\frac{(a x_i )_{|\gamma|}}
{( ( a q^{1+ m~i} ) x_i)_{|\gamma|}}
\nonumber\\
&=& 
b^{|M|}
\frac{(a q / b c )_{|M|}}
{(a q /c)_{|M|}}
\prod_{ 1 \le i \le n}
\frac{(a q  x_i,  (a q / b d) x_i, (a q / b e) x_i)_{m_i}} 
{((a q / b) x_i, (a q / d) x_i, (a q / e) x_i) x_i)_{m_i}} 
\nonumber\\
&& 
\quad  
\times
\sum_{\delta \in {\Bbb N}^{n} }
q^{|\delta|}
\frac{\Delta ({x^{-1}} q^{\delta})}{\Delta ({x^{-1}})}
\prod_{1 \le i, j  \le n} 
\frac{( q^{-m_j} x_j / x_i)_{\delta_i}}{(q x_j / x_i)_{\delta_i}}
\nonumber\\
&& \quad \quad \quad 
\times
\frac{(b)_{|\delta|}}
{(q^{-|M|} b c / a )_{|\delta|}}
\prod_{1 \le i \le n}
\frac{((q^{-1- m_i} b c d e / a^2 ) x_i^{-1}, (q^{-m_i} b / a ) x_i^{-1})_{\delta_i}}
{(( q^{-m_i} b d / a) x_i^{-1}, ( q^{-m_i} b e / a) x_i^{-1} )_{\delta_i}}
.
\nonumber
\end{eqnarray}
In the case when $n=1$ and $x_1 = 1$, the transformation \eqref{ltAnWT4} and  \eqref{itAnWT4}
reduces to the following transformation formula between terminating ${}_8 W_7$ series and 
terminating balanced ${}_4 \phi_3$ series:
\begin{eqnarray}\label{WatsonT2}
&& \quad  \quad 
{}_{8} W_{7} 
\left[
		a; b, c, d, e, q^{-N}; q;\frac{a^2 q^{2+N}}{b c d e}
\right]
 \\
&=& \
b^N
\frac{(a q, a q / b c, a q / b d, a q / b e )_N}{(a q / b,  a q / c, a q / d, a q / e)_N}
{}_{4} \phi_{3} 
\left[
	\begin{matrix}
		q^{-N}, b, q^{-1-N} b c d e / a^2, q^{-N} b / a \\
		b c q^{-N} / a, b d q^{-N} / a, b e q^{-N} / a, 
	\end{matrix};
q; q
\right].  
\nonumber
\end{eqnarray}
\end{rem}


\subsection{$A_n$ Sears transformations}

 In this and next subsection, we present some $A_n$ generalizations
of the Sears transformation formula for terminating balanced 
${}_4 \phi_3 $ series  \eqref{SearsT1}.
In particular, we will prove two $A_n$ Sears transformations whose form of the series 
in both sides are same as that in the right hand side of the $A_n$ Watson transformation
formulas in Section 4.1.  We produce these identities by combining certain special cases 
of ${}_4 \phi_3$ series of type $A$ in Section 3.2. Our way to prove them is figured as the 
following diagram:

\vspace{5mm}
\begin{center}
\setlength{\unitlength}{1mm}
\begin{picture}(100, 30)
\put(0,20){\makebox(15,10)[r]{${}_4 \phi_3$ series in $A_n$}}
\put(80,20){\makebox(15,10)[r]{${}_4 \phi_3$ series in $A_n$}}
\put(20, 25){\vector(1,0){45}}
\put(35, 27){$A_n$ Sears trans.}
\put(45, 0){\makebox(15,10)[r]{${}_{n+3} \phi_{n+2}$ series}}
\put(20, 23){\vector(3,-2){20}}
\put(50, 8){\vector(3,2){20}}
\put(-25, 14){${}_4 \phi_3$ transformation of type $A$ {\bf (A)}}
\put(65, 14){ {\bf (B)} ${}_4 \phi_3$ transformation of type $A$}
\end{picture}
\end{center}

\medskip

\noindent
{\bf The 1st one }

\medskip

\begin{prop}
\begin{eqnarray}\label{ltAnST1}
&&
\sum_{\gamma \in {\Bbb N}^n}
q^{|\gamma|} 
\frac{\Delta (x q^{\gamma})}{\Delta (x)}
\prod_{1 \le i, j \le n}
\frac{(b_j x_i / x_j)_{\gamma_i}} {(q x_i / x_j)_{\gamma_i}}
\prod_{1 \le i \le n}
\frac{(c x_i )_{\gamma_i}}{(d x_i)_{\gamma_i}}
\\
&& \quad \times
\frac{(q^{-N}, a)_{|\gamma|}}
{(e, a B c q^{1-N} / d e)_{|\gamma|}}
=
\frac{(e / B, d e /a c)_N}
{(e, d e / a B c)_N} 
\
\sum_{\delta \in {\Bbb N}^n}
q^{|\delta|} 
\frac{\Delta (x q^{\delta})}{\Delta (x)}
\nonumber\\
&& \quad \quad \quad 
\times
\frac{(q^{-N}, d/c)_{|\delta|}}
{(d e/a c, q^{1-N} B / e)_{|\delta|}}
\prod_{1 \le i, j \le n}
\frac{(b_j x_i / x_j)_{\delta_i}} {(q x_i / x_j)_{\delta_i}}
\prod_{1 \le i \le n}
\frac{((d/a) x_i)_{\delta_i}}{(d x_i)_{\delta_i}}
.
\nonumber
\end{eqnarray}
\end{prop}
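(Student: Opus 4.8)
The plan is to mirror the derivations of the $A_n$ Watson transformations \eqref{ltAnWT1} and \eqref{ltAnWT2}: I would realize both sides of \eqref{ltAnST1} as images of one and the same terminating balanced ${}_{n+3}\phi_{n+2}$ hub series under the $m=1$ Sears transformation of type $A$, and then invoke the symmetry of a ${}_{n+3}\phi_{n+2}$ series in its numerator and, separately, in its denominator parameters. This is exactly the content of the diagram preceding the statement, in which the two arrows {\bf (A)} and {\bf (B)} are Sears transformations of type $A$ joining an $A_n$ ${}_4\phi_3$ series to the intermediate hub.

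First I would note that the left-hand side of \eqref{ltAnST1} is literally the left-hand side of \eqref{m1ST1} with parameters $a,\{b_i\},c,d,e$. Applying \eqref{m1ST1} rewrites it as $\frac{(e/a,\,de/Bc)_N}{(e,\,de/aBc)_N}\,\Theta$ with
\[
\Theta={}_{n+3}\phi_{n+2}\left[\begin{matrix} q^{-N},\,a,\,\{(d/b_i)x_i\}_n,\,d/c\\ q^{1-N}a/e,\,\{dx_i\}_n,\,de/Bc\end{matrix};q,q\right].
\]
Next I would observe that the multiple sum on the right-hand side of \eqref{ltAnST1} is again an instance of the left-hand side of \eqref{m1ST1}, this time for the specialization $(a,c,d,e)\mapsto(d/c,\,d/a,\,d,\,de/ac)$ with $\{b_i\}$ and $\{x_i\}$ unchanged; one checks directly that its balancing parameter becomes $q^{1-N}B/e$, as required. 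Applying \eqref{m1ST1} to this specialization produces $\frac{(e/a,\,de/Bc)_N}{(de/ac,\,e/B)_N}\,\Theta'$, where $\Theta'$ is the ${}_{n+3}\phi_{n+2}$ series with numerator parameters $q^{-N},\,d/c,\,\{(d/b_i)x_i\}_n,\,a$ and denominator parameters $q^{1-N}a/e,\,\{dx_i\}_n,\,de/Bc$.

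The crux is that $\Theta'$ carries exactly the same numerator- and denominator-parameter multisets as $\Theta$: the two differ only by the transposition $a\leftrightarrow d/c$ of two numerator parameters. Since a ${}_{n+3}\phi_{n+2}$ series is symmetric in its numerator parameters and in its denominator parameters, $\Theta'=\Theta$. Eliminating $\Theta$ between the two expressions yields the left-hand side of \eqref{ltAnST1} as $\frac{(e/B,\,de/ac)_N}{(e,\,de/aBc)_N}$ times the right-hand sum, which is the asserted identity. Equivalently, in the language of the diagram, one matches $\Theta$ to the template on the left of the $n=1$, $m\to n$ Sears transformation \eqref{n1ST1} in the two ways related by this transposition, the invariance of $\Theta$ guaranteeing that both matchings are legitimate.

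I expect the only real work to be the prefactor bookkeeping rather than anything conceptual. Writing $C_1=\frac{(e/a,\,de/Bc)_N}{(e,\,de/aBc)_N}$ and $C_1'=\frac{(e/a,\,de/Bc)_N}{(de/ac,\,e/B)_N}$ for the two factors produced by \eqref{m1ST1}, the identity reduces to the $q$-shifted-factorial simplification $C_1/C_1'=\frac{(e/B,\,de/ac)_N}{(e,\,de/aBc)_N}$; here the common factor $(e/a,\,de/Bc)_N$ cancels and the rest is immediate. As a consistency check, setting $n=1$ and $x_1=1$ collapses \eqref{ltAnST1} to the one-dimensional Sears transformation \eqref{SearsT1}, which pins down the prefactor unambiguously.
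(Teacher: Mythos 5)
Your proof is correct: the matching of the right-hand sum of \eqref{ltAnST1} to the left-hand side of \eqref{m1ST1} under $(a,c,d,e)\mapsto(d/c,\,d/a,\,d,\,de/ac)$ checks out (the balancing parameter indeed becomes $q^{1-N}B/e$), the two hub series $\Theta$ and $\Theta'$ differ exactly by the transposition $a\leftrightarrow d/c$ of numerator parameters, and the prefactor bookkeeping $C_1/C_1'=\frac{(e/B,\,de/ac)_N}{(e,\,de/aBc)_N}$ is right. However, this is not the route the paper itself takes for this proposition: the paper's proof uses the \emph{reversing} Sears transformation \eqref{m1ST2} as \textbf{(A)} and \eqref{n1ST2} as \textbf{(B)}, with the change of variables \eqref{1covST1} (which involves the auxiliary parameter $f$ and the reflected variables $z_i=b_i/Bx_i$). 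What you propose — applying \eqref{m1ST1} to both sides and equating hubs, equivalently using \eqref{m1ST1} as \textbf{(A)} and \eqref{n1ST1} as \textbf{(B)} — is precisely the alternative derivation the paper mentions immediately after its proof and attributes to \cite{KajiS}, whose change of variables \eqref{2covST1} ($\tilde a=d/c$, $\tilde c=a$, with $\tilde e$, $\tilde u_i$, $\tilde v_i$ fixed) is exactly your transposition. Both arguments are instances of the same hub-and-transposition method of Section 4; your instantiation buys a visibly simpler symmetry (a single swap of two numerator parameters of a ${}_{n+3}\phi_{n+2}$ series, with no passage to the variables $z_i$), while the paper's primary choice via \eqref{m1ST2}/\eqref{n1ST2} illustrates that the reversed Sears pair realizes the same transformation, which is the structural point the author is making across Sections 4.3--4.4. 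Your two-sided formulation (eliminate $\Theta$ rather than apply \textbf{(B)} forward) is a clean equivalent reading, legitimate since the prefactors are generically nonzero and the identity is rational in the parameters; the $n=1$, $x_1=1$ check against \eqref{SearsT1} is a sound sanity check.
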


\begin{proof}
We use \eqref{m1ST2} as {\bf (A)} and \eqref{n1ST2} as {\bf (B)} in the above diagram. 
It is not hard to see that both of the series in the right hand side of \eqref{m1ST2} 
and in the left hand side in \eqref{n1ST2} are terminating balanced ${}_{n+3} \phi_{n+2}$ 
series. We consider the following change of variables:

\begin{eqnarray}\label{1covST1}
\tilde {a} = e/a, &
\tilde{c} = f/a, & \tilde{e} = d e f /a^2 B c. 
\\
\tilde{u_i} = \frac{e f}{a b_i c} z_i, & 
{\displaystyle \tilde{v_i} = \frac{e f}{a c}z_i^{}} \qquad (1 \le i \le n). &
\nonumber 
\end{eqnarray}
Note that the series in the right hand side of \eqref{m1ST2} is invariant under this change 
of variables. Applying \eqref{n1ST2} to the ${}_{n+3} \phi_{n+2}$ series in the right hand side 
in \eqref{m1ST2} leads to the desired result \eqref{ltAnST1}. 
\end{proof}

 In \cite{KajiS}, we have already shown that \eqref{ltAnST1} can also be obtained by combining 
\eqref{m1ST1} as {\bf (A)} and \eqref{n1ST1} as {\bf(B)}. In this case, The change of variables is given by
\begin{eqnarray}\label{2covST1}
\tilde {a} = d/ c, &
\tilde{c} = a, & \tilde{e} = d e / B c. 
\\
\tilde{u_i} = \frac{d}{b_i} x_i, & 
{\displaystyle \tilde{v_i} = d x_i^{}} \qquad (1 \le i \le n). &
\nonumber 
\end{eqnarray}

\medskip 

\noindent
{\bf Rectangular version } 

\medskip

\begin{cor}
\begin{eqnarray}\label{itAnST1}
&&
\sum_{\gamma \in {\Bbb N}^n}
q^{|\gamma|} 
\frac{\Delta (x q^{\gamma})}{\Delta (x)}
\prod_{1 \le i, j \le n}
\frac{(q^{-m_j} x_i / x_j)_{\gamma_i}} {(q x_i / x_j)_{\gamma_i}}
\prod_{1 \le i \le n}
\frac{(c x_i )_{\gamma_i}}{(d x_i)_{\gamma_i}}
\\
&&  \quad 
\times
\frac{( a, b )_{|\gamma|}}
{(e, a b c q^{1-|M|} / d e)_{|\gamma|}}
\ = \
\frac{(e / b, d e /a c)_{|M|}}
{(e, d e / a b c)_{|M|}} 
\sum_{\delta \in {\Bbb N}^n}
q^{|\delta|} 
\frac{\Delta (x q^{\delta})}{\Delta (x)}
\nonumber\\
&& \quad \quad \quad 
\times
\frac{(b, d/c)_{|\delta|}}
{(d e/a c, q^{1-|M|} b / e)_{|\delta|}}
\prod_{1 \le i, j \le n}
\frac{(q^{-m_j} x_i / x_j)_{\delta_i}} {(q x_i / x_j)_{\delta_i}}
\prod_{1 \le i \le n}
\frac{((d/a) x_i )_{\delta_i}}{(d x_i )_{\delta_i}}
.
\nonumber
\end{eqnarray}
\end{cor}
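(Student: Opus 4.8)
The plan is to derive \eqref{itAnST1} from the triangular transformation \eqref{ltAnST1} by exactly the specialization-plus-polynomiality argument that produces \eqref{itAnWT1} from \eqref{ltAnWT1}. First I would put $b_j = q^{-m_j}$ for $1 \le j \le n$ in \eqref{ltAnST1}, so that $B = b_1 \cdots b_n = q^{-|M|}$ with $|M| = m_1 + \cdots + m_n$. Under this choice the factors $\prod_{i,j}(b_j x_i/x_j)_{\gamma_i}$ and $\prod_{i,j}(b_j x_i/x_j)_{\delta_i}$ turn into $\prod_{i,j}(q^{-m_j} x_i/x_j)_{\gamma_i}$ and $\prod_{i,j}(q^{-m_j} x_i/x_j)_{\delta_i}$; in particular the diagonal terms $(q^{-m_i})_{\gamma_i}$ and $(q^{-m_i})_{\delta_i}$ force $\gamma_i \le m_i$ and $\delta_i \le m_i$, so both multiple sums in \eqref{itAnST1} are finite.

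Next I would verify that this specialization of \eqref{ltAnST1} coincides term-by-term with \eqref{itAnST1} evaluated at $b = q^{-N}$. On the left-hand side this is immediate: the balancing parameter $aBcq^{1-N}/de$ becomes $acq^{1-N-|M|}/de$, which is precisely $abcq^{1-|M|}/de$ at $b = q^{-N}$, and $(q^{-N},a)_{|\gamma|} = (a,b)_{|\gamma|}$ there. For the outer prefactor on the right the role of $N$ and $|M|$ is interchanged, and this is reconciled by the elementary identity $\frac{(xq^A)_B}{(x)_B} = \frac{(xq^B)_A}{(x)_A}$ (valid for nonnegative integers $A,B$, together with its reciprocal form $\frac{(x)_B}{(xq^A)_B} = \frac{(x)_A}{(xq^B)_A}$). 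Applied with $x = e$ and $x = de/ac$ and $(A,B) = (|M|,N)$, it converts the specialized prefactor $\frac{(eq^{|M|},de/ac)_N}{(e,deq^{|M|}/ac)_N}$ into $\frac{(eq^{N},de/ac)_{|M|}}{(e,deq^{N}/ac)_{|M|}}$, i.e. the prefactor of \eqref{itAnST1} at $b = q^{-N}$; meanwhile the $\delta$-summand already matches, since $q^{1-N}B/e = q^{1-N-|M|}/e = q^{1-|M|}b/e$ there. Hence \eqref{itAnST1} holds whenever $b = q^{-N}$, $N = 0,1,2,\ldots$.

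Finally I would lift the restriction on $b$ by polynomiality, just as the text proposes after \eqref{itAnWT1}. Writing every terminating $q$-shifted factorial as a finite product via \eqref{ShiftFact} and clearing denominators, each side of \eqref{itAnST1} becomes, up to a fixed power of $b$, a polynomial in $b$ of bounded degree, the two sums being finite by the termination established in the first step. These two polynomials agree at the infinitely many distinct points $b = q^{-N}$ ($N \ge 0$; distinct because $0 < |q| < 1$), so they are identically equal, and \eqref{itAnST1} holds for arbitrary $b$.

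I expect the only genuinely delicate point to be the middle step: confirming that the specialization $b_j = q^{-m_j}$ of \eqref{ltAnST1} reproduces \eqref{itAnST1} at $b = q^{-N}$ \emph{exactly}, with the $N \leftrightarrow |M|$ interchange in the outer prefactor absorbed by the symmetry $\frac{(xq^A)_B}{(x)_B} = \frac{(xq^B)_A}{(x)_A}$. Once that equality at $b = q^{-N}$ is in hand, the polynomial continuation is routine and is precisely the argument the paper has already declared it will not repeat.
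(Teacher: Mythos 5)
Your proposal is correct and follows essentially the same route as the paper: the paper proves the analogous rectangular Watson corollary \eqref{itAnWT1} by setting $b_i = q^{-m_i}$ in the triangular version, checking the identity at $b = q^{-N}$, and clearing denominators to get a polynomial identity in $b^{-1}$ with infinitely many roots, and it explicitly states that all remaining corollaries, including \eqref{itAnST1} from \eqref{ltAnST1}, are proved by this same argument. Your only addition is making explicit the prefactor reconciliation via the symmetry $\frac{(xq^A)_B}{(x)_B} = \frac{(xq^B)_A}{(x)_A}$, which the paper absorbs into rewriting the terminating factorials as quotients of infinite products via \eqref{ShiftFact}; the two bookkeeping devices are equivalent.
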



\medskip

\noindent
{\bf The 2nd one }

\medskip

\begin{prop}
\begin{eqnarray}\label{ltAnST2}
&&
\sum_{\gamma \in {\Bbb N}^n}
q^{|\gamma|} 
\frac{\Delta (x q^{\gamma})}{\Delta (x)}
\prod_{1 \le i, j \le n}
\frac{(b_j x_i / x_j)_{\gamma_i}} {(q x_i / x_j)_{\gamma_i}}
\prod_{1 \le i \le n}
\frac{(c x_i)_{\gamma_i}}{(d x_i)_{\gamma_i}}
\\
&& \quad 
\times
\frac{(q^{-N}, a)_{|\gamma|}}
{(e, a B c q^{1-N} / d e)_{|\gamma|}}
\ = \
\frac{(d e /a c)_N}
{( d e / a B c)_N} 
\prod_{1 \le i \le n}
\frac{((d/ b_i) x_i)_N}
{(d x_i)_N}
\nonumber
\\
&& \quad \quad \quad 
\times \
\sum_{\delta \in {\Bbb N}^n}
q^{|\delta|} 
\frac{\Delta (z q^{\delta})}{\Delta (z)}
\frac{(q^{-N}, e/a)_{|\delta|}}
{(d e/a c, e)_{|\delta|}}
\prod_{1 \le i, j \le n}
\frac{(b_j z_i / z_j)_{\delta_i}} {(q z_i / z_j)_{\delta_i}}
\prod_{1 \le i \le n}
\frac{((e/c) z_i)_{\delta_i}}{((q^{1-N} B / d) z_i)_{\delta_i}}
,
\nonumber
\end{eqnarray}
where $z_i = b_i / B x_i$ for $ 1 \le i \le n$.
\end{prop}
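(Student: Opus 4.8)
The plan is to reproduce the two-step mechanism that yielded the first $A_n$ Sears transformation \eqref{ltAnST1}, but to route the intermediate series through the \emph{reversing} version \eqref{n1ST2} in place of \eqref{n1ST1}. Concretely, I would take \eqref{m1ST1} (the $m=1$, $y_1=1$ Sears transformation of type $A$) as step \textbf{(A)} and the $n=1$, $m\to n$ reversing Sears transformation \eqref{n1ST2} as step \textbf{(B)}. The left-hand side of \eqref{ltAnST2} is literally the left-hand side of \eqref{m1ST1}, so applying \eqref{m1ST1} rewrites it as the prefactor $\frac{(e/a,de/Bc)_N}{(e,de/aBc)_N}$ times a balanced $A_n$ ${}_{n+3}\phi_{n+2}$ series whose numerator parameters are $q^{-N},a,d/c,\{(d/b_i)x_i\}_n$ and whose denominator parameters are $q^{1-N}a/e,de/Bc,\{dx_i\}_n$.

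Next I would observe that this intermediate series is exactly the left-hand side of \eqref{n1ST2} under the change of variables \eqref{2covST1}, namely $\tilde a=d/c,\ \tilde c=a,\ \tilde e=de/Bc,\ \tilde u_i=(d/b_i)x_i,\ \tilde v_i=dx_i$ — the same substitution already employed (with \eqref{n1ST1}) in the remark following the proof of \eqref{ltAnST1}. Since $\prod_i\tilde u_i/\prod_i\tilde v_i=1/B$, the balancing parameter $\tilde a\tilde c\,(\prod_i\tilde u_i)\,q^{1-N}/(\tilde e\,\prod_i\tilde v_i)$ collapses to $aq^{1-N}/e$, and the numerator and denominator parameter sets of the two series agree after a transposition, which is legitimate because a ${}_{n+3}\phi_{n+2}$ series is symmetric separately in its numerator and in its denominator parameters. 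Hence the right-hand side of \eqref{m1ST1} is invariant under \eqref{2covST1} and coincides with the left-hand side of \eqref{n1ST2} evaluated at the tilde parameters.

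Applying \eqref{n1ST2} then converts the intermediate series into a sum over $\delta$. The key simplification is that \eqref{n1ST2} produces the Vandermonde $\Delta(u^{-1}q^\delta)/\Delta(u^{-1})$ at $\tilde u_i^{-1}=b_i/(dx_i)=(B/d)z_i$ with $z_i=b_i/(Bx_i)$; by homogeneity of the Vandermonde the factor $B/d$ cancels and this is precisely $\Delta(zq^\delta)/\Delta(z)$. A short computation then gives $\tilde v_j/\tilde u_i=b_ix_j/x_i=b_jz_i/z_j$, $q\tilde u_j/\tilde u_i=qz_i/z_j$, $\tilde e/\tilde u_i=(e/c)z_i$ and $q^{1-N}/\tilde u_i=(q^{1-N}B/d)z_i$, so the $\delta$-summand assembles into the right-hand side of \eqref{ltAnST2}, while the reciprocal-Pochhammer factors specialize to $\frac{(q^{-N},e/a)_{|\delta|}}{(de/ac,e)_{|\delta|}}$. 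Finally the prefactor of \eqref{n1ST2}, evaluated at \eqref{2covST1}, equals $\frac{(e,de/ac)_N}{(e/a,de/Bc)_N}\prod_i\frac{((d/b_i)x_i)_N}{(dx_i)_N}$, and multiplying it by the prefactor $\frac{(e/a,de/Bc)_N}{(e,de/aBc)_N}$ from \eqref{m1ST1} telescopes to exactly $\frac{(de/ac)_N}{(de/aBc)_N}\prod_i\frac{((d/b_i)x_i)_N}{(dx_i)_N}$, as stated.

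The only genuinely delicate part is the bookkeeping in this last step: checking that the four parameters $\tilde e\tilde V/\tilde a\tilde U$, $\tilde e\tilde V/\tilde c\tilde U$, $\tilde e\tilde V/\tilde a\tilde c\tilde U$ and $\tilde e$ specialize correctly, that the two prefactors telescope into the single product claimed, and that the shift $\tilde u_i^{-1}=(B/d)z_i$ (rather than $z_i$ itself) leaves every non-Vandermonde factor undisturbed. Each of these is a routine but error-prone cancellation; everything else — identifying \textbf{(A)} and \textbf{(B)} and matching the balanced intermediate series — is immediate once the change of variables \eqref{2covST1} is in hand.
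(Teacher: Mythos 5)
Your proposal is correct and takes essentially the same route as the paper, which likewise proves \eqref{ltAnST2} by using \eqref{m1ST1} as \textbf{(A)} and \eqref{n1ST2} as \textbf{(B)}. The only (immaterial) difference is that you invoke the change of variables \eqref{2covST1} with $\tilde{a}=d/c$, $\tilde{c}=a$, while the paper takes $\tilde{a}=a$, $\tilde{c}=d/c$ with the same $\tilde{e}$, $\tilde{u}_i$, $\tilde{v}_i$; since \eqref{n1ST2} is symmetric under exchanging its parameters $a$ and $c$, the two substitutions yield identical results, and your bookkeeping of the prefactor telescoping and of the rescaled Vandermonde $\tilde{u}_i^{-1}=(B/d)z_i$ matches the stated right-hand side.
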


\begin{proof}
We use \eqref{m1ST1}
as {\bf (A)} and \eqref{n1ST2} as {\bf(B)}.
In this case, the change of variables is given by 
\begin{eqnarray}
\tilde{a} = a, &
\tilde{c} = d/ c, & \tilde{e} = d e /B c, \\
\tilde{u_i} = \frac{d}{b_i} x_i , & \tilde{v_i} = d x_i \qquad (1 \le i \le n). & 
\nonumber 
\end{eqnarray}
For the rest, one can easily verify in a similar way as the proof of Proposition 4.4.
\end{proof}

Note that \eqref{ltAnST2} can also be obtained by combining \eqref{m1ST2} as {\bf (A)} and \eqref{n1ST1} as {\bf (B)}. 
In this case, the change of variables is given by
\begin{eqnarray}
\tilde{a} = e/a, &
\tilde{c} = f/ a, & \tilde{e} = e f / a B, \\
\tilde{u_i} = \frac{e f}{a b_i c} z_i , & \tilde{v_i} = {\displaystyle \frac{e f}{a c} z_i} 
 \qquad (1 \le i \le n). & 
\nonumber 
\end{eqnarray}

\medskip

\noindent
{\bf Rectangular version}

\medskip

\begin{cor}
\begin{eqnarray}\label{itAnST2}
&&
\sum_{\gamma \in {\Bbb N}^n}
q^{|\gamma|} 
\frac{\Delta (x q^{\gamma})}{\Delta (x)}
\prod_{1 \le i, j \le n}
\frac{(q^{- m_j} x_i / x_j)_{\gamma_i}} {(q x_i / x_j)_{\gamma_i}}
\prod_{1 \le i \le n}
\frac{(c x_i )_{\gamma_i}}{(d x_i )_{\gamma_i}}
\\
&& 
\quad 
\times \
\frac{(a, b)_{|\gamma|}}
{(e, a b c q^{1-|M|} / d e)_{|\gamma|}}
\ = \
\frac{(d e /a c)_{|M|}}
{( d e / a b c)_{|M|}}
\prod_{ 1 \le i \le n}
\frac{((d/b) x_i)_{m_i}}
{(d x_i)_{m_i}} 
\nonumber\\
&&
\quad \quad \quad
\times \
\sum_{\delta \in {\Bbb N}^n}
q^{|\delta|} 
\frac{\Delta (z q^{\delta})}{\Delta (z)}
\frac{(e/a, b)_{|\delta|}}
{(d e/a c, e)_{|\delta|}}
\prod_{1 \le i, j \le n}
\frac{(q^{-m_j} z_i / z_j)_{\delta_i}} {(q z_i / z_j)_{\delta_i}}
\prod_{1 \le i \le n}
\frac{((e/c) z_i )_{\delta_i}}{((b q^{1-|M|}/d) z_i)_{\delta_i}}
.
\nonumber
\end{eqnarray}
where $ z_i = q^{-m_i + |M| } x_i^{-1}$ for $ 1\le i \le n$.
\end{cor}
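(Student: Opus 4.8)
The plan is to deduce the rectangular identity \eqref{itAnST2} from the triangular Proposition \eqref{ltAnST2} by the same specialization-and-continuation argument that produces \eqref{itAnWT1} from \eqref{ltAnWT1}. The guiding observation is that \eqref{itAnST2} is exactly what \eqref{ltAnST2} becomes once the free Vandermonde parameters $b_i$ are frozen at $b_i=q^{-m_i}$ and the single terminating parameter $q^{-N}$ is promoted to the free parameter $b$. Concretely, I would first set $b_i=q^{-m_i}$ throughout \eqref{ltAnST2}, so that $B=b_1\cdots b_n=q^{-|M|}$ and $z_i=b_i/(Bx_i)=q^{-m_i+|M|}x_i^{-1}$, matching the $z_i$ of \eqref{itAnST2}. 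Under this specialization the factor $\prod_{i,j}(b_jx_i/x_j)_{\gamma_i}/(qx_i/x_j)_{\gamma_i}$ becomes the corresponding product with $q^{-m_j}$, and the diagonal ($i=j$) Pochhammers $(q^{-m_j})_{\gamma_j}$ force $0\le\gamma_j\le m_j$, so both the $\gamma$-sum and, symmetrically, the $\delta$-sum collapse to finite rectangular sums.

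Next I would carry out the parameter bookkeeping, identifying the free parameter $b$ of \eqref{itAnST2} with $q^{-N}$. On the left of \eqref{ltAnST2} the factor $(q^{-N},a)_{|\gamma|}/(e,\,aBcq^{1-N}/de)_{|\gamma|}$ becomes, after $B=q^{-|M|}$ and $b=q^{-N}$, the summand $(a,b)_{|\gamma|}/(e,\,abcq^{1-|M|}/de)_{|\gamma|}$ of \eqref{itAnST2}; likewise the $\delta$-sum carries over verbatim, since $(q^{-N},e/a)_{|\delta|}=(e/a,b)_{|\delta|}$ and $(q^{1-N}B/d)z_i=(bq^{1-|M|}/d)z_i$. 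The one genuinely delicate point is the scalar prefactor, where the subscript $N$ of \eqref{ltAnST2} must be converted into the subscripts $|M|$ and $m_i$ of \eqref{itAnST2}. Here I would invoke the Pochhammer reversal identity
\begin{equation*}
\frac{(aq^{m})_{N}}{(a)_{N}}=\frac{(aq^{N})_{m}}{(a)_{m}},
\end{equation*}
which follows from $(a)_{N+m}=(a)_{N}(aq^{N})_{m}=(a)_{m}(aq^{m})_{N}$. Applying it with $a=dx_i$, $m=m_i$ gives $\prod_i(dq^{m_i}x_i)_N/(dx_i)_N=\prod_i(dq^{N}x_i)_{m_i}/(dx_i)_{m_i}=\prod_i((d/b)x_i)_{m_i}/(dx_i)_{m_i}$ once $q^{N}=b^{-1}$, and applying it with $a=de/ac$, $m=|M|$ turns $(de/ac)_N/(deq^{|M|}/ac)_N$ into $(de/ac)_{|M|}/(de/abc)_{|M|}$; these are precisely the prefactors in \eqref{itAnST2}.

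With all pieces matched, the specialized \eqref{ltAnST2} is exactly \eqref{itAnST2} evaluated at $b=q^{-N}$, so \eqref{itAnST2} holds for $b=q^{-N}$ for every nonnegative integer $N$. To conclude I would run the standard continuation argument: after freezing $b_i=q^{-m_i}$ both sides are finite sums and hence, upon writing the prefactor as a quotient of infinite products via \eqref{ShiftFact} and clearing denominators, rational (indeed Laurent-polynomial) expressions in $b$. An identity of such expressions holding at the infinitely many values $b=q^{-N}$ must hold identically, which establishes \eqref{itAnST2} for arbitrary $b$.

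I expect the only real obstacle to be the prefactor bookkeeping in the second step: keeping the reversal identity consistent so that $q^{N}$ plays the role of $b^{-1}$ uniformly across the $(de/ac)$-prefactor and the $n$ factors $((d/b)x_i)_{m_i}$, and confirming that the diagonal $q^{-m_j}$ factors really do truncate both sums rectangularly so that the Laurent-polynomial continuation is legitimate. Everything else is the routine specialization already used to obtain \eqref{itAnWT1}.
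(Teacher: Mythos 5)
Your proposal is correct and coincides with the paper's own method: the author proves Corollary 4.1 by exactly this specialization $b_i=q^{-m_i}$, identification $q^{-N}\to b$ via the Pochhammer reversal $(a)_{N+m}=(a)_N(aq^N)_m=(a)_m(aq^m)_N$, and polynomial continuation in $b^{-1}$, and then states that all remaining corollaries, including \eqref{itAnST2} from \eqref{ltAnST2}, follow by the same argument. Your bookkeeping (in particular $B=q^{-|M|}$, $z_i=q^{-m_i+|M|}x_i^{-1}$, and the conversion of the $N$-indexed prefactors into the $|M|$- and $m_i$-indexed ones) checks out.
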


\begin{rem}
In the case when $n=1$ and $x_1 = 1$, all of $A_n$ terminating balanced ${}_4 \phi_3$ transformation formulas 
\eqref{ltAnST1}, \eqref{itAnST1}, \eqref{ltAnST2} and \eqref{itAnST2}
reduce to  the Sears transformation \eqref{SearsT1}.
Especially, \eqref{ltAnST1} has already appeared in our previous work \cite{KajiS} and
\eqref{itAnST2} is originally due to Milne and Lilly (Theorem 6.5 in \cite{LM1}).
Note also that \eqref{ltAnST1} can be proved by duplicating \eqref{ltAnST2}
\end{rem}

\begin{rem}
Two  $A_n$ Watson transformations \eqref{ltAnWT1} and \eqref{ltAnWT2} transpose to one another by 
$A_n$ Sears transformation formula \eqref{ltAnST2}. 
Similarly,
two  $A_n$ Watson transformations \eqref{itAnWT1} and \eqref{itAnWT2} transpose to one another by 
$A_n$ Sears transformation formula \eqref{itAnST2}. 

\end{rem} 


\subsection{Another type of $A_n$ Sears transformation}

Here, we give a yet another type of  $A_n$ Sears transformation formula whose form of the series in both sides 
are the same as that in the right hand side of $A_n$ Watson transformation formulas in Section 4.2. 
We use the $m=1$ case \eqref{m1IDT1} of the inversion of the duality transformation 
\eqref{IDT1} and a certain special case \eqref{rn1DT1} of the duality transformation formula \eqref{DT1}.
Our road map is as follows:
 
\vspace{5mm}
\begin{center}
\setlength{\unitlength}{1mm}
\begin{picture}(100, 30)
\put(0,20){\makebox(15,10)[r]{${}_4 \phi_3$ series in $A_n$}}
\put(80,20){\makebox(15,10)[r]{${}_4 \phi_3$ series in $A_n$}}
\put(20, 25){\vector(1,0){45}}
\put(23, 27){$A_n$ Sears trans.}
\put(0,0){\makebox(15,10)[r]{${}_{2n+6} W_{2n+5}$ series}}
\put(80,0){\makebox(15,10)[r]{${}_{2n+6} W_{2n+5}$ series}}
\put(20, 5){\vector(1,0){45}}
\put(30, 7){transposition}
\put(5, 20){\vector(0, -1){10}}
\put(75, 10){\vector(0, 1){10}}
\put(8, 15){\eqref{m1IDT1}}
\put(78, 15){\eqref{rn1DT1} }
\end{picture}
\end{center}

\begin{prop}
\begin{eqnarray}\label{ltAnST3}
&&
\sum_{\gamma \in {\Bbb N}^{n}}
q^{|\gamma|} 
\frac{\Delta({x} q^{\gamma})}{\Delta({x})}
\prod_{1 \le i, j \le n}
\frac{(a_j x_i / x_j)_{\gamma_i}} {(q x_i / x_j)_{\gamma_i}}
\prod_{1 \le i \le n}
\frac{(b x_i, c x_i )_{\gamma_i}} 
{(e x_i,
( A b c q^{1-N}/ d e) x_i)_{\gamma_i}}
\\
&& \quad \times
\frac{(q^{-N})_{|\gamma|}}
{(d)_{|\gamma|}}
\ = \
\prod_{ 1 \le i \le n}
\frac
{((d e  / b c) z_i, (e/a_i) x_i )_N}
{((d e/ a_i b c) z_i, e x_i )_N}
\sum_{\delta \in {\Bbb N}^{n}}
q^{|\delta|} 
\frac{\Delta({z} q^{\delta})}{\Delta({z})}
\nonumber 
\\
&& \quad \quad \quad 
\times
\frac
{(q^{-N})_{|\delta|}}
{(d)_{|\delta|}}
\prod_{1 \le i, j \le n}
\frac{(a_j z_i / z_j)_{\delta_i}} {(q z_i / z_j)_{\delta_i}}
\prod_{1 \le i \le n}
\frac{((d/b) z_i, (d/c) z_i )_{\delta_i}} 
{((d e / b c ) z_i, (A q^{1-N}/ e) z_i)_{\delta_i}}
,
\nonumber
\end{eqnarray}
where $z_i = a_i / A x_i$ for $i= 1, 2, \cdots , n$.
\end{prop}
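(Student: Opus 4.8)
The plan is to realize the diagram above by composing the inverse duality transformation \eqref{m1IDT1} with the duality transformation \eqref{rn1DT1}, using the permutation symmetry of a one-dimensional very-well-poised series as the hinge. The left-hand side of \eqref{ltAnST3} coincides verbatim with the left-hand side of \eqref{m1IDT1} (the numerator parameters being $a_1,\dots,a_n$ and the two distinguished denominator parameters $e$ and $Abcq^{1-N}/de$). Hence \eqref{m1IDT1} applies directly and rewrites it as an explicit prefactor $P_L=\dfrac{(de/Ac,\,de/Ab)_N}{(de^2/Abc,\,d)_N}\prod_{i=1}^n\dfrac{((dea_i/Abc)x_i^{-1})_N}{((de/Abc)x_i^{-1})_N}$ times a single ${}_{2n+6}W_{2n+5}$ series with base point $s=de^2q^{-1}/Abc$, upper parameters $e/b,\ e/c,\ q^{-N}$ together with the two $A_n$-blocks $\{(e/a_i)x_i\}_{i=1}^n$ and $\{(de/Abc)x_i^{-1}\}_{i=1}^n$, and argument $dq^N$.

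Since a one-dimensional ${}_{r+3}W_{r+2}$ is symmetric in all of its upper parameters, I am free to feed this series into \eqref{rn1DT1} by designating which two parameters play the role of $b,d$ and which block plays the role of $\{v_i\}$ (the block that appears inverted on the right of \eqref{rn1DT1}); this choice is the transposition marked on the bottom arrow of the diagram. I would take $\tilde a=de^2q^{-1}/Abc$, $\tilde b=e/b$, $\tilde d=e/c$, $\tilde u_i=(de/Abc)x_i^{-1}$ and $\tilde v_i=(e/a_i)x_i$, the last choice being forced by the requirement $\tilde v_i^{-1}\propto z_i=a_i/Ax_i$ so that $\Delta(\tilde v^{-1}q^\delta)/\Delta(\tilde v^{-1})$ becomes the Vandermonde factor $\Delta(zq^\delta)/\Delta(z)$ of \eqref{ltAnST3}. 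A short computation gives $\tilde U\tilde V=d^ne^{2n}/(A^{n+1}b^nc^n)$, hence $\tilde a^{n+1}q^{n+1}/(\tilde b\tilde d\tilde U\tilde V)=d$ and the very-well-poised-balancing argument $\tilde a^{n+1}q^{N+n+1}/(\tilde b\tilde d\tilde U\tilde V)$ prescribed by \eqref{rn1DT1} is exactly $dq^N$. Thus the series coming from \eqref{m1IDT1} is precisely the left-hand side of \eqref{rn1DT1}, which I may then expand as a prefactor $P_R$ times an $A_n$ sum over $\delta\in{\Bbb N}^n$.

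What remains is to confirm that the two halves compose to \eqref{ltAnST3}, and this bookkeeping is the step I expect to demand the most care. For the summand, the dictionary $\tilde v_j/\tilde v_i=z_i/z_j$, $\tilde a q/\tilde u_j\tilde v_i=a_jz_i/z_j$, $\tilde a q/\tilde b\tilde v_i=(d/c)z_i$, $\tilde a q/\tilde d\tilde v_i=(d/b)z_i$, $\tilde a q/\tilde v_i=(de/bc)z_i$ and $q^{1-N}/\tilde v_i=(Aq^{1-N}/e)z_i$ matches the $\delta$-sum of \eqref{rn1DT1} to that of \eqref{ltAnST3} term by term, while $\tilde a^{n+1}q^{n+1}/(\tilde b\tilde d\tilde U\tilde V)=d$ supplies the factor $(d)_{|\delta|}$. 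For the prefactors one computes from \eqref{rn1DT1} that $P_R=\dfrac{(d,\,de^2/Abc)_N}{(de/Ac,\,de/Ab)_N}\prod_{i=1}^n\dfrac{((e/a_i)x_i)_N}{(ex_i)_N}$, so that the common factors $(de/Ac,de/Ab)_N$ and $(de^2/Abc,d)_N$ cancel between $P_L$ and $P_R$, leaving exactly $\prod_{i=1}^n\dfrac{((de/bc)z_i,\,(e/a_i)x_i)_N}{((de/a_ibc)z_i,\,ex_i)_N}$ after using $(dea_i/Abc)x_i^{-1}=(de/bc)z_i$ and $(de/Abc)x_i^{-1}=(de/a_ibc)z_i$. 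This is the prefactor on the right of \eqref{ltAnST3}, and the proof is complete.
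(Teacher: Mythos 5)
Your proposal is correct and coincides with the paper's own proof: both rewrite the left-hand side by \eqref{m1IDT1} and then apply \eqref{rn1DT1} to the resulting ${}_{2n+6}W_{2n+5}$ series under the transposition of upper parameters recorded in \eqref{covST3}, with exactly the prefactor cancellation you carry out. The one place you deviate is deliberate and sound: you take $\tilde{u}_i=(de/Abc)\,x_i^{-1}$ where the paper prints $\tilde{u}_i=(deq^{-1}/Abc)\,x_i^{-1}$, and your choice is the right one, since it is what the $W^{m,n+1}$ series in \eqref{IDT1} actually reduces to at $m=1$ and is the only choice compatible with the very-well-poised-balancing condition \eqref{vwpb} and the argument $dq^{N}$ (the extra $q^{-1}$ in the displayed \eqref{m1IDT1}, \eqref{m1n1IDT1} and \eqref{covST3} is a typo, which your balancing computation $\tilde{a}^{n+1}q^{n+1}/(\tilde{b}\tilde{d}\tilde{U}\tilde{V})=d$ silently corrects).
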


\begin{proof}
We use \eqref{m1IDT1} and \eqref{rn1DT1}. Note that both of the series in the right hand side of \eqref{m1IDT1} 
and in the left hand side of \eqref{rn1DT1} are very-well-poised-balanced ${}_{2n+6} W_{2n+5}$ 
series.
In this case, we consider the following change of variables:
\begin{eqnarray} \label{covST3}
\tilde{a} = \frac{d e^2 q^{-1}}{A b c},  & \tilde{b} = e/b, &
\tilde{d} = e/c,
\\
\tilde{u}_i = (d e q^{-1} /Ab c)  x_i^{-1}, & \tilde{v}_i = (e/ a_i) x_i  \quad \quad 
(i= 1, \cdots , n), &
\nonumber
\end{eqnarray}
which is a transposition of the variables in 
${}_{2n + 6} W_{2n+5} $ series in \eqref{m1IDT1}.
Since the series in the right hand side of \eqref{m1IDT1} is invariant under this change of 
variables. This invariance implies the desired result \eqref{ltAnST3} by applying 
\eqref{rn1DT1} according to the change \eqref{covST3}.
\end{proof}

\medskip
\noindent
{\bf Rectangular version}

\begin{cor}
\begin{eqnarray}\label{itAnST3}
&&
\sum_{\gamma \in {\Bbb N}^{n}}
q^{|\gamma|} 
\frac{\Delta({x} q^{\gamma})}{\Delta({x})}
\prod_{1 \le i, j \le n}
\frac{(q^{-m_j} x_i / x_j)_{\gamma_i}} {(q x_i / x_j)_{\gamma_i}}
\prod_{1 \le i \le n}
\frac{(b x_i, c x_i )_{\gamma_i}} 
{(e x_i,
 (a b c q^{1-|M|}/ d e) x_i)_{\gamma_i}}
\\
&&
\quad 
\times
\frac{(a)_{|\gamma|}}
{(d)_{|\gamma|}}
\ = \
\prod_{ 1 \le i \le n}
\frac
{((d e  / b c) z_i, (e/a) x_i )_{m_i}}
{((d e/ a b c) z_i, e x_i )_{m_i}}
\
\sum_{\delta \in {\Bbb N}^{n}}
q^{|\delta|} 
\frac{\Delta({z} q^{\delta})}{\Delta({z})}
\nonumber \\
&& \quad \quad \quad 
\times
\frac
{(a)_{|\delta|}}
{(d)_{|\delta|}}
\prod_{1 \le i, j \le n}
\frac{(q^{-m_j} z_i / z_j)_{\delta_i}} {(q z_i / z_j)_{\delta_i}}
\prod_{1 \le i \le n}
\frac{((d/b) z_i, (d/c) z_i )_{\delta_i}} 
{((d e/ b c) z_i, (a q^{1-|M|}/ e) z_i)_{\delta_i}},
\nonumber
\end{eqnarray}
where $z_i = q^{m_i - |M|} x_i^{-1}$ for $i= 1, 2, \cdots , n$.
\end{cor}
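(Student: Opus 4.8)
The plan is to obtain \eqref{itAnST3} from the triangular transformation \eqref{ltAnST3} by the same specialization-and-interpolation argument that produces \eqref{itAnWT1} from \eqref{ltAnWT1}. First I would specialize the ``Vandermonde'' parameters in \eqref{ltAnST3} to $a_i = q^{-m_i}$ for $1 \le i \le n$, with $m_1, \dots, m_n$ nonnegative integers and $|M| = m_1 + \cdots + m_n$. Under this substitution $A = a_1 \cdots a_n = q^{-|M|}$ and the auxiliary variables become $z_i = a_i/(A x_i) = q^{|M|-m_i} x_i^{-1}$, matching those in \eqref{itAnST3}. The products $\prod_{i,j}(a_j x_i/x_j)_{\gamma_i}$ on the left and $\prod_{i,j}(a_j z_i/z_j)_{\delta_i}$ on the right then turn into the rectangular factors $\prod_{i,j}(q^{-m_j} x_i/x_j)_{\gamma_i}$ and $\prod_{i,j}(q^{-m_j} z_i/z_j)_{\delta_i}$, so that both the $\gamma$-sum and the $\delta$-sum become genuinely finite: the diagonal factor $(q^{-m_i})_{\gamma_i}$ truncates $\gamma_i$ at $m_i$, and likewise each $\delta_i \le m_i$.

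Second, I would identify the termination parameter by setting $q^{-N} = a$ and rewrite the prefactor of \eqref{ltAnST3}, whose factors all carry the uniform subscript $N$, into the per-index $(\cdot)_{m_i}$ form required by \eqref{itAnST3}. Using \eqref{ShiftFact} in the form $(\alpha)_N = (\alpha)_\infty/(\alpha q^N)_\infty$ together with $q^N = a^{-1}$, each ratio collapses as
\begin{equation*}
\frac{((e/a_i) x_i)_N}{(e x_i)_N} = \frac{(e q^{m_i} x_i)_N}{(e x_i)_N} \longrightarrow \frac{((e/a) x_i)_{m_i}}{(e x_i)_{m_i}}, \qquad \frac{((de/bc) z_i)_N}{((de/a_i bc) z_i)_N} \longrightarrow \frac{((de/bc) z_i)_{m_i}}{((de/abc) z_i)_{m_i}},
\end{equation*}
reproducing exactly the prefactor $\prod_i \frac{((de/bc) z_i, (e/a) x_i)_{m_i}}{((de/abc) z_i, e x_i)_{m_i}}$ of \eqref{itAnST3}. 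Simultaneously the $N$-dependent Pochhammers inside the two sums specialize correctly: $(q^{-N})_{|\gamma|} \to (a)_{|\gamma|}$ and $((A bc q^{1-N}/de) x_i)_{\gamma_i} \to ((abc q^{1-|M|}/de) x_i)_{\gamma_i}$ on the left, while on the right $(q^{-N})_{|\delta|} \to (a)_{|\delta|}$ and $((A q^{1-N}/e) z_i)_{\delta_i} \to ((a q^{1-|M|}/e) z_i)_{\delta_i}$. After this rewriting, the specialized form of \eqref{ltAnST3} is precisely \eqref{itAnST3} evaluated at $a = q^{-N}$.

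Finally I would promote this to arbitrary $a$ by the polynomial-interpolation step used for \eqref{itAnWT1}: since both sums are finite and the $z_i$ do not depend on $a$, each side of \eqref{itAnST3} is a rational function of $a$, and the previous steps show the two sides agree at $a = q^{-N}$ for every nonnegative integer $N$, an infinite set of distinct points because $0 < |q| < 1$. Clearing denominators yields a polynomial identity in $a^{-1}$ with infinitely many roots, hence an identity for arbitrary $a$. I expect the only delicate point to be the bookkeeping of the second step, namely verifying that the uniform-subscript-$N$ prefactor of the triangular identity reorganizes, via \eqref{ShiftFact}, into the product of per-index $(\cdot)_{m_i}$ factors while every remaining $N$-dependent factor inside both multiple sums specializes to the stated one at $q^{-N} = a$; the specialization and the final interpolation are routine once the finiteness of both sums is recorded.
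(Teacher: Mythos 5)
Your proposal is correct and is exactly the paper's own method: the paper proves every rectangular corollary of Section~4 by the same specialization-and-interpolation argument it writes out once for Corollary~4.1 (set the Vandermonde parameters to $q^{-m_i}$, observe the identity holds at the termination parameter $q^{-N}$ for every nonnegative integer $N$, clear denominators, and invoke that a polynomial in $a^{-1}$ with infinitely many roots vanishes), and your \eqref{ShiftFact} bookkeeping converting the uniform-subscript-$N$ prefactor of \eqref{ltAnST3} into the per-index $(\cdot)_{m_i}$ factors checks out. The one point to flag is that your substitution yields $z_i = a_i/(A x_i) = q^{|M|-m_i}x_i^{-1}$, which does not literally match the exponent $q^{m_i-|M|}$ printed in \eqref{itAnST3}; comparison with \eqref{itAnST2} (where the paper writes $z_i = q^{-m_i+|M|}x_i^{-1}$) indicates the printed exponent in \eqref{itAnST3} is a sign typo, so your derived value is the correct one and your claim that it ``matches'' the statement should really be recorded as a correction to the statement rather than an agreement with it.
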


\begin{rem}
\eqref{itAnST3} has originally appeared as Theorem 6.8 in  Milne-Lilly \cite{LM1}.
Though they referred as $C_r$ Sears transformation formula there, the sums 
in both side of \eqref{itAnST3} are $A_n$ ${}_4 \phi_3$ series.
For their point of view, it may be precise to refer it as { \it $A_n$ Sears transformation formula
arising from $C_n$ Bailey transform}.

In the case when $n=1$ and $x_1 = 1$, \eqref{ltAnST3} and \eqref{itAnST3} reduce to the Sears transformation
\eqref{SearsT1}.
We also note that \eqref{ltAnST3} can be obtained from $A_n$ Bailey transformation 
\eqref{Masatoshi-san} in the following
way: First we replace $d \to aq/ d$ and $f \to aq/f$ in \eqref{Masatoshi-san}. 
Then let the parameter $a$ tends to infinity in the resulting equation. 
Finally rearrange the parameters appropriately. 

\end{rem}

\subsection{Nonterminating ${}_8 W_7$ transformations}

Here we show a $A_n$ nonterminating ${}_8 W_7$  
transformation formula. Our tool to produce it is $m=1$ case \eqref{m1NT87} of 
of the nonterminating ${}_8 W_7 $ transformation formula \eqref{NT87} 
in Section 3.3. 
One can see the way to prove the identity as

\vspace{5mm}
\begin{center}
\setlength{\unitlength}{1mm}
\begin{picture}(100, 30)
\put(0,20){\makebox(15,10)[r]{${}_8 W_7$ series in $A_n$}}
\put(80,20){\makebox(15,10)[r]{${}_8 W_7$ series in $A_n$}}
\put(20, 25){\vector(1,0){45}}
\put(23, 27){$A_n$ nonterminating ${}_8 W_7$  trans.}
\put(0,0){\makebox(15,10)[r]{${}_{2n+6} W_{2n+5}$ series}}
\put(80,0){\makebox(15,10)[r]{${}_{2n+6} W_{2n+5}$ series}}
\put(20, 5){\vector(1,0){45}}
\put(30, 7){transposition}
\put(5, 20){\vector(0, -1){10}}
\put(75, 10){\vector(0, 1){10}}
\put(8, 15){\eqref{m1NT87}}
\put(78, 15){\eqref{m1NT87}}
\end{picture}
\end{center}

\begin{prop}
\begin{eqnarray}\label{AnNT87T1}
&&
\sum_{\gamma \in {\Bbb N}^{n}}
x_1^{- \gamma_1}  \cdots x_n^{- \gamma_n} 
\left( \frac{a^2 q^2}{b c d E f}\right)^{|\gamma|}
q^{ e_2 ( \gamma )} \  
\frac{\Delta({x} q^{\gamma})}{\Delta({x})}
\prod_{1 \le i \le n}
\frac{1-  a q^{|\gamma| + \gamma_i} x_i}
{1-  a x_i }
\\
&&
\quad 
\times
\prod_{1 \le i, j \le n}
\frac{(e_j x_i / x_j)_{\gamma_i}} {(q x_i / x_j)_{\gamma_i}}
\prod_{1 \le i\le n}
\frac{(b x_i, c x_i, d x_i)_{\gamma_i}} 
{( ( a q / f ) x_i)_{\gamma_i}}
\nonumber\\
&&
\quad \quad  
\times
\frac{( f)_{|\gamma|}}
{(  a q / b, a q / c, a q / d)_{|\gamma|}}
\prod_{1 \le i \le n}
\frac{(a x_i)_{|\gamma|}}
{( ( a q /e_i ) x_i)_{|\gamma|}}
\nonumber\\
&& 
=
\prod_{ 1 \le i \le n}
\frac{
( a q x_i, (a q / e_i f) x_i, 
(\lambda q / e_i ) z_i,
(\lambda q / f ) z_i)_\infty}
{( ( a q / e_i) x_i, (a q / f) x_i, 
(\lambda q / e_i f ) z_i,
\lambda q  z_i)_\infty}
\nonumber\\
&&
\quad 
\times
\sum_{\delta \in {\Bbb N}^{n}}
z_1^{- \delta_1} \cdots z_n^{- \delta_n}
\left( \frac{ a q }{E f}\right)^{|\delta|}
q^{ e_2 ( \delta )} \ 
\frac{\Delta({z} q^{\delta})}{\Delta({z})}
\prod_{1 \le i \le n}
\frac{1-  \lambda q^{|\delta| + \delta_i} z_i}
{1-  \lambda z_i}
\nonumber\\
&&
\quad \quad 
\times
\prod_{1 \le i, j \le n}
\frac{(e_j z_i / z_j)_{\delta_i}} {(q z_i / z_j)_{\delta_i}}
\prod_{1 \le i\le n}
\frac{( ( a q / c d ) z_i, ( a q / b d ) z_i, 
(a q / b c)  z_i)_{\delta_i}} 
{( ( \lambda q / f ) z_i)_{\delta_i}}
\nonumber\\
&&
\quad \quad \quad 
\times
\frac
{( f)_{|\delta|}}
{( a q / b, a q / c,  a q / d )_{|\delta|}}
\prod_{1 \le i \le n}
\frac{(\lambda z_i)_{|\delta|}}
{( ( \lambda q /e_i ) z_i)_{|\delta|}}
\nonumber
\end{eqnarray}
where $ \lambda = a^{2} q^{} / b  c d $
and $z_i = \displaystyle{\frac{e_i}{E} x_i^{-1}}$.
\end{prop}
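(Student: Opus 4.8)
The plan is to realize \eqref{AnNT87T1} along the route indicated in the diagram above, using the transformation \eqref{m1NT87} (whose right-hand side is an ordinary, one-dimensional very-well-poised series) on \emph{both} sides and meeting in the middle at a single ${}_{2n+6}W_{2n+5}$ series. Concretely, I would apply \eqref{m1NT87} to the $A_n$ ${}_8 W_7$ series on the left of \eqref{AnNT87T1} and, separately, to the $A_n$ ${}_8 W_7$ series on its right, and then show that the two ordinary series so obtained are literally the same series; after that the identity reduces to a comparison of the accompanying infinite products.

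To set up the left-hand side, I match the parameters of \eqref{m1NT87} to those here by $b_i\mapsto e_i$ (so $B\mapsto E$), $\{c,d,e\}\mapsto\{b,c,d\}$ and $f\mapsto f$. Then \eqref{m1NT87} rewrites the left side as a prefactor times an ordinary series $S_1$, very-well-poised with base parameter $\mu=a^3q^2/(E\,bcd\,f^2)$ and argument $f$, whose upper parameters fall into the three blocks
\[ \{(aq/e_if)x_i\}_{1\le i\le n},\quad \{aq/bf,\ aq/cf,\ aq/df\},\quad \{(\mu f/a)x_i^{-1}\}_{1\le i\le n}. \]
The right-hand side of \eqref{AnNT87T1} is itself an $A_n$ ${}_8 W_7$ series, now with base parameter $\lambda=a^2q/(bcd)$, vector parameter $e_i$ carried on the variables $z_i=(e_i/E)x_i^{-1}$, three numerator parameters $\lambda b/a=aq/cd$, $\lambda c/a=aq/bd$, $\lambda d/a=aq/bc$, and the same $f$; applying \eqref{m1NT87} to it produces an ordinary series $S_2$ whose base parameter is again $\lambda^3q^2/\bigl(E\,(\lambda b/a)(\lambda c/a)(\lambda d/a)\,f^2\bigr)=\mu$ and whose argument is again $f$.

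The crux is the identity $S_1=S_2$. Since a ${}_{r+3}W_{r+2}$ series is symmetric in its $r$ upper parameters, it is enough to match the two multisets of upper parameters. The middle blocks agree because $\lambda q/\bigl((\lambda b/a)f\bigr)=aq/bf$ and likewise for $c,d$. For the two vector blocks, a short computation with $z_i=(e_i/E)x_i^{-1}$, using $\lambda q/(Ef)=\mu f/a$ and $\mu fE/\lambda=aq/f$, gives $\{(\lambda q/e_if)z_i\}=\{(\mu f/a)x_i^{-1}\}$ and $\{(\mu f/\lambda)z_i^{-1}\}=\{(aq/e_if)x_i\}$. Thus the two $n$-element vector blocks of $S_2$ are precisely those of $S_1$ interchanged, so the required change of variables is simply the transposition swapping $\{(aq/e_if)x_i\}$ with $\{(\mu f/a)x_i^{-1}\}$; under it $S_1$ and $S_2$ coincide.

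It then remains only to verify that the two prefactors delivered by the two uses of \eqref{m1NT87} combine into the infinite product written on the right of \eqref{AnNT87T1}; this is a direct rearrangement of $q$-shifted factorials by means of \eqref{ShiftFact} together with the relations above, and the convergence of the nonterminating multiple sums is inherited from that of \eqref{m1NT87} (cf. Remark 3.7). I expect the only real difficulty to be bookkeeping: keeping the two vector blocks straight across the transposition and checking that the many prefactor $q$-factorials telescope correctly, rather than anything conceptual.
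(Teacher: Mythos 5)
Your proposal is correct and is essentially the paper's own proof: the paper also iterates \eqref{m1NT87} twice, interchanging the two $n$-element blocks $\{(aq/e_if)x_i\}$ and $\{(\mu f/a)x_i^{-1}\}$ in the intermediate one-dimensional ${}_{2n+6}W_{2n+5}$ series, which is the same transposition you identify (your ``meet in the middle'' phrasing is just the forward--backward reading of the same iteration). Your parameter computations ($\mu'=\mu$, $(\lambda q/e_if)z_i=(\mu f/a)x_i^{-1}$, $(\mu f/\lambda)z_i^{-1}=(aq/e_if)x_i$) and the prefactor telescoping check out.
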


\begin{proof}
We iterate \eqref{m1NT87}
\begin{eqnarray*}
&&
\sum_{\gamma \in {\Bbb N}^{n}}
x_i^{-\gamma_i} \cdots x_n^{-\gamma_n}
 \left(
\frac{\mu f}{a}
\right)^{|\gamma|}
q^{ e_2 ( \gamma )} \ 
\frac{\Delta({x}q^{\gamma})}
{\Delta({x})}
\prod_{1 \le i \le n}
\frac{1-  a q^{|\gamma| + \gamma_i} x_i }
{1-  a x_i}
\\
&& \quad \quad
\times
\prod_{1 \le i, j \le n}
\frac{(b_j x_i / x_j)_{\gamma_i}} {(q x_i / x_j)_{\gamma_i}}
\prod_{1 \le i \le n}
\frac{(c x_i,  d x_i, e x_i)_{\gamma_i}}
{(( a q / f ) x_i)_{\gamma_i}}
\nonumber\\
&& \quad \quad \quad \quad
\times
\frac{( f)_{|\gamma|}}
{( a q / c, a q / d, a q / e)_{|\gamma|}}
\prod_{1 \le i \le n}
\frac{(a x_i)_{|\gamma|}}
{( ( a q /b_i ) x_i)_{|\gamma|}}
\nonumber\\
&=&
\frac
{(\mu c f / a, \mu d f / a, \mu e f / a, f)_\infty}
{(a q / c, a q / d, a q / e, \mu q)_\infty}
\prod_{ 1 \le i \le n}
\frac
{(a q x_i,  ( \mu b_i f / a ) x_i^{-1} )_\infty}
{((a q / b_i) x_i, (\mu f / a) x_i^{-1} )_\infty}
\nonumber\\
&&
\quad \quad
\times
{}_{2 n + 4} W_{2 n + 3}
\left[
\mu; \{(a q /b_i^{} f ) x_i \}_{n}
a q / c f,a q / d f,a q / e f,
\{(\mu f / a )  x_i^{-1} \}_{n}
; q; f
\right],\nonumber \\
&& \quad \quad \quad \quad \quad \quad \quad 
(\mu = a^{3} q^{2} / B c d e f^{2}),
\end{eqnarray*}
twice. 
On the way to obtain \eqref{AnNT87T1}, we  interchange 
$ (aq / b_i f) x_i$ and $(\mu f / a) x_i^{-1}$ for 
all $i=1, \cdots , n$ simultaneously in the 
${}_{2n+6} W_{2n+5}$ series.
\end{proof}

\begin{rem}
In the case when $n=1$ and $x_1 = 1$, \eqref{AnNT87T1} reduces to the following
nonterminating ${}_8 W_7$
transformation:
\begin{eqnarray}\label{NonTermT87}
&& \quad \quad \quad
{}_{8} W_{7} 
\left[
		a;b, c, d, e, f
		; q; \frac{a^2 q^2}{b c d e f}
\right]\\
&=&
\frac{(a q, a q /e f, \lambda q /e, \lambda q /f)_\infty}
{(a q/ e, a q /f, \lambda q, \lambda q /e f)_\infty }
{}_{8} W_{7} 
\left[
		\lambda; \lambda b /a,
		\lambda c/ a, \lambda d / a , e, f 
		; q; \frac{a q}{e f}
\right],\nonumber
\end{eqnarray}
where $\lambda = a^2 q^{} /b c d$. 
\eqref{AnNT87T1} can also be obtained by taking the limit $N \to \infty$ 
in $A_n$ Bailey transformation formula \eqref{Masatoshi-san}.
\end{rem}


\subsection{Terminating ${}_8 W_7$ transformations}

Here we present $A_n$ nonterminating ${}_8 W_7$  
transformation formulas. We give a proof  by using $m=1$ case \eqref{m1T87T1} of 
of the nonterminating ${}_8 W_7 $ transformation formula \eqref{T87T1} 
in Section 3.3.  The proof is in the same manner as in that of \eqref{AnNT87T1}. 
One can see the way to give the identity as

\vspace{5mm}
\begin{center}
\setlength{\unitlength}{1mm}
\begin{picture}(100, 30)
\put(0,20){\makebox(15,10)[r]{${}_8 W_7$ series in $A_n$}}
\put(80,20){\makebox(15,10)[r]{${}_8 W_7$ series in $A_n$}}
\put(20, 25){\vector(1,0){45}}
\put(23, 27){$A_n$ terminating ${}_8 W_7$  trans.}
\put(0,0){\makebox(15,10)[r]{${}_{2n+6} W_{2n+5}$ series}}
\put(80,0){\makebox(15,10)[r]{${}_{2n+6} W_{2n+5}$ series}}
\put(20, 5){\vector(1,0){45}}
\put(30, 7){transposition}
\put(5, 20){\vector(0, -1){10}}
\put(75, 10){\vector(0, 1){10}}
\put(8, 15){\eqref{m1T87T1}}
\put(78, 15){\eqref{m1T87T1}}
\end{picture}
\end{center}

\begin{prop}
\begin{eqnarray}\label{ltAnTerm8W7-1}
&&
\quad 
\sum_{\gamma \in {\Bbb N}^{n}}
x_1^{\gamma_1} \cdots x_n^{\gamma_n}  
\left( \frac{a^2 q^{N+2}}{B c f g} \right)^{|\gamma|} 
q^{ - e_2 ( \gamma )} \
\frac{\Delta({x} q^{\gamma})}{\Delta({x})}
\prod_{1 \le i \le n}
\frac{1-  a q^{|\gamma| + \gamma_i} x_i}
{1-  a x_i}
\\
&& \quad \quad \quad 
\times
\prod_{1 \le i \le n}
\frac{(a x_i)_{|\gamma|}}
{( ( a q /b_i ) x_i)_{|\gamma|}}
\prod_{1 \le i, j \le n}
\frac{(b_j x_i / x_j)_{\gamma_i}} {(q x_i / x_j)_{\gamma_i}}
\nonumber\\
&& \quad \quad \quad \quad \quad 
\times
\frac{( f, g, q^{-N})_{|\gamma|}}
{(  a q / c )_{|\gamma|}}
\prod_{1 \le i\le n}
\frac{(c x_i)_{\gamma_i}} 
{( a q^{N+1} x_i, ( a q / f ) x_i, (a q/ g) x_i)_{\gamma_i}}
\nonumber\\
&&  
= \
\prod_{ 1 \le i \le n}
\frac{
( a q x_i, (a q / b_i f) x_i, (a q / b_i g) x_i, (a q / f g) x_i)_N}
{( ( a q / b_i) x_i, (a q / f) x_i, (a q / g ) x_i, ( a q / b_i f g) x_i)_N}
\nonumber\\
&&
\quad \quad 
\times
\sum_{\delta \in {\Bbb N}^{n}}
z_1^{\delta_1} \cdots z_n^{\delta_n} 
\left( \frac{q}{c} \right)^{|\delta|} 
q^{ - e_2 ( \delta )} \ 
\frac{\Delta({z} q^{\delta})}{\Delta({z})}
\prod_{1 \le i \le n}
\frac{1-  ( q^{-N-1} B f g / a) q^{|\delta| + \delta_i} z_i }
{1-  (q^{-N-1} B f g / a) z_i}
\nonumber\\
&&
\quad \quad \quad \quad
\times
\prod_{1 \le i \le n}
\frac{( (q^{-N} B f g / a) z_i)_{|\delta|}}
{( ( q^{-N}  f g / a  ) z_i )_{|\delta|}}
\prod_{1 \le i, j \le n}
\frac{(b_j z_i / z_j)_{\delta_i}} {(q z_i / z_j)_{\delta_i}}
\nonumber\\
&& 
\quad \quad \quad \quad \quad \quad
\times
\frac{( f, g, q^{-N})_{|\delta|}}
{( a q / c)_{|\delta|}}
\prod_{1 \le i\le n}
\frac{( (  q^{-N-1} B c f g / a^2 ) z_i )_{\delta_i}} 
{( ( B f g /a ) z_i, ( q^{-N}  B g / a ) z_i, ( q^{-N} B f / a) z_i )_{\delta_i}}
.
\nonumber
\end{eqnarray}
where $z_i = \displaystyle{\frac{e_i}{E} x_i^{-1}}$.
\end{prop}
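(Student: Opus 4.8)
The plan is to realize the diagram by iterating the transformation \eqref{m1T87T1} exactly twice, with a single transposition performed on the intermediate ${}_{2n+6}W_{2n+5}$ series, in the same spirit as the proof of \eqref{AnNT87T1}; the only change is that here both downward legs of the diagram are instances of \eqref{m1T87T1} rather than of \eqref{m1NT87}.

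First I would apply \eqref{m1T87T1}, with its parameters $d$ and $e$ relabeled as $f$ and $g$, to the left-hand side of \eqref{ltAnTerm8W7-1}. Under this relabeling the left-hand side of \eqref{ltAnTerm8W7-1} is exactly the left-hand side of \eqref{m1T87T1}, so the output is a prefactor times the very-well-poised series
\[
{}_{2n+6}W_{2n+5}\bigl[q^{-N}g/f;\ \{(aq/b_i f)x_i\}_{i=1}^{n},\ \{(q^{-N}g/a)x_i^{-1}\}_{i=1}^{n},\ aq/cf,\ g,\ q^{-N};\ q;\ Bc/a\bigr],
\]
which sits at the bottom-left node. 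Since a ${}_{r+3}W_{r+2}$ series is symmetric in its $r$ free numerator parameters, I would then interchange the two $n$-blocks $(aq/b_i f)x_i \leftrightarrow (q^{-N}g/a)x_i^{-1}$ for all $i=1,\dots,n$ simultaneously. This transposition leaves the series invariant but recasts it into the shape of the right-hand side of a second instance of \eqref{m1T87T1}, now with very-well-poised parameter $\hat a = q^{-N-1}Bfg/a$, summation variables $z_i = b_i/(Bx_i)$, and $\hat b_i = b_i$, $\hat c = q^{-N-1}Bcfg/a^2$, $\hat d = f$, $\hat e = g$. A short check gives $\hat a^{2}q^{2+N}/(\hat B\hat c\hat d\hat e)=q/c$ and $\hat B\hat c/\hat a=Bc/a$, so the argument and the very-well-poised-balancing of the new $A_n$ ${}_8 W_7$ series come out as required, and reading the transposed series backwards through \eqref{m1T87T1} produces the series on the right-hand side of \eqref{ltAnTerm8W7-1}.

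Equating the two expressions for the common intermediate series then yields \eqref{ltAnTerm8W7-1}, the overall prefactor being the quotient of the prefactors produced by the two applications of \eqref{m1T87T1}. The main obstacle is precisely this prefactor bookkeeping. The scalar parts of the two prefactors coincide (both equal $(aq/cg,f)_N/(aq/c,f/g)_N$) and cancel; what remains is a product over $i$ in which the factors carrying $x_i^{-1}$ coming from the second application, such as $((q^{-N}f/a)x_i^{-1})_N$ and $((q^{-N}fg/a)x_i^{-1})_N$, must be reversed by the identity $(\alpha)_N=(q^{1-N}/\alpha)_N(-\alpha)^N q^{\binom{N}{2}}$ into $((aq/f)x_i)_N$ and $((aq/fg)x_i)_N$ respectively. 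One then checks that the accompanying sign and $q$-power factors cancel in pairs, leaving precisely $\prod_i (aqx_i,(aq/b_i f)x_i,(aq/b_i g)x_i,(aq/fg)x_i)_N/((aq/b_i)x_i,(aq/f)x_i,(aq/g)x_i,(aq/b_i fg)x_i)_N$. Matching the individual summand factors of the reversed series against the stated form is then routine, requiring only careful tracking of the powers of $q$ in the change of variables above.
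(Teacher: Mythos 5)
Your proposal is correct and is essentially the paper's own proof: the paper establishes \eqref{ltAnTerm8W7-1} precisely by iterating \eqref{m1T87T1} twice, transposing the two $n$-blocks $(aq/b_i f)x_i \leftrightarrow (q^{-N}g/a)x_i^{-1}$ in the intermediate ${}_{2n+6}W_{2n+5}$ series, ``in the same manner as in that of \eqref{AnNT87T1}'' as indicated by its diagram. Your parameter identification $\hat{a}=q^{-N-1}Bfg/a$, $\hat{c}=q^{-N-1}Bcfg/a^2$, $\hat{d}=f$, $\hat{e}=g$, $z_i=b_i/Bx_i$ and the reversal $(\alpha)_N=(q^{1-N}/\alpha)_N(-\alpha)^N q^{\binom{N}{2}}$ for the prefactor are exactly the intended bookkeeping (and they in fact expose minor misprints in the printed right-hand side, e.g.\ the stray $z_i=(e_i/E)x_i^{-1}$, which should read $z_i=(b_i/B)x_i^{-1}$).
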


\medskip
\noindent
{\bf Rectangular version}

\begin{cor}
\begin{eqnarray}\label{itAnTerm8W7-1}
&&
\sum_{\gamma \in {\Bbb N}^{n}}
x_1^{\gamma_1} \cdots x_n^{\gamma_n}  
\left( \frac{a^2 q^{|M|+2}}{b c f g} \right)^{|\gamma|} 
q^{ - e_2 ( \gamma )} \
\frac{\Delta({x} q^{\gamma})}{\Delta({x})}
\prod_{1 \le i \le n}
\frac{1-  a q^{|\gamma| + \gamma_i} x_i}
{1-  a x_i}
\\
&& \quad \quad 
\times
\prod_{1 \le i \le n}
\frac{(a x_i)_{|\gamma|}}
{( a q^{|M|+1 }  x_i)_{|\gamma|}}
\prod_{1 \le i, j \le n}
\frac{(q^{-m_j} x_i / x_j)_{\gamma_i}} {(q x_i / x_j)_{\gamma_i}}
\nonumber\\
&& \quad \quad  \quad \quad 
\times
\frac{( b, f, g)_{|\gamma|}}
{(  a q / c )_{|\gamma|}}
\prod_{1 \le i \le n}
\frac{(c x_i)_{\gamma_i}} 
{( (a q /b) x_i, ( a q / f ) x_i, (a q/ g) x_i)_{\gamma_i}}
\nonumber\\
&=&
\prod_{ 1 \le i \le n}
\frac{
( a q x_i, (a q / b f) x_i, (a q / b g) x_i, (a q / f g) x_i)_{m_i}}
{( ( a q / b) x_i, (a q / f) x_i, (a q / g ) x_i, ( a q / b f g) x_i)_{m_i}}
\nonumber\\
&& \quad \quad 
\times
\sum_{\delta \in {\Bbb N}^{n}}
z_1^{\delta_1} \cdots z_n^{\delta_n} 
\left( \frac{q}{c} \right)^{|\delta|} 
q^{ - e_2 ( \delta )} \ 
\frac{\Delta({z} q^{\delta})}{\Delta({z})}
\prod_{1 \le i \le n}
\frac{1-  ( q^{-|M|-1} b f g / a) q^{|\delta| + \delta_i} z_i }
{1-  (q^{-|M|-1} b f g / a) z_i}
\nonumber\\
&& \quad \quad \quad \quad 
\times
\prod_{1 \le i \le n}
\frac{( (q^{-|M|} b f g / a) z_i)_{|\delta|}}
{( ( q^{-|M|}  f g / a  ) z_i )_{|\delta|}}
\prod_{1 \le i, j \le n}
\frac{( q^{ - m_j} z_i / z_j)_{\delta_i}} {(q z_i / z_j)_{\delta_i}}
\nonumber\\
&& \quad \quad \quad \quad \quad \quad 
\times
\frac{( b, f, g)_{|\delta|}}
{( a q / c)_{|\delta|}}
\prod_{1 \le i\le n}
\frac{( (  q^{-|M|-1} b c f g / a^2 ) z_i )_{\delta_i}} 
{( ( q^{- |M|} f g /a ) z_i, ( q^{-|M|}  b g / a ) z_i, ( q^{-|M|} b f / a) z_i )_{\delta_i}}
.
\nonumber
\end{eqnarray}
where $z_i = \displaystyle{q^{m_i - |M|} x_i^{-1}}$.
\end{cor}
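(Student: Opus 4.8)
The plan is to obtain \eqref{itAnTerm8W7-1} as the \emph{rectangular} specialization of the \emph{triangular} transformation \eqref{ltAnTerm8W7-1}, following the same device that the paper uses to pass from \eqref{ltAnWT1} to \eqref{itAnWT1}. Concretely, I would set $b_i = q^{-m_i}$ for $1 \le i \le n$ in \eqref{ltAnTerm8W7-1}. Under this specialization $B = q^{-|M|}$; the doubly indexed factor $\prod_{i,j}(b_j x_i/x_j)_{\gamma_i}$ becomes $\prod_{i,j}(q^{-m_j}x_i/x_j)_{\gamma_i}$, which forces the $\gamma$-summation to terminate rectangularly (each $\gamma_i \le m_i$) rather than along the diagonal $|\gamma|\le N$; the argument $a^2 q^{N+2}/(Bcfg)$ turns into $a^2 q^{|M|+2}/(bcfg)$ once $q^{-N}=b$ is imposed, the very-well-poised base $q^{-N-1}Bfg/a$ becomes $q^{-|M|-1}bfg/a$, and the auxiliary variables $z_i$ reduce to those displayed in \eqref{itAnTerm8W7-1}. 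After this substitution, \eqref{itAnTerm8W7-1} is exactly the assertion that \eqref{ltAnTerm8W7-1} holds with the single remaining free parameter $b$ taking the value $q^{-N}$ for a nonnegative integer $N$.

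The second step is the standard analytic-continuation argument in $b$. First I would rewrite each finite $q$-shifted factorial occurring in the prefactors as a quotient of infinite products by means of \eqref{ShiftFact}, so that both sides become rational functions of $b = q^{-N}$. Clearing denominators then recasts the desired equality as a polynomial identity in $b$. By the previous paragraph this polynomial vanishes at the infinitely many points $b = q^{-N}$, $N \in \Bbb N$; since a growth estimate bounds its degree, a polynomial with infinitely many roots must vanish identically, and hence \eqref{itAnTerm8W7-1} holds for arbitrary $b$. This is precisely the polynomial-interpolation device spelled out in the proof of \eqref{itAnWT1}, which, as remarked there, applies verbatim to every corollary of this section.

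The main difficulty will be the bookkeeping that legitimizes the continuation rather than any new idea. One must check that after $b_i = q^{-m_i}$ \emph{both} multiple series genuinely terminate --- the left as a rectangular sum over $\gamma_i \le m_i$ and the right as a rectangular sum over $\delta_i \le m_i$ through the factor $(q^{-m_j}z_i/z_j)_{\delta_i}$ --- so that every expression is a finite, well-defined rational function of $b$; and one must verify that after clearing denominators the dependence on $b$ is polynomial of controlled degree, so that the infinitely many roots indeed force vanishing. These finiteness and degree estimates are of the same nature as those in Milne and Newcomb \cite{MilNew1}. The remaining verifications --- tracking the substitution $b_i = q^{-m_i}$ together with $q^{-N}=b$ through the prefactors, through the very-well-poised base, and through the auxiliary variables $z_i$ --- are routine, but they must be carried out consistently on both sides to match the stated form of \eqref{itAnTerm8W7-1}.
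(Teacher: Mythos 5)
Your proposal is correct and is precisely the paper's own argument: the paper proves the first rectangular corollary \eqref{itAnWT1} by exactly this device (set $b_i=q^{-m_i}$ in the triangular identity, rewrite the finite $q$-shifted factorials via \eqref{ShiftFact}, observe that the claimed formula holds at $b=q^{-N}$ for every nonnegative integer $N$, clear denominators and conclude from a polynomial identity in $b^{-1}$ having infinitely many roots), and it then states explicitly that all remaining corollaries of Section 4, including \eqref{itAnTerm8W7-1}, follow from their triangular propositions by the same procedure. Your closing caution about bookkeeping is well placed: carrying $b_i=q^{-m_i}$, $B=q^{-|M|}$, $q^{-N}=b$ through \eqref{ltAnTerm8W7-1} actually produces $z_i=q^{|M|-m_i}x_i^{-1}$, the factor $(a q^{1+m_i}x_i)_{|\gamma|}$ and the factor $((b f g/a)z_i)_{|\delta|}$, so a few exponents as printed in \eqref{itAnTerm8W7-1} appear to be typographical slips in the statement rather than obstacles to your argument.
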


\begin{rem}
In the case when $n=1$ and $x_1 = 1$, \eqref{ltAnTerm8W7-1} and \eqref{itAnTerm8W7-1} reduces to the following
terminating ${}_8 W_7$ transformation:
\begin{eqnarray}\label{n1-AnTerm8W7-1}
&&
{}_8 W_7
\left[ 
a; b, c, f, g, q^{-N}; q; 
\frac{a^2 q^{2+N}}{ b c f g}
\right]
=
\frac{
( a q, a q / b f, a q / b g, a q / f g)_N}
{( a q / b, a q / f, a q / g,  a q / b f g)_N}
\\
&&
\quad \quad \quad \quad 
{}_8 W_7
\left[
q^{-N-1} b f g / a;
b, q^{-N-1} b c f g / a^2, f, g, q^{-N}; q; 
\frac{q}{c}
\right].
\nonumber
\end{eqnarray}
\eqref{ltAnTerm8W7-1} can also be obtained by taking the limit $d \to \infty$ in \eqref{Masatoshi-san}.
\end{rem}

\begin{rem}
$A_n$ Watson transformation \eqref{ltAnWT3} can be obtained by combining \eqref{ltAnTerm8W7-1} and 
\eqref{ltAnWT4} and by combining \eqref{ltAnWT4} and $A_n$ Sears transformations formula \eqref{ltAnST3}.
\begin{center}
\setlength{\unitlength}{1mm}
\begin{picture}(100, 30)
\put(0,20){\makebox(15,10)[r]{${}_8 W_7$ series in $A_n$}}
\put(80,20){\makebox(15,10)[r]{${}_8 W_7$ series in $A_n$}}
\put(20, 25){\vector(1,0){45}}
\put(37, 27){\eqref{ltAnTerm8W7-1}}
\put(0,0){\makebox(15,10)[r]{ ${}_4 \phi_3$ series in $A_n$}}
\put(80,0){\makebox(15,10)[r]{${}_{4} \phi_3$ series in $A_n$ }}
\put(20, 5){\vector(1,0){45}}
\put(37, 7){\eqref{ltAnST3}}
\put(5, 20){\vector(0, -1){10}}
\put(75, 20){\vector(0, -1){10}}
\put(6, 15){\eqref{ltAnWT4}}
\put(77, 15){\eqref{ltAnWT4}}
\put(23, 20){\vector(4,-1){40}}
\put(50, 15){\eqref{ltAnWT3}}
\end{picture}
\end{center}

\end{rem}

As we have seen in this section, our discussion here may implies not only that our class of multiple 
hypergeometric transformations in the previous section are broader class than Milne`s class of  transformation in $A_n$
but also that our class contains more precise informations. In our terminology, one may see that
Milne`s hypergeometric transformations have extra hidden symmetries in the one dimensional (generalized)
hypergeometric series.

\vspace{15mm}

\noindent
{\bf {\large Acknowledgments}}
\\
I would like to express my sincere thanks to Professors Etsuro Date and Masatoshi Noumi for 
their encouragements.
 
}


\vspace{25mm}



\end{document}